\newtheorem{theorem}{Theorem}[section]
\newtheorem{lemma}[theorem]{Lemma}
\newtheorem{corollary}[theorem]{Corollary}
\theoremstyle{definition}
\newtheorem{definition}[theorem]{Definition}
\newtheorem{example}[theorem]{Example}
\newtheorem{question}[theorem]{Question}
\theoremstyle{remark}
\newtheorem{remark}[theorem]{Remark}
\begin{document}

\setcounter{page}{1}

\title{On $e^*$-$\theta$-$D$-sets and Related Topics}

\author{D. AKALIN$^{\rm a}$, M. Özkoç$^{\rm b,\ast}$}

\address{$^{A}$Muğla Sıtkı Koçman University, Graduate School of Natural and Applied Sciences, Mathematics, 48000, Menteşe-Muğla, Turkey.}

\email{dilekkakaln23@gmail.com}

\address{$^{B}$Muğla Sıtkı Koçman University, Faculty of Science, Department of Mathematics, 48000, Menteşe-Muğla, Turkey.}

\email{murad.ozkoc@mu.edu.tr}


\subjclass[2010]{Primary 54C08, 54C10; Secondary 54C05}

\keywords{$e^*$-open set, $e^*\text{-}\theta$-open set, $e^*\text{-}\theta$-closed set, $e^*\text{-}\theta$-closure.}

\date{--- $^{*}$Corresponding author}

\begin{abstract}

This paper aims to study the notion of $e^*\text{-}\theta$-open \cite{Ozkoc1} sets and to investigate new properties of this notion. Also, we define a new type of set, called $e^*\text{-}\theta\text{-}D$-set, via the notion of $e^*\text{-}\theta$-open set. Moreover, we introduce some new separation axioms by utilizing $e^*\text{-}\theta\text{-}D$-sets. We obtain many results related to these new notions. In addition, the notions of $e^*\text{-}\theta\text{-}$kernel and slightly $e^*\text{-}\theta\text{-}R_0$ space are defined. Some characterizations regarding these new notions have been obtained. Furthermore, we have given many examples concerning the mentioned notions. Finally, we not only put forward the definition of $e^*\text{-}R_1$ space but also obtained some of its characterizations.

\end{abstract}
\maketitle

\section[Introduction]{Introduction}


The concept of open set is very important for general topology. The broad perspective offered to mathematicians by this fundamental concept in topology facilitates the organization of elements and the study of topological spaces. This richness of the open set concept adds depth to mathematical thought and contributes significantly to the understanding of topological spaces. 
As we go deeper into topology, the diversity of open sets increases. This diversity provides a mathematical basis for separating different points and set structures with different neighborhoods. 
At this stage, it is important to point out that separation axioms play a decisive role. Today, various special and general forms of separation axioms have been defined and studied in detail by many mathematicians. Moreover, special and general forms of open sets have also gained an important place in the mathematical literature. 
 For instance, in 1966, Velicko introduced the notion of $\theta$-open \cite{Velicko} set which is the stronger form of open sets in topology. After him, several new forms of $\theta$-open \cite{Velicko} classes such as pre-$\theta$-open \cite{Noiri}, semi-$\theta$-open \cite{Jafari}, $\beta$-$\theta$-open \cite{Caldas,Popa}, and $e^*\text{-}\theta$-open \cite{Ozkoc1} were defined and studied in the literature. In the first part of this paper, we present an independent notion of $\beta\text{-}\theta\text{-}$open \cite{Caldas,Popa} set, which was introduced in 2015 by Farhan et. al.
The literature on $e^*\text{-}\theta$-open and $e^*\text{-}\theta$-closed sets defined by Yang
some new results are obtained that do not hold. In the second section, the $e^* \text{-}\theta\text{-}D$-set and quasi $e^* \text{-}\theta\text{-}D$-set defined in this paper are analyzed.
The concepts of $e^*\text{-}\theta \text{-}D_0$ space, $e^*\text{-}\theta \text{-}D_1$ space, $e^*\text{-}\theta \text{-}D_2$ space are introduced through the concepts of $\theta$-closed set and
the relations between these concepts are investigated.
In the third section, the concepts of $e^*\text{-}\theta\text{-}R_{0}$ space and $e^*\text{-}\theta$-core are partially defined and the relations between these two concepts are analyzed.
In the fourth section, the notions of $S$-continuous, $\theta\text{-}S\text{-}e^*$-continuous and $S\text{-}e^*$-continuous functions are defined and some basic results on these functions are presented. In the last section, the definition of $e^*\text{-}R_{1}$ space is given and two characterizations of this concept has been obtained.\\

\section[Preliminaries]{Preliminaries}
Throughout this paper, $X$ and $Y$ refer always to topological spaces on which no
separation axioms are assumed unless otherwise mentioned. For a subset $A$ of $X,$
$cl(A)$ and $int(A)$ denote the closure of $A$ and the interior of $A$ in $X,$ respectively. The family of
all open subsets containing $x$ of $X$ is denoted by $O(X,x).$ A subset $A$ is said to be regular open \cite{Stone} (resp. regular closed \cite{Stone}) if $A = int(cl(A))$ (resp. $A = cl(int(A))$). The $\delta \text{-}$interior \cite{Velicko} of a subset $A$ of $X$ is the union of all regular open sets of $X$ contained in $A$ and is denoted by $\delta\text{-}int(A).$ The subset $A$ of a space $X$ is called $\delta \text{-}$open \cite{Velicko} if $A =\delta\text{-}int(A),$ i.e., a set is $\delta\text{-}$open if it is the union of some regular open sets. The complement of a $\delta\text{-}$open set is called $\delta \text{-}$closed. Alternatively, a subset $A$ of a space $X$ is called $\delta$-closed \cite{Velicko} if $A =\delta\text{-}cl(A),$ where $\delta\text{-}cl(A) = \{x\in X|(\forall U\in O(X,x))(int(cl(U))\cap A \neq \emptyset)\}.$ The family of all $\delta\text{-}$open (resp. $\delta\text{-}$closed) sets in $X$ is denoted by $\delta O(X)$ (resp. $\delta C(X)$).
\\

$A$ subset $A$ of $X$ is said to be $e^*\text{-}$open  \cite{Ekici2} if $A \subseteq cl(int(\delta \text{-}cl(A))).$ The complement
of an $e^*\text{-}$open set is called $e^*\text{-}$closed. A subset $A$ of a space $X$ is said to be $e^*$-regular  \cite{Ozkoc1} if it is both $e^*$-open and $e^*$-closed. The $e^*\text{-}$interior \cite{Ekici2} of a subset $A$ of $X$ is the union of all $e^*\text{-}$open sets of $X$ contained in $A$ and is denoted by $e^*\text{-}int(A).$ The $e^*$-closure \cite{Ekici2} of a subset $A$ of $X$ is the intersection of all $e^*$-closed sets of $X$ containing $A$ and is denoted by $e^*\text{-}cl(A).$ 
The family of all $e^*$-open  (resp. $e^*$-closed, $e^*$-regular) subsets containing $x$ of $X$ is denoted by $e^*O(X,x)$ (resp. $e^*C(X,x),$ $e^*R(X,x)).$ 
\\

A point $x$ of $X$ is called an $e^*\text{-}\theta$-cluster  \cite{Ozkoc1} point of $A \subseteq X$ if $e^*\text{-}cl(U) \cap A = \emptyset$ for
every $U \in e^*O(X, x).$ The set of all $e^*\text{-}\theta\text{-}$cluster points of $A$ is called the $e^*\text{-}\theta\text{-}$closure
of $A$ and is denoted by $e^*\text{-}cl_{\theta}(A).$ A subset $A$ is said to be $e^*\text{-}\theta\text{-}$closed if and only if $A=e^*\text{-}cl_{\theta}(A).$ The complement of an $e^*\text{-}\theta\text{-}$closed set is said to be $e^*\text{-}\theta\text{-}$open. The family of
all $e^*\text{-}\theta$-closed (resp. $e^*\text{-}\theta$-open) subsets of $X$ is denoted by $e^*\theta C(X)$ (resp. $e^*\theta O(X)).$ The family of
all $e^*\text{-}\theta$-closed (resp. $e^*\text{-}\theta$-open) subsets containing $x$ of $X$ is denoted by $e^*\theta C(X,x)$ (resp. $e^*\theta O(X,x)).$ Also, the family of all $e^*\text{-}\theta $-open sets containing the subset $F$ of $X$ will be denoted by $e^*\theta O(X,F).$
\\
\begin{theorem} $($\cite{Ozkoc1}$)$ The following properties hold for a subset $A$ of a topological space $(X , \tau):$ \\
$(a)$ $A\in e^*O(X)$ \text{if and only if }$ e^*\text{-}cl(A)\in e^*R(X),$\\
$(b)$ $A\in e^*C(X)$ \text{if and only if }$ e^*\text{-}int(A)\in e^*R(X).$
\end{theorem}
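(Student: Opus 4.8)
The plan is to establish (a) directly from the defining inclusion $A\subseteq cl(int(\delta\text{-}cl(A)))$ for $e^*$-openness and then obtain (b) by complementation. Throughout I would use three elementary facts. First, every open set is $e^*$-open (since $U=int(U)\subseteq int(\delta\text{-}cl(U))\subseteq cl(int(\delta\text{-}cl(U)))$ for open $U$), so every closed set is $e^*$-closed; in particular $cl(A)\subseteq\delta\text{-}cl(A)$, and the closed set $cl(int(\delta\text{-}cl(A)))$ is always $e^*$-closed. Second, an arbitrary union of $e^*$-open sets is again $e^*$-open, whence $e^*\text{-}cl(A)$ is itself $e^*$-closed for every $A$; thus to prove $e^*\text{-}cl(A)\in e^*R(X)$ it suffices to prove $e^*\text{-}cl(A)\in e^*O(X)$. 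Third, $cl$, $int$ and $\delta\text{-}cl$ are monotone and $\delta\text{-}cl$ is idempotent.

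For the forward implication of (a), assume $A\in e^*O(X)$. Then $A\subseteq cl(int(\delta\text{-}cl(A)))$, and since the right-hand side is $e^*$-closed and contains $A$, we get $e^*\text{-}cl(A)\subseteq cl(int(\delta\text{-}cl(A)))$. On the other hand $A\subseteq e^*\text{-}cl(A)$, so monotonicity gives $cl(int(\delta\text{-}cl(A)))\subseteq cl(int(\delta\text{-}cl(e^*\text{-}cl(A))))$. Chaining the two inclusions yields $e^*\text{-}cl(A)\subseteq cl(int(\delta\text{-}cl(e^*\text{-}cl(A))))$, i.e. $e^*\text{-}cl(A)\in e^*O(X)$; together with the fact that $e^*\text{-}cl(A)$ is always $e^*$-closed, this shows $e^*\text{-}cl(A)\in e^*R(X)$.

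For the converse the key observation is that $\delta\text{-}cl(e^*\text{-}cl(A))=\delta\text{-}cl(A)$. Indeed, $cl(A)$ is closed, hence $e^*$-closed, and contains $A$, so $e^*\text{-}cl(A)\subseteq cl(A)\subseteq\delta\text{-}cl(A)$; applying $\delta\text{-}cl$ and using idempotence and monotonicity gives $\delta\text{-}cl(e^*\text{-}cl(A))\subseteq\delta\text{-}cl(A)$, while the reverse inclusion is immediate from $A\subseteq e^*\text{-}cl(A)$. Now suppose $e^*\text{-}cl(A)\in e^*R(X)$, so in particular $e^*\text{-}cl(A)\in e^*O(X)$ and hence $e^*\text{-}cl(A)\subseteq cl(int(\delta\text{-}cl(e^*\text{-}cl(A))))=cl(int(\delta\text{-}cl(A)))$ by the observation. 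Since $A\subseteq e^*\text{-}cl(A)$, this gives $A\subseteq cl(int(\delta\text{-}cl(A)))$, i.e. $A\in e^*O(X)$.

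Finally, (b) should follow from (a) by duality: passing to complements, $A\in e^*C(X)$ iff $X\setminus A\in e^*O(X)$, which by (a) holds iff $e^*\text{-}cl(X\setminus A)\in e^*R(X)$; since $e^*R(X)$ is closed under complementation and $e^*\text{-}int(A)=X\setminus e^*\text{-}cl(X\setminus A)$, this is equivalent to $e^*\text{-}int(A)\in e^*R(X)$. I expect the main obstacle to be the converse of (a): one must recognize that replacing $A$ by its (possibly much larger) $e^*$-closure leaves the $\delta$-closure unchanged, and the squeeze $e^*\text{-}cl(A)\subseteq cl(A)\subseteq\delta\text{-}cl(A)$ is exactly what makes this work.
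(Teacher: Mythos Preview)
Your argument is correct. The forward direction of (a) is clean: the closed set $cl(int(\delta\text{-}cl(A)))$ is $e^*$-closed and squeezes $e^*\text{-}cl(A)$ from above, while monotonicity pushes the $e^*$-open inclusion up to $e^*\text{-}cl(A)$. For the converse the identity $\delta\text{-}cl(e^*\text{-}cl(A))=\delta\text{-}cl(A)$, obtained via the chain $e^*\text{-}cl(A)\subseteq cl(A)\subseteq\delta\text{-}cl(A)$ and idempotence of $\delta\text{-}cl$, is exactly the right tool, and your deduction of $A\in e^*O(X)$ from it is valid. The duality argument for (b) is also fine.

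There is no proof in the paper to compare against: this theorem appears in the Preliminaries section and is quoted verbatim from \cite{Ozkoc1} without argument. So your write-up stands on its own as an independent verification of a cited result rather than a reconstruction of anything the present authors did.
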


\begin{corollary}\label{5}     $($\cite{Ozkoc1}$)$
	Let $A$ and $A_{\alpha}(\alpha \in \Lambda)$ be any subsets of a space $(X, \tau ).$ Then the following properties hold:\\
	$(a)$ $A$ is $e^*\text{-}\theta\text{-}$open in $X$ if and only if for each $x \in A$ there exists $U \in e^*R(X,x)$ such that $x \in U \subseteq A,$\\
	$(b)$ If $A_{\alpha}$ is  $e^*\text{-} \theta \text{-}$open in $X$ for each $\alpha \in A,$ then $\cup_{\alpha \in \Lambda}A_{\alpha}$ is  $e^*\text{-} \theta \text{-}$open in $X.$
	\end{corollary}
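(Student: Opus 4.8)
The plan is to prove $(a)$ first and then obtain $(b)$ as an immediate consequence. For $(a)$, I would start by passing to complements: by definition $A$ is $e^*\text{-}\theta$-open exactly when $X \setminus A$ is $e^*\text{-}\theta$-closed, i.e. $X\setminus A = e^*\text{-}cl_{\theta}(X\setminus A)$. Since the inclusion $X\setminus A \subseteq e^*\text{-}cl_{\theta}(X\setminus A)$ holds trivially, this equality reduces to $e^*\text{-}cl_{\theta}(X\setminus A) \subseteq X\setminus A$, which says precisely that no point of $A$ is an $e^*\text{-}\theta$-cluster point of $X\setminus A$. Unwinding the definition of $e^*\text{-}\theta$-cluster point, this in turn means: for every $x \in A$ there exists $U \in e^*O(X,x)$ with $e^*\text{-}cl(U)\cap(X\setminus A)=\emptyset$, equivalently $e^*\text{-}cl(U)\subseteq A$. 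Thus the whole of $(a)$ rests on showing that the condition ``there exists $U \in e^*O(X,x)$ with $e^*\text{-}cl(U)\subseteq A$'' is equivalent to ``there exists $V \in e^*R(X,x)$ with $x\in V\subseteq A$''.

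For the forward implication of this equivalence I would invoke Theorem 2.1$(a)$: given such a $U$, since $U \in e^*O(X)$ we have $e^*\text{-}cl(U)\in e^*R(X)$, and clearly $x\in U\subseteq e^*\text{-}cl(U)\subseteq A$, so $V := e^*\text{-}cl(U)$ is the desired $e^*$-regular set containing $x$. For the converse, given $V \in e^*R(X,x)$ with $x\in V\subseteq A$, I would use that an $e^*$-regular set is by definition both $e^*$-open and $e^*$-closed: being $e^*$-open gives $V \in e^*O(X,x)$, and being $e^*$-closed gives $e^*\text{-}cl(V)=V\subseteq A$, so $U := V$ works. This closes the equivalence and hence $(a)$.

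Part $(b)$ then follows directly from the neighborhood characterization in $(a)$. I would take any $x \in \cup_{\alpha\in\Lambda}A_{\alpha}$, pick an index $\alpha_{0}$ with $x\in A_{\alpha_{0}}$, and apply $(a)$ to the $e^*\text{-}\theta$-open set $A_{\alpha_{0}}$ to obtain $U \in e^*R(X,x)$ with $x\in U\subseteq A_{\alpha_{0}}\subseteq \cup_{\alpha\in\Lambda}A_{\alpha}$. Since such a $U$ exists for every point of the union, the backward direction of $(a)$ shows that $\cup_{\alpha\in\Lambda}A_{\alpha}$ is $e^*\text{-}\theta$-open.

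The main obstacle is the bridge in $(a)$ between the intrinsic $e^*\text{-}\theta$-closure formulation and the external ``$e^*$-regular neighborhood'' formulation; everything hinges on Theorem 2.1$(a)$, which is exactly what converts the $e^*$-closure $e^*\text{-}cl(U)$ of an $e^*$-open set into a genuinely $e^*$-regular set, thereby allowing one to replace the weaker condition $e^*\text{-}cl(U)\subseteq A$ by membership of an $e^*$-regular subset of $A$. The remaining steps (complementation, unwinding the cluster-point definition, and the union argument) are routine once this identification is in place.
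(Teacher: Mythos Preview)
Your argument is correct. The paper does not supply its own proof of this corollary; it is quoted from \cite{Ozkoc1} and stated without proof, so there is no in-paper argument to compare against. Your approach---complementing, unwinding the $e^*\text{-}\theta$-cluster-point definition to get an $e^*$-open $U$ with $e^*\text{-}cl(U)\subseteq A$, then using Theorem~2.1(a) to upgrade $e^*\text{-}cl(U)$ to an $e^*$-regular set---is exactly the natural route and matches how such characterizations are typically derived in this literature. Part~(b) is, as you say, immediate from the local criterion in~(a).
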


	\begin{theorem} \label{1}
$($\cite{Ozkoc1}$)$ For a subset $A$ of a topological space $X ,$ the following
	properties hold:\\
	$(a)$ If $A\in e^*O(X)$, then $e^*\text{-}cl(A)=e^*\text{-}cl_\theta(A),$\\
	$(b)$ $A\in e^*R(X)$ if and only if $A$ is $e^*\text{-}\theta\text{-}open$
	and $e^*\text{-}\theta\text{-}closed.$
	\end{theorem}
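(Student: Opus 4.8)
The plan is to establish (a) first and then bootstrap both implications of (b) from it, with Theorem~2.1(a) (the equivalence $A\in e^*O(X)\iff e^*\text{-}cl(A)\in e^*R(X)$) as the essential input.

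For (a) I would begin by recording the inclusion $e^*\text{-}cl(A)\subseteq e^*\text{-}cl_\theta(A)$, which holds for every subset $A$ and needs no hypothesis: writing $x\in e^*\text{-}cl(A)$ as the statement that every $U\in e^*O(X,x)$ meets $A$, and using $U\subseteq e^*\text{-}cl(U)$, we get $e^*\text{-}cl(U)\cap A\neq\emptyset$ for all such $U$, i.e. $x\in e^*\text{-}cl_\theta(A)$. The real content of (a) is the reverse inclusion under $A\in e^*O(X)$, which I would prove by contraposition.

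So suppose $x\notin e^*\text{-}cl(A)$, equivalently $x\in V:=X\setminus e^*\text{-}cl(A)$. This is the step where the hypothesis is spent: since $A\in e^*O(X)$, Theorem~2.1(a) gives $e^*\text{-}cl(A)\in e^*R(X)$, so $e^*\text{-}cl(A)$ is simultaneously $e^*$-open and $e^*$-closed, and hence its complement $V$ is again both $e^*$-open and $e^*$-closed, i.e. $V\in e^*R(X,x)$. Because $V$ is $e^*$-closed, $e^*\text{-}cl(V)=V$; because $A\subseteq e^*\text{-}cl(A)$, we have $V\cap A=\emptyset$. Thus $V\in e^*O(X,x)$ satisfies $e^*\text{-}cl(V)\cap A=\emptyset$, so $x\notin e^*\text{-}cl_\theta(A)$. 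This yields $e^*\text{-}cl_\theta(A)\subseteq e^*\text{-}cl(A)$ and finishes (a). I expect this to be the main obstacle: the point is to manufacture, out of the mere failure $x\notin e^*\text{-}cl(A)$, a single $e^*$-open neighborhood of $x$ whose $e^*$-closure still misses $A$, and the self-coincidence $e^*\text{-}cl(V)=V$ is exactly what the $e^*$-regularity from Theorem~2.1(a) buys.

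For (b), the forward direction is immediate from (a): if $A\in e^*R(X)$ then $A\in e^*O(X)$, so (a) gives $e^*\text{-}cl_\theta(A)=e^*\text{-}cl(A)$, and $A$ being $e^*$-closed gives $e^*\text{-}cl(A)=A$; hence $A=e^*\text{-}cl_\theta(A)$ is $e^*\text{-}\theta$-closed, and applying the same argument to the (also $e^*$-regular) complement $X\setminus A$ shows $X\setminus A$ is $e^*\text{-}\theta$-closed, i.e. $A$ is $e^*\text{-}\theta$-open. For the converse, assume $A$ is both $e^*\text{-}\theta$-open and $e^*\text{-}\theta$-closed. From $e^*\text{-}\theta$-openness, Corollary~\ref{5}(a) supplies for each $x\in A$ a set $U_x\in e^*R(X,x)$ with $x\in U_x\subseteq A$, so $A=\bigcup_{x\in A}U_x$ is a union of $e^*$-open sets and therefore $A\in e^*O(X)$ by the union-stability of $e^*$-open sets. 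Now (a) applies, and the $e^*\text{-}\theta$-closedness $A=e^*\text{-}cl_\theta(A)=e^*\text{-}cl(A)$ forces $A$ to be $e^*$-closed. Being both $e^*$-open and $e^*$-closed, $A\in e^*R(X)$, as required.
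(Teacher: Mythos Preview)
Your proof is correct. Note, however, that the paper does not supply its own proof of this theorem: it is quoted as a known result from \cite{Ozkoc1} and stated without argument, so there is no in-paper proof to compare against. Your argument is a clean and standard one for results of this type: the inclusion $e^*\text{-}cl(A)\subseteq e^*\text{-}cl_\theta(A)$ is generic, and for the reverse you correctly exploit Theorem~2.1(a) to produce an $e^*$-regular neighborhood $V=X\setminus e^*\text{-}cl(A)$ of any point outside $e^*\text{-}cl(A)$, so that $e^*\text{-}cl(V)=V$ misses $A$. The bootstrapping of (b) from (a) via Corollary~\ref{5}(a) and the union-stability of $e^*$-open sets is also sound. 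One small remark: in the converse of (b) you implicitly use that an arbitrary union of $e^*$-open sets is $e^*$-open; this is true (and routine from the definition $A\subseteq cl(int(\delta\text{-}cl(A)))$ by monotonicity), but since the paper does not state it explicitly you might want to flag it as a one-line observation if you write this up formally.
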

	
	\begin{definition}
	$($\cite{Ozkoc1}$)$
	 $A$ space $X$ is said to be $e^*$-regular if for each closed set $F \subseteq X$ and each point $x\in X \setminus F,$ there exist disjoint $e^*$-open sets $U$ and $V$ such that $x \in U$ and $F \subseteq V.$
	\end{definition}
	\begin{theorem} $($\cite{Ozkoc1}$)$
    For a space $X,$ the following are equivalent: \\
    $(a)$ $X$ is $e^*\text{-}regular;$ \\
    $(b)$ For each point $x\in X$ and for each open set $U$ of $X$ containing $x,$ there exists $V\in e^*O(X)$ such that $x \in V \subseteq e^*\text{-}cl(V) \subseteq U;$ \\
    $(c)$ For each subset $U\in e^*O(X)$ and each  $x\in U,$ there exists $V\in e^*R(X)$ such that $x \in V \subseteq U.$
	\end{theorem}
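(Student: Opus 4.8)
The plan is to prove the cycle $(a)\Rightarrow(b)\Rightarrow(c)\Rightarrow(a)$, disposing of the two ``complementation'' implications first and isolating the passage from open to $e^*$-open neighbourhoods as the real work. Throughout I would use three standing facts: every open set is $e^*$-open (since $A=int(A)\subseteq int(\delta\text{-}cl(A))\subseteq cl(int(\delta\text{-}cl(A)))$), the complement of an $e^*$-open set is $e^*$-closed, and $e^*\text{-}cl(C)=C$ for $e^*$-closed $C$.

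For $(a)\Rightarrow(b)$, I would take $x\in X$ and an open $U\ni x$, apply $e^*$-regularity to the closed set $F=X\setminus U$ and the point $x\notin F$ to obtain disjoint $V,W\in e^*O(X)$ with $x\in V$ and $F\subseteq W$. Since $V\cap W=\emptyset$ forces $V\subseteq X\setminus W$ and $X\setminus W$ is $e^*$-closed, it follows that $e^*\text{-}cl(V)\subseteq X\setminus W\subseteq U$, which together with $x\in V\subseteq e^*\text{-}cl(V)$ is exactly $(b)$. For $(c)\Rightarrow(a)$, given a closed $F$ and $x\notin F$, the set $U=X\setminus F$ is open, hence $e^*$-open, so $(c)$ yields $V\in e^*R(X)$ with $x\in V\subseteq U$; because $V$ is $e^*$-regular, $W=X\setminus V$ is $e^*$-open, and $V,W$ are disjoint $e^*$-open sets separating $x$ from $F$. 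Both implications are routine once the complementation bookkeeping is in place.

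The substance is $(b)\Rightarrow(c)$. First I would reformulate the target: by Corollary \ref{5}$(a)$ together with the result that $e^*\text{-}cl(A)\in e^*R(X)$ whenever $A\in e^*O(X)$, condition $(c)$ is equivalent to the assertion that for every $U\in e^*O(X)$ and $x\in U$ there is $W\in e^*O(X,x)$ with $e^*\text{-}cl(W)\subseteq U$ (for then $V:=e^*\text{-}cl(W)$ is the desired $e^*$-regular set). Written this way, $(c)$ is formally identical to $(b)$ except that the ambient neighbourhood $U$ is only required to be $e^*$-open rather than open; equivalently, $(c)$ says that every $e^*$-open set is $e^*\text{-}\theta$-open. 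To cross this gap I would use $e^*$-openness to write $x\in U\subseteq cl(int(\delta\text{-}cl(U)))$, apply $(b)$ to a genuinely open set built from $int(\delta\text{-}cl(U))$, and then cut the resulting $e^*$-closure back inside $U$, invoking Theorem \ref{1}$(a)$ to pass freely between $e^*\text{-}cl$ and $e^*\text{-}cl_{\theta}$ on $e^*$-open sets; keeping the shrunken set $e^*$-open would require a stability lemma for $e^*$-open sets under intersection with regular open sets, which I would have to verify en route.

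The main obstacle is precisely this last step: the definition of $e^*$-regularity only furnishes separation of a point from a genuinely \emph{closed} set, whereas $(c)$ demands control inside an arbitrary $e^*$-open $U$, whose complement is merely $e^*$-closed and may contain no open neighbourhood of $x$ at all (indeed $x$ may lie in $cl(int(\delta\text{-}cl(U)))\setminus int(\delta\text{-}cl(U))$). Thus the delicate point is producing a small $e^*$-regular set that simultaneously captures the boundary point $x$ and stays within the possibly thin set $U$. I expect this to hinge on combining the fact that $e^*\text{-}cl$ of an $e^*$-open set is $e^*$-regular (hence $e^*\text{-}\theta$-clopen by Theorem \ref{1}$(b)$) with a careful intersection argument, and it is exactly here that any essential use of the $e^*$-regularity hypothesis must be concentrated.
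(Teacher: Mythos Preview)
This theorem is stated in the paper's Preliminaries section as a result quoted from \cite{Ozkoc1}; the present paper gives no proof of it, so there is nothing here to compare your argument against.

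On the merits of your proposal itself: your handling of $(a)\Rightarrow(b)$ and $(c)\Rightarrow(a)$ is correct and standard. You are also right that $(b)\Rightarrow(c)$ carries the weight, and you have identified the genuine difficulty precisely: condition $(b)$ only shrinks inside \emph{open} neighbourhoods, whereas $(c)$ asks for $e^*$-regular shrinking inside an arbitrary $e^*$-open $U$, whose complement is merely $e^*$-closed. However, your proposal does not actually resolve this---as you yourself acknowledge. The route you sketch (pass to $int(\delta\text{-}cl(U))$, apply $(b)$ there, then cut back into $U$) has two visible holes: a point $x\in U$ need not lie in $int(\delta\text{-}cl(U))$ at all, and even when it does, the $e^*$-regular set produced by $(b)$ sits inside $int(\delta\text{-}cl(U))$, which is generally not contained in $U$. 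The ``stability lemma'' you would need (that intersecting with a regular open set keeps you $e^*$-open and inside $U$) is not established and is not obviously true. As written, then, the proof is incomplete at exactly the step you flag; to finish you would need either an additional structural lemma or the argument from the original source \cite{Ozkoc1}.
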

	\begin{definition} $($\cite{Ekici1}$)$
	  $A$ function $f: X \to Y$ is said to be $e^*\text{-}$irresolute if $f^{-1}[V]\in e^*O(X)$ for every $V\in e^*O(Y).$
	\end{definition}
	\begin{theorem} \label{3} $($\cite{ozkocatasever}$)$
    Let $f : X \rightarrow Y$ be a function. The following properties are equivalent:\\
    $(a)$ $f$ is weakly $e^*\text{-}$irresolute;\\
    $(b)$ $f[e^*\text{-}cl(A)] \subseteq e^*\text{-}cl_{\theta}(f[A])$ for every subset $A$ of $X,$\\
    $(c)$
    $f^{-1}[V]$ is $e^*\text{-}\theta \text{-} open$ in $X$ for every $e^*\text{-}\theta \text{-} open$ $V$ of $Y.$
	\end{theorem}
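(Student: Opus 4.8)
The plan is to prove the cycle $(a)\Rightarrow(b)\Rightarrow(c)\Rightarrow(a)$, working from the definition of weak $e^*$-irresoluteness of \cite{ozkocatasever}: $f$ is weakly $e^*$-irresolute when, for every $x\in X$ and every $V\in e^*O(Y,f(x))$, there is $U\in e^*O(X,x)$ with $f[U]\subseteq e^*\text{-}cl(V)$. Three facts will do most of the work: the neighbourhood description $x\in e^*\text{-}cl(A)\Leftrightarrow(\forall U\in e^*O(X,x))(U\cap A\neq\emptyset)$, immediate from the definition of $e^*\text{-}cl$; the equivalence $A\in e^*O(X)\Leftrightarrow e^*\text{-}cl(A)\in e^*R(X)$ (Theorem 2.1(a)), which lets me turn any $e^*$-open set into an $e^*$-regular one by taking its $e^*$-closure; and the pairing of Theorem \ref{1}(b) with Corollary \ref{5}(a), i.e. that $e^*$-regular sets are exactly those simultaneously $e^*\text{-}\theta$-open and $e^*\text{-}\theta$-closed, and that a set is $e^*\text{-}\theta$-open iff each of its points admits an $e^*$-regular neighbourhood inside it. I shall also use the routine fact that the complement of an $e^*$-regular set is $e^*$-regular.

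For $(a)\Rightarrow(b)$, I fix $A\subseteq X$ and $x\in e^*\text{-}cl(A)$ and must certify $f(x)\in e^*\text{-}cl_\theta(f[A])$, that is, $e^*\text{-}cl(V)\cap f[A]\neq\emptyset$ for each $V\in e^*O(Y,f(x))$. Weak $e^*$-irresoluteness yields $U\in e^*O(X,x)$ with $f[U]\subseteq e^*\text{-}cl(V)$; since $x\in e^*\text{-}cl(A)$ the set $U$ meets $A$, and the image of any common point lands in $e^*\text{-}cl(V)\cap f[A]$. Symmetrically, for $(c)\Rightarrow(a)$ I start from $x\in X$ and $V\in e^*O(Y,f(x))$: by Theorem 2.1(a) the set $e^*\text{-}cl(V)$ is $e^*$-regular, hence $e^*\text{-}\theta$-open by Theorem \ref{1}(b), so $(c)$ makes $f^{-1}[e^*\text{-}cl(V)]$ an $e^*\text{-}\theta$-open set containing $x$; Corollary \ref{5}(a) then furnishes $U\in e^*R(X,x)\subseteq e^*O(X,x)$ with $U\subseteq f^{-1}[e^*\text{-}cl(V)]$, which is precisely $f[U]\subseteq e^*\text{-}cl(V)$.

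The substantive step is $(b)\Rightarrow(c)$. Given $V\in e^*\theta O(Y)$ and $x\in f^{-1}[V]$, Corollary \ref{5}(a) provides $W\in e^*R(Y,f(x))$ with $f(x)\in W\subseteq V$; then $Y\setminus W$ is $e^*$-regular and so $e^*\text{-}\theta$-closed, giving $e^*\text{-}cl_\theta(Y\setminus W)=Y\setminus W$. Applying $(b)$ to $A:=f^{-1}[Y\setminus W]$ shows that $x\in e^*\text{-}cl(A)$ would force $f(x)\in Y\setminus W$, a contradiction; hence $x\notin e^*\text{-}cl(A)$ and some $U\in e^*O(X,x)$ satisfies $f[U]\subseteq W$. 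The crucial move is now to replace $U$ by $e^*\text{-}cl(U)$: this is $e^*$-regular by Theorem 2.1(a), and a \emph{second} use of $(b)$, together with the $e^*\text{-}\theta$-closedness of $W$, gives $f[e^*\text{-}cl(U)]\subseteq e^*\text{-}cl_\theta(f[U])\subseteq W\subseteq V$. Thus $e^*\text{-}cl(U)$ is an $e^*$-regular neighbourhood of $x$ contained in $f^{-1}[V]$, and Corollary \ref{5}(a) concludes that $f^{-1}[V]\in e^*\theta O(X)$.

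I expect $(b)\Rightarrow(c)$ to be the only delicate point. The naive reading of $(b)$ alone—applying it to $A=f^{-1}[F]$ for $F$ an $e^*\text{-}\theta$-closed set—delivers just that $f^{-1}[F]$ is $e^*$-closed, i.e. that preimages of $e^*\text{-}\theta$-open sets are merely $e^*$-open, which is genuinely weaker than the $e^*\text{-}\theta$-openness demanded in $(c)$. The closure-upgrade $U\rightsquigarrow e^*\text{-}cl(U)$ is exactly what repairs this deficit, and its success hinges on the interplay of Theorem 2.1(a) (producing $e^*$-regularity) with Theorem \ref{1}(b) and Corollary \ref{5}(a) (converting $e^*$-regularity into an $e^*\text{-}\theta$-open witness). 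Everything else is bookkeeping with the neighbourhood form of $e^*\text{-}cl$ and with complements of $e^*$-regular sets.
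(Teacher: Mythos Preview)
The paper does not supply a proof of this theorem: it is quoted as a known result from \cite{ozkocatasever} in the preliminaries section, so there is nothing to compare your argument against line by line. That said, your cycle $(a)\Rightarrow(b)\Rightarrow(c)\Rightarrow(a)$ is correct. The implications $(a)\Rightarrow(b)$ and $(c)\Rightarrow(a)$ are routine and match the standard pattern for such equivalences. Your handling of $(b)\Rightarrow(c)$ is the right one: the first application of $(b)$ to $A=f^{-1}[Y\setminus W]$ produces an $e^*$-open neighbourhood $U$ of $x$ mapping into $W$, and the second application of $(b)$ to $U$ itself, combined with Theorem~2.1(a) and the $e^*\text{-}\theta$-closedness of the $e^*$-regular set $W$ (Theorem~\ref{1}(b)), upgrades this to an $e^*$-regular neighbourhood $e^*\text{-}cl(U)\subseteq f^{-1}[V]$, after which Corollary~\ref{5}(a) finishes. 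Your diagnosis that a single use of $(b)$ only yields $e^*$-closedness of $f^{-1}[F]$, and that the closure-upgrade is genuinely needed, is accurate.
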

	\begin{remark}$($\cite{Ozkoc1}$)$
	It can be easily shown that $e^*\text{-}$regular $\Rightarrow$ $e^*\text{-}\theta$-open $\Rightarrow$ $e^*\text{-}$open. 
	\end{remark}
	\begin{theorem}\label{kapanis} $($\cite{Ozkoc1}$)$
	For each subset $A$ of a topological space $(X,\tau),$ we have: 
$$\begin{array}{rcl} e^*\text{-}cl_{\theta}(A) & =  &\bigcap\{V|A\subseteq V \text{ and } V \text{ is } e^*\text{-}\theta\text{-}closed\} \\
	 &= &\bigcap \{V|A\subseteq V \text{ and } V\in e^*R(X)\}.
	  \end{array}$$
	\end{theorem}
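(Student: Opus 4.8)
The plan is to establish both asserted equalities at once by proving a cyclic chain of three inclusions, thereby sidestepping any need to verify directly that $e^*\text{-}cl_{\theta}$ is idempotent (which is what a naive ``$\supseteq$'' argument for the first equality would otherwise demand). Write $P = e^*\text{-}cl_{\theta}(A)$, let $Q$ denote the intersection of all $e^*\text{-}\theta$-closed sets containing $A$, and let $R$ denote the intersection of all members of $e^*R(X)$ containing $A$. I would show $P \subseteq Q$, then $Q \subseteq R$, then $R \subseteq P$; since each of the three sets is then sandwiched between the other two, all three must coincide, which is exactly the claim.

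For $P \subseteq Q$, I would first record that $e^*\text{-}cl_{\theta}$ is monotone: if $A \subseteq B$, then any $U \in e^*O(X,x)$ with $e^*\text{-}cl(U) \cap A \neq \emptyset$ also meets $B$, so $e^*\text{-}cl_{\theta}(A) \subseteq e^*\text{-}cl_{\theta}(B)$. Now fix any $e^*\text{-}\theta$-closed $V$ with $A \subseteq V$; monotonicity gives $P = e^*\text{-}cl_{\theta}(A) \subseteq e^*\text{-}cl_{\theta}(V) = V$, the last equality being precisely the definition of $e^*\text{-}\theta$-closedness. Intersecting over all such $V$ yields $P \subseteq Q$. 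The inclusion $Q \subseteq R$ is then purely set-theoretic: by Theorem~\ref{1}$(b)$ every member of $e^*R(X)$ is $e^*\text{-}\theta$-closed, so the family defining $R$ is a subfamily of the family defining $Q$; a smaller family has a larger intersection, giving $Q \subseteq R$.

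The crux is the remaining inclusion $R \subseteq P$, which I would prove by contraposition. Suppose $x \notin P = e^*\text{-}cl_{\theta}(A)$; then there exists $U \in e^*O(X,x)$ with $e^*\text{-}cl(U) \cap A = \emptyset$. Invoking the first theorem of this section, part $(a)$ (that the $e^*$-closure of an $e^*$-open set is $e^*$-regular), the hypothesis $U \in e^*O(X)$ forces $e^*\text{-}cl(U) \in e^*R(X)$; and since complementation swaps $e^*$-open with $e^*$-closed, the complement of an $e^*$-regular set is again $e^*$-regular, so $W := X \setminus e^*\text{-}cl(U)$ lies in $e^*R(X)$. From $e^*\text{-}cl(U) \cap A = \emptyset$ we obtain $A \subseteq W$, so $W$ is one of the sets in the family defining $R$; yet $x \in U \subseteq e^*\text{-}cl(U)$ shows $x \notin W$, whence $x \notin R$. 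This establishes $R \subseteq P$ and closes the cycle. The only genuinely delicate point is the production of the separating $e^*$-regular set $W$ in this final step: everything hinges on $e^*\text{-}cl(U)$ being $e^*$-regular together with the stability of $e^*$-regularity under complementation, after which the conclusion is immediate.
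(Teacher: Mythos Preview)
Your argument is correct. The cyclic inclusion $P \subseteq Q \subseteq R \subseteq P$ works exactly as you describe: monotonicity of $e^*\text{-}cl_\theta$ together with the defining equation $e^*\text{-}cl_\theta(V)=V$ for $e^*\text{-}\theta$-closed $V$ gives $P\subseteq Q$; the containment $e^*R(X)\subseteq e^*\theta C(X)$ from Theorem~\ref{1}(b) gives $Q\subseteq R$; and in the contrapositive step for $R\subseteq P$ the set $W=X\setminus e^*\text{-}cl(U)$ is indeed $e^*$-regular (Theorem~2.1(a) plus closure of $e^*R(X)$ under complementation), contains $A$, and omits $x$.

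As for comparison with the paper: there is nothing to compare against. Theorem~\ref{kapanis} sits in the Preliminaries section and is quoted from \cite{Ozkoc1} without proof; the present paper merely invokes it as background. Your write-up therefore supplies what the paper omits, and the route you take---producing an $e^*$-regular separating set via $e^*\text{-}cl(U)$ for $U\in e^*O(X)$---is the natural one and matches how such characterizations are typically established in this literature.
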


\begin{remark}
It is easy to prove that: \\
$(a)$ the intersection of an arbitrary collection of 	$e^*\text{-}\theta$-closed sets is $e^*\text{-}\theta$-closed. \\
$(b)$ $X$ and $\emptyset$ are $e^*\text{-}\theta$-closed sets.
\end{remark}
\begin{remark}
The following example shows that the union of any two $e^*\text{-}\theta$-closed sets of $X$ need not be $e^*\text{-}\theta$-closed in $X.$
\end{remark}

\begin{example} (\cite{Ozkoc1})
Let $X=\{1,2,3\}$ and $\tau=\{\emptyset,X,\{1\},\{2\},\{1,2\}\}.$ The subsets $\{1\}$ and $\{2\}$ are $e^*\text{-}\theta$-closed in $(X,\tau)$ but $\{1,2\}$ is not $e^*\text{-}\theta$-closed.
\end{example}

\begin{theorem}\label{185}  $($\cite{Ozkoc1}$)$ For a subset $A$ of a space $X,$ the following are equivalent: \\
$(a)$ $A\in e^*R(X);$\\
$(b)$ $A = e^*\text{-}cl(e^*\text{-}int(A));$\\
$(c)$ $A = e^*\text{-}int(e^*\text{-}cl(A)).$
\end{theorem}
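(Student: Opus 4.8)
The plan is to prove the equivalence of the three conditions characterizing $e^*$-regular sets. Since the statement is an ``if and only if'' among three conditions, I would establish the cycle $(a)\Rightarrow(b)\Rightarrow(c)\Rightarrow(a)$, or prove each of $(a)\Leftrightarrow(b)$ and $(a)\Leftrightarrow(c)$ separately. The key tools available are the definitions of $e^*$-interior and $e^*$-closure, Theorem~1.1 (relating membership in $e^*O(X)$ and $e^*C(X)$ to $e^*$-regularity of the closure and interior), and the basic monotonicity and idempotency properties of the $e^*$-closure and $e^*$-interior operators.

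First I would prove $(a)\Rightarrow(b)$. Assuming $A\in e^*R(X)$, so $A$ is both $e^*$-open and $e^*$-closed, I would use the fact that for an $e^*$-closed set $e^*\text{-}cl(A)=A$, and that for an $e^*$-open set the operator $e^*\text{-}cl(e^*\text{-}int(\cdot))$ simplifies. Specifically, since $A$ is $e^*$-open we have $e^*\text{-}int(A)=A$, whence $e^*\text{-}cl(e^*\text{-}int(A))=e^*\text{-}cl(A)=A$ by $e^*$-closedness. The reverse implication $(b)\Rightarrow(a)$ would follow by showing that $A=e^*\text{-}cl(e^*\text{-}int(A))$ forces $A$ to be $e^*$-closed (being an $e^*$-closure) and $e^*$-open; here I would invoke Theorem~1.1(b), since $e^*\text{-}int(A)\in e^*R(X)$ when $A$ is $e^*$-closed, and the structure of the equation pins down both properties simultaneously.

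The argument for $(a)\Leftrightarrow(c)$ is dual: using that $A$ $e^*$-open gives $e^*\text{-}cl(A)=A$ (via $e^*$-closedness combined with Theorem~1.3(a), which identifies $e^*\text{-}cl$ and $e^*\text{-}cl_\theta$ on $e^*$-open sets), and $A$ $e^*$-closed gives $e^*\text{-}int(A)=A$, so $e^*\text{-}int(e^*\text{-}cl(A))=e^*\text{-}int(A)=A$. For the converse I would apply Theorem~1.1(a) to the $e^*$-open hypothesis encoded in the equation $A=e^*\text{-}int(e^*\text{-}cl(A))$.

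The main obstacle I anticipate is the converse directions, namely deducing full $e^*$-regularity (both openness and closedness) from a single fixed-point equation such as $A=e^*\text{-}cl(e^*\text{-}int(A))$. The subtlety is that $e^*$-open sets need not be closed under intersection, so the operators $e^*\text{-}int$ and $e^*\text{-}cl$ do not behave as cleanly as their classical counterparts; I cannot freely assume, for instance, that $e^*\text{-}int(e^*\text{-}cl(A))$ is automatically $e^*$-open. The clean route is to lean entirely on Theorem~1.1, which is precisely tailored to convert the regularity of an interior or closure into membership statements, rather than attempting to manipulate the operators by hand. Ensuring that each fixed-point equation feeds exactly the right hypothesis into Theorem~1.1 is where the care must go.
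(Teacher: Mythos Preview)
The paper does not supply a proof of this theorem; it is quoted from \cite{Ozkoc1} in the Preliminaries section as a known result, so there is no in-paper argument to compare against.

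Your overall plan is sound, but two points deserve correction. First, in your $(a)\Rightarrow(c)$ paragraph you have the roles reversed: it is $e^*$-closedness of $A$ that gives $e^*\text{-}cl(A)=A$, and $e^*$-openness that gives $e^*\text{-}int(A)=A$, not the other way around; the detour through Theorem~2.3(a) (your ``Theorem~1.3(a)'') is irrelevant here. Second, your anticipated obstacle in the converse directions is illusory. The $e^*$-interior of \emph{any} set is $e^*$-open by definition (a union of $e^*$-open sets), and dually every $e^*$-closure is $e^*$-closed. Hence for $(b)\Rightarrow(a)$ one simply observes that $e^*\text{-}int(A)\in e^*O(X)$ always, so Theorem~2.1(a) gives $e^*\text{-}cl(e^*\text{-}int(A))\in e^*R(X)$ immediately, and this equals $A$ by hypothesis. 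Likewise $(c)\Rightarrow(a)$ follows at once from Theorem~2.1(b) applied to the $e^*$-closed set $e^*\text{-}cl(A)$. No delicate bookkeeping is needed; the fixed-point equations feed Theorem~2.1 directly.
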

\begin{theorem} \label{8}   $($\cite{Ozkoc1}$)$ Let $X$ be a topological space and $A\subseteq X.$ Then, the operator $e^*\text{-}cl$ is idempotent, i.e., 
 $e^*\text{-}cl_{\theta}(e^*\text{-}cl_{\theta}(A)) = e^*\text{-}cl_{\theta}(A).
   $ \end{theorem}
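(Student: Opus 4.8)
The plan is to prove idempotency by establishing that $e^*\text{-}cl_{\theta}(A)$ is itself an $e^*\text{-}\theta\text{-}$closed set, and then invoking the defining property that an $e^*\text{-}\theta\text{-}$closed set coincides with its own $e^*\text{-}\theta\text{-}$closure. Concretely, recall from the definition that a subset $B$ is $e^*\text{-}\theta\text{-}$closed precisely when $B = e^*\text{-}cl_{\theta}(B)$. Hence, once I know that the set $B := e^*\text{-}cl_{\theta}(A)$ lies in $e^*\theta C(X)$, applying this equivalence with this particular $B$ gives directly $e^*\text{-}cl_{\theta}(e^*\text{-}cl_{\theta}(A)) = e^*\text{-}cl_{\theta}(A)$, which is exactly the claim.

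The key step is therefore to verify that $e^*\text{-}cl_{\theta}(A) \in e^*\theta C(X)$. For this I would use the representation furnished by Theorem \ref{kapanis}, namely
\[
e^*\text{-}cl_{\theta}(A) = \bigcap\{V \mid A \subseteq V \text{ and } V \text{ is } e^*\text{-}\theta\text{-}closed\}.
\]
This exhibits $e^*\text{-}cl_{\theta}(A)$ as an intersection of a (possibly arbitrary) family of $e^*\text{-}\theta\text{-}$closed sets. By the earlier remark stating that the intersection of an arbitrary collection of $e^*\text{-}\theta\text{-}$closed sets is again $e^*\text{-}\theta\text{-}$closed, it follows at once that $e^*\text{-}cl_{\theta}(A)$ is $e^*\text{-}\theta\text{-}$closed. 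Combining this with the definitional equivalence above completes the argument.

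As an alternative packaging of the same idea, one could argue by double inclusion. The inclusion $e^*\text{-}cl_{\theta}(A) \subseteq e^*\text{-}cl_{\theta}(e^*\text{-}cl_{\theta}(A))$ is the extensivity of the operator, which follows from Theorem \ref{kapanis} since $A$ is contained in every member of the defining intersection, so $A \subseteq e^*\text{-}cl_{\theta}(A)$, and extensivity applied to $e^*\text{-}cl_{\theta}(A)$ yields the desired containment. The reverse inclusion $e^*\text{-}cl_{\theta}(e^*\text{-}cl_{\theta}(A)) \subseteq e^*\text{-}cl_{\theta}(A)$ is the substantive part, and it is precisely here that the $e^*\text{-}\theta\text{-}$closedness of $e^*\text{-}cl_{\theta}(A)$ is used: a set containing an $e^*\text{-}\theta\text{-}$closed set $B$ in its defining family forces $e^*\text{-}cl_{\theta}(B) \subseteq B$.

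The only genuine obstacle is recognizing that $e^*\text{-}cl_{\theta}(A)$ is $e^*\text{-}\theta\text{-}$closed; everything else is bookkeeping. This fact is \emph{not} automatic for $\theta$-type closure operators in general, because such operators need not be idempotent a priori, and the union-closure failure (illustrated in the preceding example) signals that one must be careful. What rescues the situation is that Theorem \ref{kapanis} realizes the $e^*\text{-}\theta\text{-}$closure as an \emph{intersection}, and intersections of $e^*\text{-}\theta\text{-}$closed sets are well-behaved. Thus the whole proof hinges on coupling the intersection representation with the intersection-stability remark, after which idempotency is immediate.
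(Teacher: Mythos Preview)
Your argument is correct. The paper does not actually supply a proof of this theorem; it is quoted from the reference \cite{Ozkoc1} and stated without proof in the preliminaries, so there is no in-paper argument to compare against. Your route via Theorem~\ref{kapanis} together with the remark that arbitrary intersections of $e^*\text{-}\theta$-closed sets are $e^*\text{-}\theta$-closed is a clean and standard way to obtain the result, and it respects the logical ordering of the preliminaries (both ingredients appear before Theorem~\ref{8}). The only minor caveat is to make sure that, in the original source, Theorem~\ref{kapanis} itself is not derived from idempotency; within the present paper's presentation, however, no circularity arises.
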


\section[On $e^*\text{-}\theta$-open sets]{On $e^*\text{-}\theta$-open sets}      
 \begin{definition}\label{tanım}
$A$ subset $A$ of a topological space $X$ is said to be $\theta \text{-}$complement $e^*\text{-}$open (briefly, $\theta \text{-}c \text{-} e^*\text{-}$open$)$ provided there exists a subset $U$ of $X$ for which $X\setminus A= e^*\text{-}cl_{\theta}(U).$ We call a set $\theta \text{-}$complement $e^*\text{-}$closed if its complement is $\theta \text{-}c \text{-} e^*\text{-}$open. 
 \end{definition} 
\begin{remark} \label{uyarı}
        It should be mentioned that by Lemma \ref{8}, $X\setminus A = e^*\text{-}cl_{\theta}(U)$ is $e^*\text{-}\theta\text{-}$closed and $A$ is $e^*\text{-}\theta\text{-}$open. Therefore, the equivalence of $\theta\text{-}c\text{-}e^*\text{-}$open and $e^*\text{-}\theta\text{-}$open is obvious from Definition \ref{tanım}.
\end{remark}
\begin{theorem}
Let $X$ be a topological space and $A\subseteq X.$ If $A$ is $e^*\text{-}$open, then $e^*\text{-}int(e^*\text{-}cl_{\theta}(A))$ is $e^*\text{-}\theta\text{-}open.$
\end{theorem}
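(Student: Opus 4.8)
The plan is to reduce the $\theta$-closure appearing in the statement to an ordinary $e^*$-closure using the hypothesis that $A$ is $e^*$-open, and then to chase the resulting set through the two characterizations of $e^*$-regularity. First I would observe that, since $A \in e^*O(X)$, Theorem \ref{1}(a) gives $e^*\text{-}cl_{\theta}(A) = e^*\text{-}cl(A)$, so the set under consideration is simply $e^*\text{-}int(e^*\text{-}cl(A))$. This is the crucial simplification: it converts a statement about the $e^*\text{-}\theta$-closure operator into one about the ordinary $e^*$-closure, for which the regularity machinery of the preliminaries applies directly.

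Next I would invoke Theorem 2.1(a): because $A \in e^*O(X)$, its $e^*$-closure $e^*\text{-}cl(A)$ lies in $e^*R(X)$. In particular $e^*\text{-}cl(A)$ is $e^*$-closed, that is, $e^*\text{-}cl(A) \in e^*C(X)$. Applying Theorem 2.1(b) to this $e^*$-closed set then yields $e^*\text{-}int(e^*\text{-}cl(A)) \in e^*R(X)$.

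Finally, to pass from $e^*$-regularity to $e^*\text{-}\theta$-openness, I would use Theorem \ref{1}(b), which asserts that every member of $e^*R(X)$ is both $e^*\text{-}\theta$-open and $e^*\text{-}\theta$-closed. Combining this with the identification from the first step, $e^*\text{-}int(e^*\text{-}cl(A)) = e^*\text{-}int(e^*\text{-}cl_{\theta}(A))$ is $e^*\text{-}\theta$-open, as desired.

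The argument is essentially a short chain of implications, so I do not expect a serious technical obstacle; the only point requiring care is the very first step. One must remember that the identity $e^*\text{-}cl_{\theta}(A) = e^*\text{-}cl(A)$ is available \emph{only} under the $e^*$-openness hypothesis via Theorem \ref{1}(a), and that $e^*$-regularity packages together both $e^*$-openness and $e^*$-closedness, which is precisely what permits Theorem 2.1(b) to be applied to $e^*\text{-}cl(A)$. Once these observations are in place, the conclusion follows immediately.
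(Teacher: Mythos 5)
Your proof is correct, but it takes a genuinely different route from the paper's. Both arguments begin with the same reduction --- using Theorem \ref{1}(a) to replace $e^*\text{-}cl_{\theta}(A)$ by $e^*\text{-}cl(A)$ under the hypothesis $A\in e^*O(X)$ --- but they diverge in how they certify $e^*\text{-}\theta$-openness of $e^*\text{-}int(e^*\text{-}cl(A))$. The paper runs a complementation argument: it writes $e^*\text{-}int(e^*\text{-}cl(A)) = X\setminus e^*\text{-}cl(X\setminus e^*\text{-}cl(A))$, notes that $X\setminus e^*\text{-}cl(A)$ is $e^*$-open so that its $e^*$-closure equals its $e^*\text{-}\theta$-closure, and concludes that the set in question is the complement of an $e^*\text{-}\theta$-closure, hence $e^*\text{-}\theta$-open (via idempotency, Theorem \ref{8}). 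You instead go through $e^*$-regularity: Theorem 2.1(a) gives $e^*\text{-}cl(A)\in e^*R(X)$, Theorem 2.1(b) applied to this $e^*$-closed set gives $e^*\text{-}int(e^*\text{-}cl(A))\in e^*R(X)$, and Theorem \ref{1}(b) converts regularity into $e^*\text{-}\theta$-openness. Your route is structurally cleaner and proves something strictly stronger --- the set is $e^*$-regular, hence $e^*\text{-}\theta$-closed as well as $e^*\text{-}\theta$-open --- whereas the paper's duality computation only extracts openness. One further simplification is available to you: since $e^*\text{-}cl(A)\in e^*R(X)\subseteq e^*O(X)$, one has $e^*\text{-}int(e^*\text{-}cl(A)) = e^*\text{-}cl(A)$ outright, so the detour through Theorem 2.1(b) can be dropped and the conclusion read off from Theorem 2.1(a) and Theorem \ref{1}(b) alone.
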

\begin{proof}
Let $A\in e^*O(X).$\\
$\begin{array}{l}A\in e^*O(X) \Rightarrow e^*\text{-}cl(A) \in e^*C(X) \Rightarrow X \setminus e^*\text{-}cl(A) \in e^*O(X)\\
\overset{\text{Theorem }\ref{185} }{\Rightarrow}  e^*\text{-}cl(X\setminus e^*\text{-}cl(A))= e^*\text{-}cl_{\theta}(X \setminus e^*\text{-}cl(A)) \ldots(1)
 \end{array}$ 
\\
$
\left.\begin{array} {rr}
X \setminus e^*\text{-}int(e^*\text{-}cl(A)) = e^*\text{-}cl(X\setminus e^*\text{-}cl(A)) \Rightarrow e^*\text{-}int(e^*\text{-}cl(A))=\setminus e^*\text{-}cl(\setminus e^*\text{-}cl(A)) \\
A \in e^*O(X)
\end{array}\right\} \Rightarrow$
\\
$\begin{array}{l}\Rightarrow e^*\text{-}int(e^*\text{-}cl_{\theta}(A))=e^*\text{-}int(e^*\text{-}cl(A))= X\setminus e^*\text{-}cl(X\setminus e^*\text{-}cl(A)) \ldots(2)\end{array}$ \\
$ (1),(2)$ $\Rightarrow e^*\text{-}int(e^*\text{-}cl_{\theta}(A))= \setminus e^*\text{-}cl_{\theta}(\setminus e^*\text{-}cl(A))\Rightarrow e^*\text{-}int(e^*\text{-}cl_{\theta}(A)) \in e^* \theta O(X).$
\end{proof}
\begin{theorem}
Let $X$ be a topological space. Then the notion of $e^*\text{-}\theta\text{-}$open is equivalent to the notion of $e^*\text{-}$regular if and only if $e^*\text{-}cl_{\theta}(A)$ is $e^*\text{-}$regular for every set $A\subseteq X.$ 
\end{theorem}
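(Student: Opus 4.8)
The statement asserts a biconditional whose left-hand condition, ``$e^*\text{-}\theta\text{-}$open is equivalent to $e^*\text{-}$regular,'' I read as the set equality $e^*\theta O(X)=e^*R(X)$; call this condition $(\mathrm{P})$, and call the right-hand condition ``$e^*\text{-}cl_{\theta}(A)\in e^*R(X)$ for every $A\subseteq X$'' condition $(\mathrm{Q})$. The plan is to prove $(\mathrm{P})\Leftrightarrow(\mathrm{Q})$ directly, leaning on three facts already available: by Theorem \ref{1}$(b)$ a set is $e^*\text{-}$regular exactly when it is simultaneously $e^*\text{-}\theta\text{-}$open and $e^*\text{-}\theta\text{-}$closed; by Theorem \ref{8} the operator $e^*\text{-}cl_{\theta}$ is idempotent, so $e^*\text{-}cl_{\theta}(A)$ is always $e^*\text{-}\theta\text{-}$closed; and, straight from the definitions, the class $e^*R(X)$ is closed under complementation, since the complement of a set that is both $e^*\text{-}$open and $e^*\text{-}$closed is again both. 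I also note at the outset that the inclusion $e^*R(X)\subseteq e^*\theta O(X)$ holds in every space (by the Remark following Theorem \ref{3}), so the real content of $(\mathrm{P})$ is the reverse inclusion.

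For $(\mathrm{P})\Rightarrow(\mathrm{Q})$, I would first upgrade $(\mathrm{P})$ to the statement $e^*\theta C(X)=e^*R(X)$. Indeed, $e^*\theta C(X)$ is by definition the collection of complements of members of $e^*\theta O(X)$; substituting $e^*\theta O(X)=e^*R(X)$ and using that $e^*R(X)$ is self-complementary turns this collection back into $e^*R(X)$. Then, for an arbitrary $A\subseteq X$, Theorem \ref{8} gives $e^*\text{-}cl_{\theta}(A)\in e^*\theta C(X)=e^*R(X)$, which is precisely $(\mathrm{Q})$.

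For $(\mathrm{Q})\Rightarrow(\mathrm{P})$, since $e^*R(X)\subseteq e^*\theta O(X)$ always holds, it suffices to show $e^*\theta O(X)\subseteq e^*R(X)$. Take $A\in e^*\theta O(X)$; then $X\setminus A$ is $e^*\text{-}\theta\text{-}$closed, i.e.\ $X\setminus A=e^*\text{-}cl_{\theta}(X\setminus A)$. Applying $(\mathrm{Q})$ to the subset $X\setminus A$ shows $e^*\text{-}cl_{\theta}(X\setminus A)$ is $e^*\text{-}$regular, so $X\setminus A\in e^*R(X)$, and by self-complementarity $A\in e^*R(X)$. This yields $(\mathrm{P})$.

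The argument is short, so rather than a single hard step the care lies in bookkeeping: the crux is to reduce ``$e^*\text{-}$regular'' to the pair ``$e^*\text{-}\theta\text{-}$open and $e^*\text{-}\theta\text{-}$closed'' via Theorem \ref{1}$(b)$ and then let passage to complements do the work, so that in each direction the hypothesis is applied to the complement of the set in hand rather than to the set itself. The only points to watch are keeping the open/closed duality straight and remembering that $e^*\text{-}cl_{\theta}(A)$ is an $e^*\text{-}\theta\text{-}$closed set for free by idempotency; I do not expect any genuinely new construction or estimate to be required.
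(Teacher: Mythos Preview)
Your proposal is correct and follows essentially the same route as the paper: both directions hinge on idempotency of $e^*\text{-}cl_{\theta}$, the characterization $e^*R(X)=e^*\theta O(X)\cap e^*\theta C(X)$, and the closure of $e^*R(X)$ under complementation. The only cosmetic difference is that in the backward direction the paper invokes the $\theta\text{-}c\text{-}e^*$-open description (Remark~\ref{uyarı}) to produce some $A$ with $X\setminus U=e^*\text{-}cl_{\theta}(A)$, whereas you simply take $A=X\setminus U$; and in the forward direction the paper verifies $e^*\text{-}cl_{\theta}(A)\in e^*\theta O(X)$ and $e^*\text{-}cl_{\theta}(A)\in e^*\theta C(X)$ separately rather than first upgrading to $e^*\theta C(X)=e^*R(X)$.
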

\begin{proof}
$(\Rightarrow):$ Let $e^*\theta O(X)=e^*R(X)$ and $A\subseteq X. $\\
$A\subseteq X \Rightarrow e^*\text{-}cl_{\theta}(A) \overset{\text{Theorem }\ref{8} } {=}e^*\text{-}cl_{\theta}(e^*\text{-}cl_{\theta}(A)) \Rightarrow e^*\text{-}cl_{\theta}(A) \in e^*\theta C(X) \ldots (1)$\\
$\left.\begin{array}{rr}
e^*\text{-}cl_{\theta}(A) \in e^*\theta C(X) \Rightarrow X\setminus e^*\text{-}cl_{\theta}(A) \in e^* \theta O(X)\\
e^*\theta O(X) =e^*R(X)
\end{array}\right\} \Rightarrow$ \\
$\begin{array}{l}\Rightarrow$ $X \setminus e^*\text{-}cl_{\theta}(A) \in e^*R(X)\subseteq  e^*\theta C(X) \end{array}$ \\
$\begin{array}{l}\Rightarrow e^*\text{-}cl_{\theta}(A) \in e^*\theta O(X) \ldots(2)\end{array}$ \\
$(1),(2)\Rightarrow e^*\text{-}cl_{\theta}(A)\in e^*R(X).$ \\ \\
$(\Leftarrow):$ Let $U\in e^*\theta O(X).$ Our aim is to show that $U\in e^*R(X).$ 
\\
$ \left.\begin{array}{rr}
U\in e^*\theta O(X) \overset{\text{Remark }\ref{uyarı} }{\Rightarrow} (\exists A\subseteq X)(X\setminus U=e^*\text{-}cl_{\theta}(A))  \\
\text{Hypothesis}
\end{array}\right\} \Rightarrow X\setminus U \in e^*R(X) $ \\
$\begin{array}{l}\Rightarrow U\in e^*R(X).\end{array}$
\end{proof}
\begin{theorem}
Let $X$ be a topological space and $B\subseteq X.$ If $B$ is $e^*\text{-}\theta$-open, then $B$ is an union some of $e^*\text{-}$regular sets.
\end{theorem}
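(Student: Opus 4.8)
The plan is to invoke Corollary~\ref{5}$(a)$, which characterizes $e^*\text{-}\theta$-open sets pointwise in terms of $e^*$-regular neighbourhoods. Suppose $B$ is $e^*\text{-}\theta$-open. First I would fix an arbitrary point $x\in B$; by Corollary~\ref{5}$(a)$ there exists an $e^*$-regular set $U_x\in e^*R(X,x)$ with $x\in U_x\subseteq B$. Doing this for every point of $B$ produces a family $\{U_x\}_{x\in B}$ of $e^*$-regular subsets of $X$, each contained in $B$.

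The second step is to assemble these sets into $B$. I would show $B=\bigcup_{x\in B}U_x$ by the usual double inclusion: the inclusion $\bigcup_{x\in B}U_x\subseteq B$ is immediate since $U_x\subseteq B$ for each $x$, while $B\subseteq\bigcup_{x\in B}U_x$ follows because each $x\in B$ satisfies $x\in U_x$. Hence $B$ is exhibited as a union of the $e^*$-regular sets $U_x$, which is precisely the claim.

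There is essentially no hard step here: the content is entirely carried by Corollary~\ref{5}$(a)$, and the remainder is the routine verification that a set equals the union of the neighbourhoods witnessing membership of each of its points. The only point worth stating carefully is that the witnessing sets $U_x$ are genuinely $e^*$-regular (they lie in $e^*R(X,x)$), so that the union is indeed a union of $e^*$-regular sets and not merely of $e^*$-open ones.

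\begin{proof}
Let $B\in e^*\theta O(X)$. By Corollary~\ref{5}$(a)$, for each $x\in B$ there exists $U_x\in e^*R(X,x)$ such that $x\in U_x\subseteq B$. Since $U_x\subseteq B$ for every $x\in B$, we have $\bigcup_{x\in B}U_x\subseteq B$; and since $x\in U_x$ for every $x\in B$, we have $B\subseteq\bigcup_{x\in B}U_x$. Therefore
$$B=\bigcup_{x\in B}U_x,$$
where each $U_x\in e^*R(X)$. Hence $B$ is a union of some $e^*$-regular sets.
\end{proof}
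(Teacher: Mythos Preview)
Your proof is correct but takes a different route from the paper. You invoke Corollary~\ref{5}$(a)$ directly, which immediately furnishes an $e^*$-regular neighbourhood $U_x\subseteq B$ at each point, and then write $B=\bigcup_{x\in B}U_x$. The paper instead passes through Remark~\ref{uyarı} to write $B=X\setminus e^*\text{-}cl_\theta(A)$ for some $A$, then for each $x\in B$ unpacks the definition of $e^*\text{-}cl_\theta$ to obtain $W_x\in e^*O(X,x)$ with $e^*\text{-}cl(W_x)\cap A=\emptyset$, and finally uses the sets $e^*\text{-}int(e^*\text{-}cl(W_x))\in e^*R(X)$ as the regular pieces whose union is $B$. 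Your argument is shorter and more transparent, since Corollary~\ref{5}$(a)$ already packages exactly the pointwise $e^*$-regular neighbourhood you need; the paper's approach is essentially a re-derivation of that corollary in situ via the complement description of $e^*\text{-}\theta$-open sets.
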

\begin{proof}
Let $B\in e^*\theta O(X)$ and $x\in B.$\\
$\left.\begin{array}{rr}
B\in e^*\theta O(X)\overset{\text{Remark }\ref{uyarı} }{\Rightarrow} (\exists A\subseteq X)(B= X\setminus e^*\text{-}cl_{\theta}(A)) \\
x\in B
\end{array}\right\} \Rightarrow  x\notin e^*\text{-}cl_{\theta}(A) $ \\
$\begin{array}{l}\Rightarrow (\exists W_{x} \in e^*O(X,x))(e^*\text{-}cl(W_{x})\cap A= \emptyset)
\end{array}$ \\
$\begin{array}{l}\Rightarrow (\exists W_{x} \in e^*O(X,x))(e^*\text{-}cl(W_{x})\subseteq \setminus A)
\end{array}$ \\
$\left.\begin{array}{rr}
\Rightarrow (W_x \in e^*O(X,x))(e^*\text{-}int(e^*\text{-}cl(W_{x}))=(e^*\text{-}int_{\theta }(e^*\text{-}cl(W_{x}))\subseteq e^*\text{-}int_{\theta }(\setminus A)= \setminus e^*\text{-}cl_{\theta}(A) \\
\mathcal{A}:=\{e^*\text{-}int(e^*\text{-}cl(W_{x}))| (\forall x\in B)(\exists W_{x}\in e^*O(X,x))(e^*\text{-}int(e^*\text{-}cl(W_{x}))\subseteq  \setminus e^*\text{-}cl_{\theta}(A))\}
\end{array}\right\} \Rightarrow$ \\
$\begin{array}{l}\Rightarrow(\mathcal{A}\subseteq e^*R(X))(B=\bigcup \mathcal{A}).
\end{array}$
\end{proof}

\begin{corollary}
Let $X$ be a topological space and $B\subseteq X.$ If $B$ is $e^*\text{-}\theta$-closed, then $B$ is an intersection some of $e^*$-regular sets.
\end{corollary}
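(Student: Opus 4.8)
The plan is to deduce this corollary from the immediately preceding theorem by passing to complements. Since $B$ is $e^*\text{-}\theta\text{-}$closed, its complement $X\setminus B$ is by definition $e^*\text{-}\theta\text{-}$open, so the preceding theorem applies to $X\setminus B$ directly.

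First I would invoke the preceding theorem on $X\setminus B$ to write $X\setminus B=\bigcup_{\alpha\in\Lambda}U_{\alpha}$ with each $U_{\alpha}\in e^*R(X)$. Taking complements and applying De Morgan's law then yields $B=\bigcap_{\alpha\in\Lambda}(X\setminus U_{\alpha})$, which already exhibits $B$ as an intersection of the sets $X\setminus U_{\alpha}$.

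The only point that genuinely needs verification is that each $X\setminus U_{\alpha}$ is again $e^*$-regular. This follows at once from the definition: a set is $e^*$-regular precisely when it is simultaneously $e^*$-open and $e^*$-closed, and since the complement of an $e^*$-open set is $e^*$-closed while the complement of an $e^*$-closed set is $e^*$-open, the class $e^*R(X)$ is closed under complementation. Hence $X\setminus U_{\alpha}\in e^*R(X)$ for every $\alpha\in\Lambda$, and $B$ is indeed an intersection of $e^*$-regular sets.

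I do not anticipate any real obstacle here, since the substantive construction was already carried out in the preceding theorem and the corollary is a routine dualization. The one detail to state carefully is the self-complementarity of the $e^*$-regular class, but this is immediate from its definition as the intersection of the $e^*$-open and $e^*$-closed families.
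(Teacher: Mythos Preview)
Your proposal is correct and is exactly the dualization the paper intends: the corollary is stated immediately after the theorem with no separate proof, so complementing, applying the theorem to $X\setminus B$, and using that $e^*R(X)$ is closed under complements is precisely the argument expected. Nothing needs to be added.
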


\section[On $e^*\text{-}\theta\text{-}D_{i}$ and $e^*\theta\text{-}T_{i}$ topological spaces]{On $e^*\text{-}\theta\text{-}D_{i}$ and $e^*\theta\text{-}T_{i}$ topological spaces}
In this chapter, we introduce some classes of sets via the notion of $e^*\text{-}\theta$-open sets. Also, the relationships between these notions and some other existing notions in the literature are investigated.

\begin{definition} \label{6}
A subset $A$ of a topological space $X$ is called an $e^*\text{-}\theta\text{-}D$-set if there exist two sets $U,V\in e^*\theta O(X)$ such that $U \neq X$ and $A=U\setminus V.$ The family of all $e^*\text{-}\theta\text{-}D$-set of $X$  and all $e^*\text{-}\theta\text{-}D$-set of $X$ containing $x\in X$ will be denoted by $e^*\theta D(X)$ and $e^*\theta D(X,x),$ respectively.
\end{definition}
\begin{remark}
It is clear that every $e^*\text{-}\theta$-open set $U$ different from $X$ is an $e^*\text{-}\theta\text{-}D\text{-}$set. However, the converse of this implication need not be true as shown by the following example.
\end{remark}

\begin{example}
Let $X=\{a,b,c,d\}$ and $\tau=\{\emptyset,X,\{a\},\{c\},\{a,c\},\{c,d\},\{a,c,d\}\}.$ Then, $e^*\theta O(X)=2^X\setminus \{\{b\}\}$ and $e^*\theta D(X)=2^X\setminus \{X\}.$ It is obvious that the set $\{b\}$ is an $e^*$-$\theta$-$D$-set but it is not $e^*$-$\theta$-open.
\end{example}

\begin{definition}
A topological space $X$ is called $e^*\text{-}\theta\text{-}D_{0}$ if for any distinct pair of points $x$ and $y$ in $X,$ there exists $e^*\text{-}\theta\text{-}D\text{-}$set $U$ of $X$ containing $x$ but not $y$ or $e^*\text{-}\theta\text{-}D\text{-}$set $V$ of $X$ containing $y$ but not $x.$ 
\end{definition}

\begin{definition}
A topological space $X$ is called $e^*\text{-}\theta\text{-}D_{1}$
if for any distinct pair of points $x$ and $y$ in $X,$ there exists $e^*\text{-}\theta\text{-}D\text{-}$set $U$ in $X$ containing $x$ but not $y$ and $e^*\text{-}\theta\text{-}D\text{-}$set $V$ of $X$ containing $y$ but not $x.$
\end{definition}
    
\begin{definition}
A topological space $X$ is called $e^*\text{-}\theta\text{-}D_{2}$
if for any distinct pair of points $x$ and $y$ in $X,$ there exist two $e^*\text{-}\theta\text{-}D\text{-}$sets $U$ and $V$ of $X$ containing $x$ and $y,$ respectively, such that $U\cap V=\emptyset.$
\end{definition}

\begin{definition}$($\cite{Ayhanozkoc}$)$
A topological space $X$ is called $e^*\theta\text{-}T_{0}$
if for any distinct pair of points $x$ and $y$ in $X,$ there exists an $e^*\text{-}\theta$-open set $U$ of $X$ containing $x$ but not $y$ and an $e^*\text{-}\theta$-open set $V$ of $X$ containing $y$ but not $x.$
\end{definition}

\begin{definition}$($\cite{Ayhanozkoc}$)$
A topological space $X$  is called $e^*\theta\text{-}T_{1}$
if for any distinct pair of points $x$ and $y$ in $X,$ there exists an $e^*\text{-}\theta\text{-}$open set $U$ of $X$ containing $x$ but not $y$ and an $e^*\text{-}\theta$-open set $V$ of $X$ containing $y$ but not $x.$ 
\end{definition}

\begin{definition} $($\cite{Ayhanozkoc}$)$ \label{7}
A topological space $X$ is called $e^*\theta\text{-}T_{2}$
if for any distinct pair of points $x$ and $y$ in $X,$ there exist two $e^*\text{-}\theta$-open sets $U$ and $V$ of $X$ containing $x$ and $y,$ respectively, such that $U\cap V = \emptyset.$
    \end{definition}
    \begin{remark}\label{tablo}
From Definitions \ref{6} to \ref{7}, we obtain the following diagram: 
    \begin{equation*}
	\begin{array}{c}
	\begin{array}{ccccccccc}
e^*\theta \text{-}T_2 & \Rightarrow & e^*\theta \text{-}T_1 & \Rightarrow & e^*\theta \text{-}T_0 \\
\Downarrow  && \Downarrow  & & \Downarrow & &&& \\ e^*\text{-}\theta \text{-}D_2 & \Rightarrow & e^*\text{-}\theta \text{-}D_1 & \Rightarrow & e^*\text{-}\theta \text{-}D_0  &  & &&\ \mbox{}\end{array}
	\end{array}%
	\end{equation*}
    \end{remark}

    \begin{theorem}$($\cite{Ayhanozkoc}$)$
Let $X$ be a topological space. If $X$ is  $e^*\theta\text{-}T_{0},$ then it is $e^*\theta\text{-}T_{2}.$
    \end{theorem}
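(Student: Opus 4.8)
The plan is to exploit the fact that $e^*$-regular sets behave like clopen sets within the $e^*\text{-}\theta$-open structure, which collapses the separation hierarchy so strongly that the weakest listed axiom already forces the strongest. First I would fix a distinct pair $x,y\in X$ and invoke the $e^*\theta\text{-}T_0$ hypothesis to obtain an $e^*\text{-}\theta$-open set separating them; without loss of generality there is $U\in e^*\theta O(X)$ with $x\in U$ and $y\notin U$.

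The crucial step is to refine $U$ down to an $e^*$-regular neighborhood of $x$. Since $x\in U$ and $U$ is $e^*\text{-}\theta$-open, Corollary \ref{5}$(a)$ supplies a set $W\in e^*R(X,x)$ with $x\in W\subseteq U$. Because $W\subseteq U$ and $y\notin U$, we also have $y\notin W$. This is where the substance of the argument lies: passing from an arbitrary $e^*\text{-}\theta$-open separator to an $e^*$-regular one is exactly what converts the one-sided $e^*\theta\text{-}T_0$-separation into two-sided $e^*\theta\text{-}T_2$-separation.

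Now I would apply Theorem \ref{1}$(b)$: since $W\in e^*R(X)$, the set $W$ is simultaneously $e^*\text{-}\theta$-open and $e^*\text{-}\theta$-closed, so its complement $V:=X\setminus W$ is $e^*\text{-}\theta$-open. By construction $y\in V$ (as $y\notin W$), while $W\cap V = W\cap(X\setminus W)=\emptyset$. Thus $W$ and $V$ are disjoint $e^*\text{-}\theta$-open sets containing $x$ and $y$ respectively, which is precisely the $e^*\theta\text{-}T_2$ condition, completing the proof.

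The only genuine insight needed is the identification, via Theorem \ref{1}$(b)$, of $e^*$-regularity with being $e^*\text{-}\theta$-clopen; once this is combined with the local $e^*$-regular refinement of Corollary \ref{5}$(a)$, the remaining steps are bookkeeping. I do not anticipate a serious obstacle, but I would verify carefully that the extracted $W$ truly contains $x$ and is genuinely $e^*$-regular, since the entire argument rests on $X\setminus W$ being $e^*\text{-}\theta$-open, which would fail without $e^*$-regularity of $W$.
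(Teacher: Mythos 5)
Your proof is correct. The paper states this theorem without proof (it is quoted from \cite{Ayhanozkoc}), so there is no in-paper argument to compare against; your route --- refining the $e^*\text{-}\theta$-open separator around $x$ to an $e^*$-regular set $W$ via Corollary \ref{5}$(a)$, then using Theorem \ref{1}$(b)$ to conclude that $X\setminus W$ is $e^*\text{-}\theta$-open --- is exactly the argument this machinery is built for, and every step checks out (indeed, with the paper's stated definition of $e^*\theta\text{-}T_0$, which already demands separators on both sides, your ``without loss of generality'' is not even needed).
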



    \begin{theorem}
Let $X$ be a topological space. If $X$ is  $e^*\text{-}\theta\text{-}D_{0},$ then it is $e^*\theta\text{-}T_{0}.$
    \end{theorem}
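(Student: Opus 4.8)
The plan is to adapt the classical argument showing that the $D_0$ and $T_0$ separation properties coincide, transplanted into the $e^*$-$\theta$ setting. Fix an arbitrary pair of distinct points $x, y \in X$. Since $X$ is $e^*\text{-}\theta\text{-}D_0$, there is an $e^*\text{-}\theta\text{-}D$-set separating them; without loss of generality assume it contains $x$ but not $y$ (the symmetric case is handled by interchanging the roles of $x$ and $y$). Call this set $A$, and by Definition \ref{6} expand it as $A = U \setminus V$ with $U, V \in e^*\theta O(X)$ and $U \neq X$.

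From $x \in A = U \setminus V$ I immediately record the two membership facts $x \in U$ and $x \notin V$. The crux of the argument is a two-case analysis on whether the excluded point $y$ lies in $U$. In the first case, $y \notin U$: then $U$ itself is an $e^*$-$\theta$-open set containing $x$ but not $y$, which is exactly the separator required for $e^*\theta\text{-}T_0$. In the second case, $y \in U$: since $y \notin A = U \setminus V$ while $y \in U$, set-difference membership forces $y \in V$; combining this with $x \notin V$ from the first step, the set $V \in e^*\theta O(X)$ contains $y$ but not $x$, again producing an $e^*$-$\theta$-open set containing exactly one of the two points. In both cases we obtain an $e^*$-$\theta$-open separator, so $X$ is $e^*\theta\text{-}T_0$.

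I expect no genuine obstacle; the argument is purely combinatorial once the $e^*$-$\theta$-$D$-set is written as a difference of two $e^*$-$\theta$-open sets. The only point requiring care is the bookkeeping of membership: because an $e^*$-$\theta$-$D$-set is a difference $U \setminus V$ rather than a single $e^*$-$\theta$-open set, one cannot read off the separation directly from $A$, and must instead recover an honest $e^*$-$\theta$-open separator ($U$ or $V$) through the case split on the location of $y$. Notably, no heavier machinery from the preliminaries (such as $e^*\text{-}cl_\theta$, idempotency, or $e^*$-regularity) is needed here: only the defining form of an $e^*$-$\theta$-$D$-set in Definition \ref{6} and the definition of $e^*\theta\text{-}T_0$ enter the proof.
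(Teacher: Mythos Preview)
Your proof is correct and follows essentially the same approach as the paper's own proof: both take the separating $e^*\text{-}\theta\text{-}D$-set, write it as $U\setminus V$ with $U,V\in e^*\theta O(X)$, and perform the same two-case split on whether the excluded point lies in $U$ to extract an $e^*\text{-}\theta$-open separator. Your exposition is in fact slightly cleaner, making the ``without loss of generality'' reduction for the symmetric case explicit.
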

    \begin{proof}
It suffices to prove that every $e^*$-$\theta$-$D_0$ space is $e^*\theta$-$T_0.$ 
\\ 
Let $x,y\in X$ and $x \neq y.$ \\
      $\left.\begin{array}{rr}
     (x,y\in X) (x\neq y) \\
    X \text{ is } e^*\text{-}\theta\text{-}D_{0}
    \end{array}\right\} $ 
    $\Rightarrow (\exists A \in e^*\theta D(X,x))(y\notin A)\lor(\exists B\in e^*\theta D(X,y))(x\notin B)\\
    \begin{array}{l}\Rightarrow (\exists N,M\in e^*\theta O(X))(M \neq X)(A = M\setminus N)(x \in A)(y\notin M \lor y =M \cap N) \end{array}$ \\
   \textit{First case}:
    Let $ y\notin M.$\\
    $ \left.\begin{array}{rr}
    U:= M \\
    y\notin M
    \end{array}\right\} \Rightarrow (U \in e^*\theta O(X,x))(y \notin U)$\\
    \textit{Second case}: 
    Let $y\in M\cap N.$ \\
    $ \left.\begin{array}{rr}
    (y\in M\cap N\subseteq N)(V:= N)\\
   (A= M \setminus N)(x \in A) \Rightarrow x \notin N
    \end{array}\right\} \Rightarrow (V \in e^*\theta O(X,y))(x \notin V.)$
     \end{proof}

\begin{corollary}
  For a topological space $X$, all notions given in Remark \ref{tablo} are equivalent.     
    \end{corollary}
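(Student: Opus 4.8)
The plan is to treat this purely as a diagram chase, since all of the substantive work has already been carried out in the two preceding theorems. First I would record exactly what Remark \ref{tablo} supplies: the horizontal implications $e^*\theta\text{-}T_2 \Rightarrow e^*\theta\text{-}T_1 \Rightarrow e^*\theta\text{-}T_0$ and $e^*\text{-}\theta\text{-}D_2 \Rightarrow e^*\text{-}\theta\text{-}D_1 \Rightarrow e^*\text{-}\theta\text{-}D_0$, together with the three vertical implications $e^*\theta\text{-}T_i \Rightarrow e^*\text{-}\theta\text{-}D_i$ for $i \in \{0,1,2\}$. These are all one-directional, so by themselves they cannot force equivalence; the two extra theorems are precisely what close the cycles.

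Next I would exhibit a single cyclic chain passing through five of the six notions:
\[
e^*\theta\text{-}T_2 \Rightarrow e^*\text{-}\theta\text{-}D_2 \Rightarrow e^*\text{-}\theta\text{-}D_1 \Rightarrow e^*\text{-}\theta\text{-}D_0 \Rightarrow e^*\theta\text{-}T_0 \Rightarrow e^*\theta\text{-}T_2.
\]
Here the first arrow is the vertical implication at level $2$, while the second and third are the horizontal $D$-implications, all read off from Remark \ref{tablo}; the fourth arrow is the theorem $e^*\text{-}\theta\text{-}D_0 \Rightarrow e^*\theta\text{-}T_0$, and the closing arrow is the theorem $e^*\theta\text{-}T_0 \Rightarrow e^*\theta\text{-}T_2$. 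Since this chain returns to its starting point, every notion appearing in it, namely $e^*\theta\text{-}T_2$, $e^*\text{-}\theta\text{-}D_2$, $e^*\text{-}\theta\text{-}D_1$, $e^*\text{-}\theta\text{-}D_0$, and $e^*\theta\text{-}T_0$, is equivalent to each of the others.

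It then remains only to absorb $e^*\theta\text{-}T_1$ into this equivalence class, which I would do by sandwiching: Remark \ref{tablo} gives $e^*\theta\text{-}T_2 \Rightarrow e^*\theta\text{-}T_1 \Rightarrow e^*\theta\text{-}T_0$, and since $e^*\theta\text{-}T_2$ and $e^*\theta\text{-}T_0$ already lie in the same equivalence class, $e^*\theta\text{-}T_1$ is squeezed between two equivalent conditions and hence is equivalent to them as well. This accounts for all six notions.

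I do not anticipate any genuine obstacle, as the argument is purely formal once the two theorems are in hand; the only point requiring care is bookkeeping, namely checking that the chosen cycle visits enough nodes (five of the six) so that the remaining node $e^*\theta\text{-}T_1$ can be captured by the sandwich, and verifying that each arrow invoked is one that Remark \ref{tablo} or one of the two theorems genuinely provides, rather than one in the already-established reverse direction.
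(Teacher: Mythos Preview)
Your proposal is correct and is precisely the intended argument: the paper states the corollary without proof, treating it as an immediate diagram chase from Remark \ref{tablo} together with the two preceding theorems ($e^*\theta\text{-}T_0 \Rightarrow e^*\theta\text{-}T_2$ and $e^*\text{-}\theta\text{-}D_0 \Rightarrow e^*\theta\text{-}T_0$), which is exactly the cycle-plus-sandwich you describe.
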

     
\begin{definition}
Let $X$ be a topological space, $N \subseteq X$ and $x\in X$. The set $N$ is called an $e^*\text{-}\theta\text{-}$neighbourhood of $x$ in $X$ if there exists an $e^*\text{-}\theta\text{-}$open set $U$ of $X$ such that $x \in U \subseteq N.$ The family of all $e^*\text{-}\theta\text{-}$neighbourhood of a point $x$ is denoted by $\mathcal{N}_{e^* \theta}(x).$
\end{definition}     
     
\begin{definition}
Let $X$ be a topological space and $x\in X.$ The point $x$ which has only $X$ as the $e^*\text{-}\theta\text{-}$neighbourhood is called a point common to all $e^*\text{-}\theta\text{-}$closed sets (briefly, $e^*\text{-}\theta\text{-cc}).$
\end{definition}
     
\begin{theorem}
Let $X$ be a topological space. If $X$ is $e^*\text{-}\theta\text{-}D_{1}$, then $X$ has no  $e^*\text{-}\theta\text{-cc}\text{-}point.$
\end{theorem}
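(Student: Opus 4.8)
The plan is to argue by contradiction, so I would begin by supposing that $X$ does possess an $e^*\text{-}\theta\text{-cc}$-point $x$ and aim to contradict the $e^*\text{-}\theta\text{-}D_1$ hypothesis. The first step is purely definitional: I would reformulate the cc-property. Saying that $x$ admits only $X$ as an $e^*\text{-}\theta$-neighbourhood is equivalent to saying that the only $e^*\text{-}\theta$-open set containing $x$ is $X$ itself. Indeed, were there a proper $U\in e^*\theta O(X)$ with $x\in U$, this $U$ would already witness a proper $e^*\text{-}\theta$-neighbourhood of $x$, contradicting the cc-property; conversely the cc-property forbids exactly such a $U$. Securing this equivalence is the whole point of the opening move.

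Next I would invoke the hypothesis. Choosing any point $y\neq x$, the $e^*\text{-}\theta\text{-}D_1$ condition supplies an $e^*\text{-}\theta\text{-}D$-set $A$ with $x\in A$ and $y\notin A$. By Definition \ref{6} I may write $A=M\setminus N$ with $M,N\in e^*\theta O(X)$ and, crucially, $M\neq X$. Since $x\in A=M\setminus N\subseteq M$, the set $M$ is an $e^*\text{-}\theta$-open set containing $x$ that is distinct from $X$. This directly contradicts the reformulation of the cc-property obtained in the first step, and the argument closes.

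The main conceptual obstacle is recognizing which half of the $D_1$ condition to deploy: one must extract the $e^*\text{-}\theta\text{-}D$-set that contains the candidate point $x$ (not the one containing $y$), because only from a set containing $x$ of the form $M\setminus N$ with $M\neq X$ can the forbidden proper $e^*\text{-}\theta$-open neighbourhood $M$ of $x$ be manufactured. This is precisely why the full strength of $D_1$ rather than merely $D_0$ is the natural hypothesis here, since $D_0$ would guarantee such a set for only one of the two points, possibly $y$. The sole remaining subtlety is the degenerate case $|X|\leq 1$: a one-point space is vacuously $e^*\text{-}\theta\text{-}D_1$ yet its single point is a cc-point, so one tacitly assumes $X$ has at least two points in order to produce the point $y\neq x$ needed above.
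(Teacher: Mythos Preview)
Your argument is correct and mirrors the paper's proof: both extract from the $D_1$ hypothesis an $e^*\text{-}\theta\text{-}D$-set $A=M\setminus N$ containing $x$ with $M\in e^*\theta O(X)$, $M\neq X$, and then observe that $M$ is a proper $e^*\text{-}\theta$-neighbourhood of $x$. The only cosmetic difference is that the paper phrases the argument directly (showing $\mathcal{N}_{e^*\theta}(x)\neq\{X\}$ for an arbitrary $x$) rather than by contradiction, and, like you, tacitly assumes $|X|\geq 2$.
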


\begin{proof}
Let $x,y\in X$ and $x\neq y.$
\\
$\left.\begin{array}{rr} (x,y\in X)(x\neq y) \\ X \text{ is } e^*\text{-}\theta\text{-}D_{1}\end{array}\right\} \Rightarrow (\exists A\in e^*\theta D(X,x))(\exists B \in e^*\theta D(X,y)) (x\notin B)(y\notin A) $
\\
$\begin{array}{l}
\Rightarrow (\exists U,V\in  e^*\theta O(X)) 
(U\neq X) (x\in  A= U \backslash V)
\end{array}$
 \\
$\begin{array}{l}
\Rightarrow (U\in e^*\theta O(X))(x\in U\subseteq U\neq X) 
\end{array}$
\\
$\begin{array}{l}
\Rightarrow X\neq U\in \mathcal{N}_{e^*\theta}(x)   
\end{array}$
\\
$\begin{array}{l}
\Rightarrow \mathcal{N}_{e^* \theta}(x)\neq\{X\}.
\end{array}$
\end{proof}

     \begin{definition}
A subset $A$ of a topological space $X$ is called a quasi $e^*\text{-}\theta\text{-}$closed set (briefly, $qe^*\theta$-closed) if $e^*\text{-}cl_{\theta}(A) \subseteq U $ whenever $A\subseteq U $ and $U$ is $e^*\text{-}\theta$-open in $X.$ The family of all quasi $e^*\text{-}\theta\text{-}$closed set in $X$ will be denoted by $qe^*\theta C(X).$
\end{definition}

\begin{lemma} $($\cite{Ekici2}$)$
Let $A$ be any subset of a space $X.$ Then, $x \in e^*\text{-}cl_{\theta}(A)$ if and only if $U \cap A \neq \emptyset$ for each $U\in e^*R(X,x).$
\end{lemma}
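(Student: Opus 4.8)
The plan is to prove the two implications separately, working directly from the definition of an $e^*\text{-}\theta$-cluster point, namely that $x \in e^*\text{-}cl_{\theta}(A)$ holds precisely when $e^*\text{-}cl(U) \cap A \neq \emptyset$ for every $U \in e^*O(X,x)$. Two elementary facts will carry the whole argument: first, every $e^*$-regular set is by definition both $e^*$-open and $e^*$-closed, so it coincides with its own $e^*$-closure; second, the characterization $A \in e^*O(X) \Leftrightarrow e^*\text{-}cl(A) \in e^*R(X)$ supplied by Theorem 2.1(a). Intuitively, letting $U$ range over $e^*O(X,x)$ and passing to $e^*\text{-}cl(U)$ produces exactly the $e^*$-regular sets containing $x$, so the two neighbourhood systems used to test membership in $e^*\text{-}cl_{\theta}(A)$ are interchangeable, and the lemma merely records this.

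For the necessity direction I would assume $x \in e^*\text{-}cl_{\theta}(A)$ and fix an arbitrary $V \in e^*R(X,x)$. Since $e^*$-regularity implies $e^*$-openness, $V$ belongs to $e^*O(X,x)$, so the cluster-point condition yields $e^*\text{-}cl(V) \cap A \neq \emptyset$. But $V$ is also $e^*$-closed, whence $e^*\text{-}cl(V) = V$, and therefore $V \cap A \neq \emptyset$, which is exactly what is required of an arbitrary member of $e^*R(X,x)$.

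For the sufficiency direction I would assume $V \cap A \neq \emptyset$ for every $V \in e^*R(X,x)$ and fix an arbitrary $U \in e^*O(X,x)$. By Theorem 2.1(a), $e^*\text{-}cl(U) \in e^*R(X)$, and since $x \in U \subseteq e^*\text{-}cl(U)$ this set in fact lies in $e^*R(X,x)$; it is thus an admissible test set. Applying the hypothesis to this particular $V = e^*\text{-}cl(U)$ gives $e^*\text{-}cl(U) \cap A \neq \emptyset$. As $U$ was an arbitrary member of $e^*O(X,x)$, the defining condition for $x \in e^*\text{-}cl_{\theta}(A)$ is met.

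I do not anticipate a genuine obstacle, since the equivalence is essentially a translation between two equal families of test sets. The only points requiring care are interpretive and bookkeeping ones: reading the cluster-point condition with the nonempty intersection $e^*\text{-}cl(U) \cap A \neq \emptyset$, and verifying in the sufficiency step that $x \in U \subseteq e^*\text{-}cl(U)$ really places the constructed regular set $e^*\text{-}cl(U)$ inside $e^*R(X,x)$ so that the hypothesis may legitimately be invoked on it.
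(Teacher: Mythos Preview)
Your argument is correct. The paper, however, does not supply its own proof of this lemma: it is quoted from \cite{Ekici2} and stated without demonstration, so there is no in-paper proof to compare against. Your two-direction argument using Theorem~2.1(a) and the fact that an $e^*$-regular set equals its own $e^*$-closure is exactly the natural route and would serve as a complete proof; the only caveat is the evident typo in the paper's definition of an $e^*\text{-}\theta$-cluster point (it reads $e^*\text{-}cl(U)\cap A=\emptyset$ where $\neq\emptyset$ is intended), which you correctly interpreted.
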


\begin{theorem} \label{4}
For a topological space $X,$ the following statements hold:\\
$(a)$ For each pair of points x and y in X, $x \in e^*\text{-}cl_{\theta}(\{y\})$ implies $y \in e^*\text{-}cl_{\theta}(\{x\}); $ 
\\
$(b)$ For each $x\in X,$ the singleton $\{x\}$ is $qe^*\theta$-closed in $X.$
\end{theorem}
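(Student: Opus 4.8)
The plan is to reduce both parts to the characterization of the $e^*\text{-}\theta$-closure supplied by the preceding Lemma, namely that $x \in e^*\text{-}cl_{\theta}(A)$ if and only if every $e^*$-regular set containing $x$ meets $A$. Specialized to singletons, this reads: $x \in e^*\text{-}cl_{\theta}(\{y\})$ precisely when $y$ belongs to every $U \in e^*R(X,x)$. The single structural fact that drives everything is that the class of $e^*$-regular sets is closed under complementation: since $e^*$-regularity means being simultaneously $e^*$-open and $e^*$-closed (equivalently, by Theorem \ref{1}$(b)$, simultaneously $e^*\text{-}\theta$-open and $e^*\text{-}\theta$-closed), the complement of an $e^*$-regular set is again $e^*$-regular.

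For part $(a)$ I would argue by contraposition. Assume $y \notin e^*\text{-}cl_{\theta}(\{x\})$; by the Lemma there is some $V \in e^*R(X,y)$ with $x \notin V$. Then $X \setminus V$ is $e^*$-regular and contains $x$, so $X \setminus V \in e^*R(X,x)$, while $y \notin X \setminus V$. Hence there is an $e^*$-regular neighbourhood of $x$ missing $y$, which by the Lemma gives $x \notin e^*\text{-}cl_{\theta}(\{y\})$. This is exactly the contrapositive of the asserted implication, so $(a)$ follows; in effect it shows the relation $x \in e^*\text{-}cl_{\theta}(\{y\})$ is symmetric in $x$ and $y$.

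For part $(b)$ I must show that whenever $\{x\} \subseteq U$ with $U \in e^*\theta O(X)$, one has $e^*\text{-}cl_{\theta}(\{x\}) \subseteq U$. Fix such a $U$ and take an arbitrary $z \in e^*\text{-}cl_{\theta}(\{x\})$. Since $x \in U$ and $U$ is $e^*\text{-}\theta$-open, Corollary \ref{5}$(a)$ furnishes $W \in e^*R(X,x)$ with $x \in W \subseteq U$. Applying part $(a)$, from $z \in e^*\text{-}cl_{\theta}(\{x\})$ we obtain $x \in e^*\text{-}cl_{\theta}(\{z\})$; the Lemma then forces every $e^*$-regular set containing $x$---in particular $W$---to contain $z$. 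Thus $z \in W \subseteq U$, so $e^*\text{-}cl_{\theta}(\{x\}) \subseteq U$, and $\{x\}$ is $qe^*\theta$-closed.

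The only delicate point is the complementation step: one must confirm that the complement of an $e^*$-regular set is $e^*$-regular, so that passing from ``$V$ contains $y$ and misses $x$'' to ``$X \setminus V$ contains $x$ and misses $y$'' is legitimate; everything else is a mechanical application of the Lemma together with Corollary \ref{5}$(a)$. An alternative self-contained route to $(b)$, bypassing $(a)$, is to suppose $z \notin U$, extract $W \in e^*R(X,x)$ with $x \in W \subseteq U$ via Corollary \ref{5}$(a)$, and observe that $X \setminus W \in e^*R(X,z)$ misses $x$, contradicting $z \in e^*\text{-}cl_{\theta}(\{x\})$ through the Lemma.
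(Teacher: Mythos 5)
Your proposal is correct, and it differs from the paper's proof in instructive ways. For $(a)$, the paper argues by the same contraposition but manipulates the witness differently: from $V \in e^*O(X,y)$ with $e^*\text{-}cl(V) \cap \{x\} = \emptyset$ it passes to $U := e^*\text{-}cl(V) \in e^*R(X,y)$ and then concludes $x \notin e^*\text{-}cl_{\theta}(\{y\})$ in one step. As written, that final inference needs an $e^*$-open (indeed $e^*$-regular) set \emph{containing $x$} whose $e^*$-closure misses $y$, which is obtained precisely by replacing $U$ with $X \setminus U$; so the complementation step you single out as the ``only delicate point'' is not an artifact of your route --- it is exactly the inference the paper leaves implicit, and your version, which runs entirely through the preceding lemma and the closedness of $e^*R(X)$ under complements, spells it out. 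For $(b)$, the difference is more substantial: the paper never uses part $(a)$; it takes $V \in e^*R(X,x)$ with $V \subseteq U$ from Corollary \ref{5}$(a)$ and finishes by a monotonicity computation, $e^*\text{-}cl_{\theta}(\{x\}) \subseteq e^*\text{-}cl_{\theta}(V) = e^*\text{-}cl(V) = V \subseteq U$, invoking Theorem \ref{1}$(a)$ and the fact that $V$ is $e^*$-closed. You instead argue pointwise: $(a)$ flips $z \in e^*\text{-}cl_{\theta}(\{x\})$ into $x \in e^*\text{-}cl_{\theta}(\{z\})$, and the lemma then traps $z$ in $W \subseteq U$. The paper's computation is shorter and keeps the two parts independent, at the price of relying on the closure-operator identities; yours needs only the lemma, Corollary \ref{5}$(a)$ and complementation, and makes visible how $(b)$ flows from the symmetry in $(a)$ (your alternative, $(a)$-free route for $(b)$ is essentially the paper's argument carried out pointwise, with complementation doing the work of the identity $e^*\text{-}cl_{\theta}(V) = V$).
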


\begin{proof}
$(a)$ Let $x,y \in X $ and $ y \notin e^*\text{-}cl_{\theta}(\{x\}).$ \\
$\left.\begin{array}{rr}
y \notin e^*\text{-}cl_{\theta}(\{x\}) \Rightarrow (\exists V \in e^*O(X,y))(e^* \text{-}cl(V)\cap \{x\}= \emptyset) \\
\text{Theorem } \ref{1}
\end{array}\right\} \Rightarrow
$
\\
$
\left.\begin{array}{rr}
\Rightarrow (e^*\text{-}cl(V)\in e^*R(X,y))(e^*\text{-}cl(V)\cap \{x\}=\emptyset)
\\
U:= e^*\text{-}cl(V)
\end{array}\right\} \Rightarrow
$
\\
$
\begin{array}{l}\Rightarrow (U\in e^*O(X,y))(e^*\text{-}cl(U)\cap \{x\} =\emptyset) \end{array}
$
\\
$
\begin{array}{l}\Rightarrow x\notin e^*\text{-}cl_{\theta}(\{y\}).
\end{array}
$
\\

$(b)$ Let $x\in X $, $U\in e^*\theta O(X)$ and $\{x\}\subseteq U.$ 
\\
$
\begin{array}{rr}
(x\in X)(\{x\} \subseteq U)(U\in e^*\theta O (X)) \Rightarrow U\in e^*\theta O (X,x)
\end{array}
$
\\ 
$
\left.\begin{array}{rr}
 \overset{\text{Corollary } \ref{5}}{\Rightarrow} (\exists V \in e^*R(X,x))(V \subseteq U )\Rightarrow (V\in e^*R(X,x))(V= e^*\text{-}cl(V)\subseteq U) \\ e^*R(X)\subseteq e^*\theta O(X)
\end{array}\right\}\Rightarrow
$
\\
$
\begin{array}{l}
\Rightarrow(V \in e^*O(X,x))(e^*\text{-}cl_{\theta}(\{x\})\subseteq e^*\text{-}cl_{\theta}(V)= e^*\text{-}cl(V) \subseteq U). 
\end{array}
$
\end{proof}

\begin{definition}
A space $X$ is said to be  $e^*\text{-}\theta\text{-}T_{1/2} $ if $qe^*\theta C (X)\subseteq e^* \theta C (X).$
    \end{definition}
    
    \begin{theorem}
For a topological space $X,$ the followings are equivalent: \\
$(a)$ $X$ is $e^*\text{-}\theta\text{-}T_{1/2};$ \\
$(b)$ $X$ is $e^*\theta\text{-}T_{1}.$
\end{theorem}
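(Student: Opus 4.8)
The plan is to route both implications through a single bridging fact: \emph{$X$ is $e^*\theta\text{-}T_{1}$ if and only if every singleton $\{x\}$ is $e^*\text{-}\theta\text{-}closed$ in $X$.} I would establish this bridge first. For the forward direction, given a point $x$, the $e^*\theta\text{-}T_{1}$ property supplies for each $y\neq x$ some $V_{y}\in e^*\theta O(X,y)$ with $x\notin V_{y}$; since then $V_{y}\subseteq X\setminus\{x\}$ and $y\in V_{y}$, we get $X\setminus\{x\}=\bigcup_{y\neq x}V_{y}$, which is $e^*\text{-}\theta\text{-}open$ by Corollary \ref{5}$(b)$, so $\{x\}$ is $e^*\text{-}\theta\text{-}closed$. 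For the converse, if every singleton is $e^*\text{-}\theta\text{-}closed$, then for distinct $x,y$ the punctured sets $X\setminus\{y\}$ and $X\setminus\{x\}$ are $e^*\text{-}\theta\text{-}open$ and separate the points in both directions, giving $e^*\theta\text{-}T_{1}$.

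For $(a)\Rightarrow(b)$, assuming $qe^*\theta C(X)\subseteq e^*\theta C(X)$, I would invoke Theorem \ref{4}$(b)$, which guarantees that every singleton $\{x\}$ is $qe^*\theta$-closed. The $e^*\text{-}\theta\text{-}T_{1/2}$ hypothesis then upgrades each $\{x\}$ to an $e^*\text{-}\theta\text{-}closed$ set, and the bridge immediately yields $e^*\theta\text{-}T_{1}$.

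For $(b)\Rightarrow(a)$, assuming $X$ is $e^*\theta\text{-}T_{1}$, the bridge gives that every singleton is $e^*\text{-}\theta\text{-}closed$. Let $A\in qe^*\theta C(X)$; since $A\subseteq e^*\text{-}cl_{\theta}(A)$ always holds, it suffices to prove the reverse inclusion. Take any $x\notin A$. Then $A\subseteq X\setminus\{x\}$, and $X\setminus\{x\}$ is $e^*\text{-}\theta\text{-}open$, so the defining condition of $qe^*\theta$-closedness forces $e^*\text{-}cl_{\theta}(A)\subseteq X\setminus\{x\}$, i.e. $x\notin e^*\text{-}cl_{\theta}(A)$. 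Contraposing shows $e^*\text{-}cl_{\theta}(A)\subseteq A$, hence $A$ is $e^*\text{-}\theta\text{-}closed$; therefore $qe^*\theta C(X)\subseteq e^*\theta C(X)$ and $X$ is $e^*\text{-}\theta\text{-}T_{1/2}$.

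The two complement manipulations are routine. I expect the only point requiring care to be the converse direction, where one must recognize that the punctured sets $X\setminus\{x\}$ are legitimate $e^*\text{-}\theta\text{-}open$ test sets against which the $qe^*\theta$-closedness condition can be applied; this is precisely what the bridge lemma secures, and once that is in place the interplay between "closure contained in every $e^*\text{-}\theta\text{-}open$ superset" and the closedness of singletons closes the argument.
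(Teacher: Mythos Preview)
Your proposal is correct and close in spirit to the paper's proof, but with a worthwhile organizational difference. Both arguments use Theorem~\ref{4}$(b)$ to get that singletons are $qe^*\theta$-closed for the $(a)\Rightarrow(b)$ direction, and both ultimately separate points via the complements $X\setminus\{x\}$ and $X\setminus\{y\}$. The difference lies in $(b)\Rightarrow(a)$: the paper argues by contradiction, picking $x\in e^*\text{-}cl_{\theta}(A)\setminus A$ and, for each $a\in A$, invoking the $e^*\theta\text{-}T_{1}$ property to produce $V_{a}\in e^*\theta O(X,a)$ with $x\notin V_{a}$, then applying the $qe^*\theta$-closedness of $A$ to the union $\bigcup_{a\in A}V_{a}$. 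Your route is more economical: having first isolated the bridge ``$e^*\theta\text{-}T_{1}\Leftrightarrow$ every singleton is $e^*\text{-}\theta$-closed,'' you test $A$ against the single $e^*\text{-}\theta$-open set $X\setminus\{x\}$ rather than a union indexed by $A$. This avoids the contradiction framing and the appeal to Corollary~\ref{5}$(b)$ for the union, at the cost of stating the bridge lemma explicitly up front; the underlying idea is the same, but your packaging is a bit cleaner.
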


\begin{proof}
$(a)\Rightarrow(b):$ Let $x,y\in X$ and $x\neq y.$
\\
$\left.\begin{array}{r} x,y \in X  \overset{\text{Theorem } \ref{4}}{\Rightarrow}\{x\}, \{y\} \in qe^*\theta C (X) \\ X \text{ is } e^*\text{-}\theta\text{-}T_{1/2} \Rightarrow qe^*\theta C (X)\subseteq e^* \theta C (X) \end{array} \right\}\Rightarrow  \begin{array}{c} \\ \left. \begin{array}{r}  \{x\} ,\{y\}\in e^*\theta C (X) \\ 
x \neq y   \end{array} \right\} \Rightarrow \end{array}
$
\\
$
\left.\begin{array}{rr}
     \Rightarrow (X\setminus \{y\}\in e^*\theta O(X,x))(X\setminus \{x\}\in e^*\theta O(X,y)) \\
     (U:= X \setminus \{y\})
     (V:= X\setminus \{x\})
        \end{array}\right\} \Rightarrow$ \\
    $\begin{array}{l}\Rightarrow (U\in e^*\theta O(X,x))(V\in e^*\theta O (X,y))(y\notin U)(x \notin V).\\
    \end{array}$ 
    \\ 
    
    $(b)\Rightarrow (a):$ Let $A \in qe^*\theta C(X).$  Suppose that $A \notin e^* \theta C(X).$ We will obtain a contradiction.
    \\
    
    $
\left.\begin{array}{rr}
A \notin e^*\theta C(X) \Rightarrow A \neq e^*\text{-}cl_{\theta}(A) \Rightarrow (\exists x \in X)(x \in  e^*\text{-}cl_{\theta}(A)\setminus A)\\
X\text{ is } e^*\theta\text{-}T_{1}
\end{array}\right\} \Rightarrow $ 
        \\
        $\left.\begin{array}{rr}
\Rightarrow (\forall a\in A)(\exists V_{a}\in e^*\theta O (X,a))(x \notin V_{a}) \\ \text{Corollary } \ref{5}
        \end{array}\right\}\Rightarrow $
\\
$
\left.\begin{array}{rr}
 \Rightarrow (A\subseteq \bigcup_{a\in A} V_{a})(x \notin \bigcup_{a\in A} V_{a} \in e^* \theta O(X))\\
A\in qe^* \theta C(X)
\end{array}\right\} \Rightarrow x\notin e^*\text{-}cl_{\theta}(A)\subseteq \bigcup_{a\in A} V_{a}$
\\

This contradicts with $x \in e^*\text{-}cl_{\theta}(A).$
    \end{proof}
   
       \begin{definition}
       A function $f:X\rightarrow Y$ is said to be weakly $e^* $-irresolute  \cite{ozkocatasever} (briefly, $w.e^*.i.$) (resp. strongly $e^* $-irresolute) if for each $x\in X$ and each $V \in e^*O(Y,f(x)),$ there exists $U \in e^*O(X,x)$ such that  $f[U]\subseteq e^*\text{-}cl(V)$  $($resp. $f[e^*\text{-}cl(V)]\subseteq V).$
       \end{definition}
       \begin{remark}
 [\cite{ozkocatasever}, Theorem $3.7.(d)(e)$] A function $f:X\rightarrow Y$ is weakly $e^* \text{-}$irresolute if and only if $f^{-1}[V]$ is $e^*\text{-}\theta$-closed (resp. $e^*\text{-}\theta$-open) in $X$ for every $e^*\text{-}\theta$-closed (resp. $e^*\text{-}\theta$-open) set $V$ in $Y.$
       \end{remark}
       \begin{theorem}\label{2}
           If $f : X \rightarrow Y $ is a weakly $e^* \text{-}$irresolute surjection  and $A$ is an $e^*\text{-}\theta\text{-}$D-set in $Y,$ then the inverse image of $A$ is an $e^*\text{-}\theta\text{-}$D-set in $X$.
       \end{theorem}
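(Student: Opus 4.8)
The plan is to reduce everything to the definition of an $e^*\text{-}\theta\text{-}D$-set and to exploit the preimage characterization of weak $e^*$-irresoluteness recorded in the Remark just above. Since $A$ is an $e^*\text{-}\theta\text{-}D$-set in $Y,$ I would first write $A = U \setminus V$ for some $U, V \in e^*\theta O(Y)$ with $U \neq Y.$ The whole argument then rests on pushing this decomposition back through $f^{-1}.$

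The key computational step is the elementary identity $f^{-1}[U \setminus V] = f^{-1}[U] \setminus f^{-1}[V],$ which gives $f^{-1}[A] = f^{-1}[U] \setminus f^{-1}[V].$ Next I would invoke the fact that $f$ is weakly $e^*$-irresolute: by the quoted Remark, $f^{-1}[W] \in e^*\theta O(X)$ whenever $W \in e^*\theta O(Y).$ Applying this to both $U$ and $V$ yields $f^{-1}[U], f^{-1}[V] \in e^*\theta O(X).$ So the two ``building blocks'' of the decomposition are already of the required type, and the only remaining obligation is the properness condition $f^{-1}[U] \neq X$ demanded by Definition \ref{6}.

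The step I expect to be the real point of the proof—and the only place where the hypotheses beyond $e^*$-openness are actually used—is verifying $f^{-1}[U] \neq X,$ which is exactly where surjectivity enters. Since $U \neq Y,$ there is a point $y \in Y \setminus U;$ surjectivity of $f$ supplies some $x \in X$ with $f(x) = y,$ and then $f(x) = y \notin U$ forces $x \notin f^{-1}[U],$ so $f^{-1}[U] \neq X.$ Without surjectivity this can fail (the witness $y$ outside $U$ might have empty fiber), so this is the genuine obstacle rather than a formality.

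Collecting the three facts, $f^{-1}[A] = f^{-1}[U] \setminus f^{-1}[V]$ with $f^{-1}[U], f^{-1}[V] \in e^*\theta O(X)$ and $f^{-1}[U] \neq X,$ I would conclude directly from Definition \ref{6} that $f^{-1}[A] \in e^*\theta D(X),$ completing the proof.
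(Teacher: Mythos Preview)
Your proof is correct and follows essentially the same route as the paper: decompose $A=U\setminus V$, pull back via $f^{-1}$, use the preimage characterization of weak $e^*$-irresoluteness (the paper cites Theorem \ref{3}, which is the same content as the Remark you invoke) to get $f^{-1}[U],f^{-1}[V]\in e^*\theta O(X)$, and use surjectivity to ensure $f^{-1}[U]\neq X$. Your explicit witness argument for $f^{-1}[U]\neq X$ is just a spelled-out version of the paper's one-line inference $f^{-1}[U]\neq f^{-1}[Y]=X$.
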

       \begin{proof}
        Let $A \in e^*\theta D(Y).$\\
         $\left.\begin{array}{rr}
         A \in e^*\theta D(Y) \Rightarrow (\exists U,V \in  e^*\theta O(Y))(U \neq Y )( A = U\setminus V)\\
         f \text{ is weakly $e^*$-irresolute surjection}
         \end{array}\right\} \overset{\text{Theorem }\ref{3} }{\Rightarrow}  $
         \\
         $\begin{array}{l}\Rightarrow (f^{-1}[U], f^{-1}[V] \in e^*\theta O(X))(f^{-1}[U] \neq f^{-1}[Y]= X)(f^{-1}[A]= f^{-1}[U] \setminus f^{-1}[V])
         \end{array}$
         \\
         $\begin{array}{l}
         \Rightarrow f^{-1}[A]\in e^* \theta D (X).
         \end{array}$
       \end{proof}
       \begin{theorem}
           If $Y$ is an $e^*\text{-}\theta\text{-} D_{1}$ space and $f :X \rightarrow Y$ is a weakly $e^*$-irresolute bijection, then $X$ is $e^*\text{-}\theta\text{-} D_{1}.$ 
       \end{theorem}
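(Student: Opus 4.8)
The plan is to transfer the $e^*\text{-}\theta\text{-}D_1$ separation property from $Y$ to $X$ using the bijectivity of $f$ together with Theorem~\ref{2}, which guarantees that inverse images of $e^*\text{-}\theta\text{-}D$-sets are $e^*\text{-}\theta\text{-}D$-sets. First I would take two distinct points $x_1,x_2 \in X$; since $f$ is injective, their images $f(x_1)$ and $f(x_2)$ are distinct points of $Y$. Applying the $e^*\text{-}\theta\text{-}D_1$ hypothesis on $Y$ to this pair produces $e^*\text{-}\theta\text{-}D$-sets $A,B \in e^*\theta D(Y)$ with $f(x_1)\in A$, $f(x_2)\notin A$, and $f(x_2)\in B$, $f(x_1)\notin B$.

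Next I would pull these sets back along $f$. By Theorem~\ref{2}, since $f$ is a weakly $e^*$-irresolute surjection, the preimages $f^{-1}[A]$ and $f^{-1}[B]$ are again $e^*\text{-}\theta\text{-}D$-sets in $X$, that is, $f^{-1}[A], f^{-1}[B] \in e^*\theta D(X)$. It then remains to verify the membership conditions: from $f(x_1)\in A$ we get $x_1 \in f^{-1}[A]$, from $f(x_2)\notin A$ we get $x_2 \notin f^{-1}[A]$, and symmetrically $x_2 \in f^{-1}[B]$ while $x_1 \notin f^{-1}[B]$. Thus $f^{-1}[A]$ is an $e^*\text{-}\theta\text{-}D$-set containing $x_1$ but not $x_2$, and $f^{-1}[B]$ is one containing $x_2$ but not $x_1$, which is exactly the defining condition for $X$ to be $e^*\text{-}\theta\text{-}D_1$.

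The argument is almost entirely a formal diagram chase, so there is no single deep obstacle; the only points requiring care are bookkeeping ones. I would make sure to invoke injectivity precisely where it is needed, namely to conclude $f(x_1)\neq f(x_2)$ so that the $D_1$ property of $Y$ applies, and to handle the point-membership translations $x\in f^{-1}[A] \Leftrightarrow f(x)\in A$ correctly for both the ``containing'' and ``not containing'' clauses. Surjectivity enters only through the hypothesis of Theorem~\ref{2}, which is already available since $f$ is a bijection. Hence the proof reduces to citing Theorem~\ref{2} twice and reading off the four membership relations, and I would present it in the same implication-chain style as the surrounding proofs.
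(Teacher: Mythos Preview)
Your proposal is correct and follows essentially the same approach as the paper's proof: both arguments use injectivity to obtain distinct images in $Y$, apply the $e^*\text{-}\theta\text{-}D_1$ property of $Y$ to get two $e^*\text{-}\theta\text{-}D$-sets, and then invoke Theorem~\ref{2} (with surjectivity coming from the bijection hypothesis) to pull them back to $e^*\text{-}\theta\text{-}D$-sets in $X$ with the required membership properties. The only differences are purely notational.
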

    \begin{proof}
     Let $x,y \in X$ and $x\neq y . $\\
$\left.\begin{array}{r} (x,y \in X)( x\neq y) \\ f  \text{ is bijective} \end{array} \right\}\Rightarrow \begin{array}{c} \\ \left. \begin{array}{r} \!\!\!\!\!\! (f(x),f(y)\in Y)(f(x)\neq f(y))\\ 
    Y \text{ is }  e^*\text{-}\theta\text{-} D_1  \end{array} \right\} \Rightarrow \end{array}$
    \\
    $\left.\begin{array}{rr}
     \!\Rightarrow  (\exists U \in e^*\theta D (Y,f(x)))(\exists V \in e^*\theta D (Y,f(y)))(f(y)\notin U)(f(x)\notin V )\\
     \text{Theorem } \ref{2}
       \end{array}\right\} \Rightarrow $
       \\
       $\begin{array}{l}\Rightarrow  (y\notin f^{-1}[U] \in e^*\theta D (X, x)) (x \notin f^{-1}[V]\in e^*\theta D (X, y)).\end{array}$
    \end{proof}
    \begin{theorem}
        For a topological space $X,$ the followings are equivalent:\\
        $(a)$ $X$ is $e^*\text{-} \theta \text{-} D_{1};$\\
        $(b)$ For each pair of distinct points $x,y \in X,$ there exists a weakly $e^*\text{-} irresolute$
        surjection $f :X \rightarrow Y,$ where $Y$ is an $e^*\text{-} \theta \text{-} D_{1}$ space such that $f(x)\neq f(y).$
    \end{theorem}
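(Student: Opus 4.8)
The plan is to establish the two implications separately, with the forward direction being essentially immediate and the reverse direction resting on the pullback property already recorded in Theorem~\ref{2}.

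For $(a)\Rightarrow(b)$, I would take $Y:=X$ together with the identity function $f:=\mathrm{id}_X:X\to X$. This $f$ is clearly a surjection, and since $f^{-1}[V]=V$ for every subset $V\subseteq X$, the inverse image of each $e^*\text{-}\theta$-open set is $e^*\text{-}\theta$-open; hence $f$ is weakly $e^*$-irresolute by the remark preceding Theorem~\ref{2}. By hypothesis $Y=X$ is $e^*\text{-}\theta\text{-}D_1$, and for any distinct pair $x,y$ one has $f(x)=x\neq y=f(y)$, so condition $(b)$ holds.

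For $(b)\Rightarrow(a)$, let $x,y\in X$ with $x\neq y$. By $(b)$ there is a weakly $e^*$-irresolute surjection $f:X\to Y$ onto an $e^*\text{-}\theta\text{-}D_1$ space $Y$ with $f(x)\neq f(y)$. Applying the $e^*\text{-}\theta\text{-}D_1$ property of $Y$ to the distinct points $f(x),f(y)$, I obtain sets $U\in e^*\theta D(Y,f(x))$ and $V\in e^*\theta D(Y,f(y))$ with $f(y)\notin U$ and $f(x)\notin V$. Theorem~\ref{2} then guarantees that $f^{-1}[U]$ and $f^{-1}[V]$ are $e^*\text{-}\theta\text{-}D$-sets in $X$. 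I would finish by reading off the separation conditions directly from the point memberships: $f(x)\in U$ gives $x\in f^{-1}[U]$ while $f(y)\notin U$ gives $y\notin f^{-1}[U]$, and symmetrically $y\in f^{-1}[V]$ and $x\notin f^{-1}[V]$. Thus $f^{-1}[U]\in e^*\theta D(X,x)$ and $f^{-1}[V]\in e^*\theta D(X,y)$ witness that $X$ is $e^*\text{-}\theta\text{-}D_1$.

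The argument is largely routine once Theorem~\ref{2} is available; the only points requiring care are verifying that the identity map qualifies as a weakly $e^*$-irresolute surjection in the forward direction, and translating the point-membership conditions through $f^{-1}$ correctly in the reverse direction. I do not expect a genuine obstacle, since the substantive step—that inverse images of $e^*\text{-}\theta\text{-}D$-sets under weakly $e^*$-irresolute surjections are again $e^*\text{-}\theta\text{-}D$-sets—has already been carried out in Theorem~\ref{2}.
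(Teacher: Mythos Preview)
Your proposal is correct and follows essentially the same approach as the paper: both directions use the identity map $f=\mathrm{id}_X$ (with $Y=X$) for $(a)\Rightarrow(b)$, and invoke Theorem~\ref{2} to pull back $e^*\text{-}\theta\text{-}D$-sets for $(b)\Rightarrow(a)$. Your write-up is in fact slightly more explicit than the paper's in justifying why the identity is weakly $e^*$-irresolute.
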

    \begin{proof}
     $(a) \Rightarrow (b):$
     Let $x,y \in X$ and $ x \neq y.$\\
      $\left.\begin{array}{rr}
      (x,y \in X )(x \neq y)\\
      (Y:= X)(f:=\{(x,x)|x\in X\})
       \end{array}\right\} \overset{\text{Hypothesis}}{\Rightarrow} $\\
       $\begin{array}{l}\Rightarrow (f\text{ is } w.e^*.i. \text{ surjection})(Y \text{ is } e^*\text{-} \theta \text{-} D_{1})(f(x)\neq f(y)).
       \end{array}$
       \\
       
       $(b)\Rightarrow(a):$
       Let $x,y \in X$ and $x \neq y.$\\
       $\left.\begin{array}{rr}
      (x,y \in X)(x \neq y)\\
      \text{Hypothesis}
      \end{array}\right\} \Rightarrow ( \exists f\in Y^{X} \ w.e^*.i. \text{ surjection})(Y \text{ is } e^*\text{-} \theta \text{-} D_{1})(f(x)\neq f(y))$
      \\
      $\begin{array}{l}
\end{array}
$
\\
$
\left.\begin{array}{rr}
    \Rightarrow (f\in Y^{X} \ w.e^*.i. \text{ sur.})(\exists U \in e^*\theta D (Y,f(x)))(\exists V \in e^*\theta D (Y,f(y)))(f(y)\notin U)(f(x)\notin V )\\
       \text{Theorem } \ref{2}
      \end{array}\right\} \Rightarrow $\\
      $\begin{array}{l}\Rightarrow  (y\notin f^{-1}[U] \in e^*\theta D (X, x)) (x \notin f^{-1}[V] \in e^*\theta D (X, y)).
      \end{array}$
    \end{proof}
  \section[Further properties]{Further properties}

\begin{definition}
 Let $A$ be a subset of a topological space $X.$ The $e^*\text{-}\theta$-kernel of $A,$ denoted by $e^*\text{-}ker_{\theta}(A),$ is defined to be the set $\bigcap \{U| (U\in e^*\theta O(X))(A \subseteq U) \}.$  
\end{definition}

\begin{remark}
For a subset $A$ of a topological space $X$, the sets of $e^*\text{-}ker_{\theta}(A)$ and $\beta\text{-}ker_{\theta}(A)$ need not be equal to each other as shown by the following example.
\end{remark}
\begin{example}
Let $X=\{a,b,c,d\}$ and $\tau=\{\emptyset,X, \{a\}, \{a,b\},\{a,b,c\}\}.$ Then $e^*R(X)=e^*\theta O(X)=e^*O(X)=2^X$ and $\beta R(X)=\beta \theta O(X)=\{\emptyset,X\},$ $\beta O(X)=\{\emptyset,X,\{a\},\{a,b\},\{a,c\},\{a,d\},\{a,b,c\},\{a,b,d\},\{a,c,d\}\}.$ For the subset $A=\{a,b\},$   $e^*\text{-}ker_{\theta}(A)=A\neq X=\beta\text{-}ker_{\theta}(A).$
\end{example}
\begin{definition}
 A space $X$ is called slightly $e^*\text{-} \theta \text{-}R_{0 }$ space if  $\bigcap\{e^*\text{-}cl_{\theta}(\{x\})| x\in X\}=\emptyset.$
\end{definition}
\begin{remark}
A slightly $e^*$-$\theta$-$R_0$ space need not be a slightly $\beta$-$\theta$-$R_0$ space as shown by the following example.
\end{remark}

\begin{example}
Let $X=\{a,b,c,d\}$ and $\tau=\{\emptyset,X, \{a\}, \{a,b\},\{a,b,c\}\}.$ Since $\bigcap\{e^*\text{-}cl_{\theta}(\{x\})| x\in X\}=\bigcap\{e^*\text{-}cl_{\theta}(\{a\}), e^*\text{-}cl_{\theta}(\{b\}),e^*\text{-}cl_{\theta}(\{c\}),e^*\text{-}cl_{\theta}(\{d\})\}=\emptyset,$ the space $X$ is a slightly $e^*$-$\theta$-$R_0$ space. On the other hand, since $\bigcap\{\beta\text{-}cl_{\theta}(\{x\})| x\in X\}=\bigcap\{\beta\text{-}cl_{\theta}(\{a\}), \beta\text{-}cl_{\theta}(\{b\}), \beta\text{-}cl_{\theta}(\{c\}), \beta\text{-}cl_{\theta}(\{d\})\}=\bigcap \{X\}=X\neq\emptyset,$ the space $X$ is not a slightly $\beta$-$\theta$-$R_0$ space.

\end{example}
\begin{theorem} \label{kernel}
Let $A$ be a subset of a space $X.$ Then,
$e^*\text{-}ker_{\theta}(A)=\{x\in X|e^*\text{-}cl_{\theta}(\{x\})\cap A \neq \emptyset \}.$

\end{theorem}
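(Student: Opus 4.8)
The plan is to prove the logically equivalent contrapositive biconditional, namely that $x \notin e^*\text{-}ker_{\theta}(A)$ if and only if $e^*\text{-}cl_{\theta}(\{x\}) \cap A = \emptyset$. Unwinding the definition, $x \in e^*\text{-}ker_{\theta}(A)$ means that $x$ lies in \emph{every} $e^*\text{-}\theta$-open set $U$ with $A \subseteq U$; hence $x \notin e^*\text{-}ker_{\theta}(A)$ exactly when there is \emph{some} $U \in e^*\theta O(X)$ with $A \subseteq U$ and $x \notin U$. The proof is then just the standard duality between a ``kernel'' (an intersection of open supersets) and a $\theta$-closure, and the two ingredients I would lean on are: first, that $x \in e^*\text{-}cl_{\theta}(\{x\})$ always holds (immediate from the cluster-point definition, since $x \in U \subseteq e^*\text{-}cl(U)$ for every $U \in e^*O(X,x)$), and second, that $e^*\text{-}cl_{\theta}(B)$ is $e^*\text{-}\theta$-closed for every $B \subseteq X$, which follows from the idempotency recorded in Theorem~\ref{8}.

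For the forward implication, suppose $x \notin e^*\text{-}ker_{\theta}(A)$ and pick $U \in e^*\theta O(X)$ with $A \subseteq U$ and $x \notin U$. Then $X \setminus U$ is $e^*\text{-}\theta$-closed and contains $x$, so $\{x\} \subseteq X \setminus U$. Applying monotonicity of $e^*\text{-}cl_{\theta}$ together with the fact that an $e^*\text{-}\theta$-closed set equals its own $e^*\text{-}\theta$-closure gives $e^*\text{-}cl_{\theta}(\{x\}) \subseteq e^*\text{-}cl_{\theta}(X \setminus U) = X \setminus U \subseteq X \setminus A$, whence $e^*\text{-}cl_{\theta}(\{x\}) \cap A = \emptyset$.

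For the reverse implication, suppose $e^*\text{-}cl_{\theta}(\{x\}) \cap A = \emptyset$, so that $A \subseteq X \setminus e^*\text{-}cl_{\theta}(\{x\})$. Setting $U := X \setminus e^*\text{-}cl_{\theta}(\{x\})$, Theorem~\ref{8} guarantees that $e^*\text{-}cl_{\theta}(\{x\})$ is $e^*\text{-}\theta$-closed, so $U \in e^*\theta O(X)$; moreover $A \subseteq U$, while $x \in e^*\text{-}cl_{\theta}(\{x\})$ forces $x \notin U$. Thus $U$ witnesses $x \notin e^*\text{-}ker_{\theta}(A)$, completing the equivalence. I do not expect a genuine obstacle here: the argument is essentially a complementation trick, and the only points requiring care are the two structural facts isolated above, namely $x \in e^*\text{-}cl_{\theta}(\{x\})$ and the $e^*\text{-}\theta$-closedness of $e^*\text{-}\theta$-closures, both of which are available from the preliminaries.
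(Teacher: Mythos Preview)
Your proof is correct and follows essentially the same approach as the paper: both arguments establish the contrapositive equivalence by passing to complements, using in one direction that the complement of an $e^*\text{-}\theta$-open set containing $A$ is an $e^*\text{-}\theta$-closed set containing $x$, and in the other direction that $U:=X\setminus e^*\text{-}cl_{\theta}(\{x\})$ is an $e^*\text{-}\theta$-open set containing $A$ but not $x$. Your explicit justification that $x\in e^*\text{-}cl_{\theta}(\{x\})$ and your appeal to Theorem~\ref{8} for the $e^*\text{-}\theta$-closedness of $e^*\text{-}cl_{\theta}(\{x\})$ make the write-up slightly more self-contained than the paper's version, but the underlying argument is identical.
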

\begin{proof}
 Let $x \notin e^*\text{-}ker_{\theta}(A).$
\\
$\begin{array}{rcl} x \notin e^*\text{-}ker_{\theta}(A) & \Rightarrow & x\notin \bigcap \{U|(U\in e^*\theta O(X))( A \subseteq U)\} \\
 & \Rightarrow & (\exists U \in e^*\theta O (X))(A\subseteq U)(x\notin U) \\
 & \Rightarrow & (\setminus U \in  e^*\theta C (X))(\{x\}\subseteq \setminus U)(\setminus U \subseteq \setminus A) \\
 & \Rightarrow & (\setminus U \in  e^*\theta C (X)) (e^*\text{-}cl_{\theta}(\{x\}) \subseteq e^*\text{-}cl_{\theta}(\setminus U)= \setminus U \subseteq \setminus A) \\
 & \Rightarrow & e^*\text{-}cl_{\theta}(\{x\})\cap A =\emptyset
 \\
 & \Rightarrow & x\notin \{x|e^*\text{-}cl_{\theta}(\{x\})\cap A =\emptyset\} \end{array}$
 \\
 
Then, we have $$\{x|e^*\text{-}cl_{\theta}(\{x\})\cap A =\emptyset\}\subseteq e^*\text{-}ker_{\theta}(A)\ldots (1)$$

Now, let $ x \notin \{x | e^*\text{-}cl_{\theta}(\{x\}) \cap A \neq \emptyset \}.
$
\\
$ \left.\begin{array}{rr}   
x \notin \{x | e^*\text{-}cl_{\theta}(\{x\}) \cap A \neq \emptyset \} \Rightarrow e^*\text{-}cl_{\theta}(\{x\}) \cap A = \emptyset\Rightarrow   A \subseteq \setminus e^*\text{-}cl_{\theta}(\{x\}) \\
U:= \setminus e^*\text{-}cl_{\theta}(\{x\})
\end{array}\right\} \Rightarrow 
$
\\
$
\begin{array}{l} \Rightarrow (U \in e^* \theta O(X))(A\subseteq U) (x\notin U)
\end{array} 
$
\\
$
\begin{array}{l}\Rightarrow x \notin \bigcap \{U|(U \in e^* \theta O (X))( A \subseteq U)\}=e^*\text{-}ker_{\theta}(A)
\end{array}
$
\\

Then, we have 
 $$e^*\text{-}ker_{\theta}(A)\subseteq \{x|e^*\text{-}cl_{\theta}(\{x\})\cap A =\emptyset\}\ldots (2)$$
 $(1),(2)\Rightarrow e^*\text{-}ker_{\theta}(A)= \{x|e^*\text{-}cl_{\theta}(\{x\})\cap A =\emptyset\}.$
\end{proof}
\begin{theorem}
Let $X$ be a topological space. Then, $X$ is slightly $e^*\text{-}\theta \text{-}R_{0}$ if and only if $e^*\text{-}ker_{\theta}(\{x\})\neq X$ for any $x\in X.$
\end{theorem}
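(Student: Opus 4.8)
The plan is to reduce everything to the characterization of the $e^*\text{-}\theta$-kernel of a singleton provided by Theorem \ref{kernel}, after which the statement becomes a routine manipulation of quantifiers. Applying Theorem \ref{kernel} with $A=\{x\}$ gives
$$e^*\text{-}ker_{\theta}(\{x\})=\{y\in X\mid e^*\text{-}cl_{\theta}(\{y\})\cap\{x\}\neq\emptyset\}=\{y\in X\mid x\in e^*\text{-}cl_{\theta}(\{y\})\},$$
where the second equality merely rewrites the intersection condition $e^*\text{-}cl_{\theta}(\{y\})\cap\{x\}\neq\emptyset$ as the membership $x\in e^*\text{-}cl_{\theta}(\{y\})$. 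This is the only nontrivial identity I intend to invoke, and it is already at my disposal.

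With this in hand, the central observation is that $e^*\text{-}ker_{\theta}(\{x\})=X$ holds precisely when $x\in e^*\text{-}cl_{\theta}(\{y\})$ for every $y\in X$, that is, precisely when $x\in\bigcap\{e^*\text{-}cl_{\theta}(\{y\})\mid y\in X\}$. Taking contrapositives, $e^*\text{-}ker_{\theta}(\{x\})\neq X$ if and only if $x\notin\bigcap\{e^*\text{-}cl_{\theta}(\{y\})\mid y\in X\}$. Throughout I keep the two roles of the bound variable carefully separated: $x$ is the point whose kernel is formed, while $y$ is the running index of the $e^*\text{-}\theta$-closures in the intersection defining slightly $e^*\text{-}\theta\text{-}R_0$.

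To finish, I would quantify this equivalence over all $x\in X$. For the forward direction, assuming $X$ is slightly $e^*\text{-}\theta\text{-}R_0$ means $\bigcap\{e^*\text{-}cl_{\theta}(\{y\})\mid y\in X\}=\emptyset$, so no point $x$ lies in that intersection; hence $e^*\text{-}ker_{\theta}(\{x\})\neq X$ for each $x$. For the converse, assuming $e^*\text{-}ker_{\theta}(\{x\})\neq X$ for every $x$ forces, via the displayed equivalence, that no $x$ belongs to $\bigcap\{e^*\text{-}cl_{\theta}(\{y\})\mid y\in X\}$, so this intersection is empty and $X$ is slightly $e^*\text{-}\theta\text{-}R_0$.

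I do not expect a real obstacle here, as the whole argument is a set-theoretic translation resting on Theorem \ref{kernel}; the only delicate point is bookkeeping the swap of quantifiers ("for some $x$, $e^*\text{-}ker_{\theta}(\{x\})=X$" versus "$x$ lies in every closure") so that the negations line up correctly and the clash between the two occurrences of the index variable is avoided.
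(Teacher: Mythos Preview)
Your proposal is correct. The forward direction is exactly the paper's argument: assuming some $y$ has $e^*\text{-}ker_{\theta}(\{y\})=X$, Theorem~\ref{kernel} forces $y\in e^*\text{-}cl_{\theta}(\{x\})$ for every $x$, contradicting slight $e^*\text{-}\theta\text{-}R_0$. For the converse, however, you take a more direct route than the paper. You simply reuse the pointwise equivalence $e^*\text{-}ker_{\theta}(\{x\})=X \Leftrightarrow x\in\bigcap_{y}e^*\text{-}cl_{\theta}(\{y\})$ obtained from Theorem~\ref{kernel} and quantify over $x$; the paper instead assumes some $y$ lies in the intersection, invokes Theorem~\ref{kapanis} to rewrite each $e^*\text{-}cl_{\theta}(\{x\})$ as an intersection of $e^*$-regular supersets, and argues that every $e^*$-regular set containing $y$ must be all of $X$. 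Your symmetric use of Theorem~\ref{kernel} avoids this detour through $e^*$-regular sets and is the cleaner argument.
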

 \begin{proof}
$(\Rightarrow):$  Suppose that there is a point $y$ in $X$ such that $e^*\text{-}ker_{\theta}(\{y\})=X.$
\\
  $e^*\text{-}ker_{\theta}(\{y\}) =\{x | e^*\text{-}cl_{\theta}(\{x\}) \cap \{y\}\neq \emptyset \} = X\Rightarrow (\forall x \in X)(y \in e^*\text{-}cl_{\theta}(\{x\}))$
  \\
  $ \left.\begin{array}{rr} 
\Rightarrow y \in \bigcap \{e^*\text{-}cl_{\theta}(\{x\})|x\in X\}\\
  \text{Hypothesis}
\end{array}\right\} \overset{\text{Theorem } \ref{kernel}}{\Rightarrow} y \in \bigcap \{e^*\text{-}cl_{\theta}(\{x\})|x\in X\}=\emptyset$ 
  \\
  
This is a contradiction.
\\

$(\Leftarrow):$ Suppose that $X$ is not slightly $e^*\text{-}\theta\text{-}R_{0}.$
\\
$
\begin{array}{rcl}
X \text{ is not slightly } e^*\text{-}\theta\text{-}R_{0} & \Rightarrow & \bigcap \{e^*\text{-}cl_{\theta}(\{x\})| x\in X\}\neq\emptyset
\\
& \Rightarrow & (\exists y \in X)(y \in \bigcap \{e^*\text{-}cl_{\theta}(\{x\})| x\in X\})
\\
& \Rightarrow & (\exists y \in X )(\forall x \in X)(y \in e^*\text{-}cl_{\theta}(\{x\})) 
\\
& \overset{\text{Theorem }\ref{kapanis}}{\Rightarrow} & (\forall x \in X)(y \in \bigcap \{V| (\{x\}\subseteq V)(V \in e^*R(X))\})
\\
&\Rightarrow& (\forall x \in X)(\forall V \in e^*R(X,y))(\{x\} \subseteq V)
\\
&\Rightarrow& (\forall V \in e^*R(X,y))(V = X)
\\
& \Rightarrow & e^*\text{-}cl_{\theta}(\{y\}) = X
\end{array}
$ 
\\
This is a contradiction.
\end{proof}
 \begin{theorem}
Let $X$ and $Y$ be two topological spaces. If $X$ is slightly $e^*\text{-}\theta\text{-}R_{0},$ then the product $X \times Y$ is slightly $e^*\text{-}\theta\text{-}R_{0}.$
 \end{theorem}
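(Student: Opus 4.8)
The plan is to control the $e^*$-$\theta$-closure of a singleton in the product by the corresponding closure in $X$, and then feed in the hypothesis $\bigcap_{x\in X}e^*\text{-}cl_{\theta}(\{x\})=\emptyset$. The whole argument rests on the single containment
\[
e^*\text{-}cl_{\theta}(\{(x,y)\}) \subseteq e^*\text{-}cl_{\theta}(\{x\}) \times Y, \qquad (x,y)\in X\times Y .
\]
Granting this, I would argue by contradiction: assuming $X\times Y$ is not slightly $e^*$-$\theta$-$R_{0}$, choose a point $(a,b)\in\bigcap\{e^*\text{-}cl_{\theta}(\{(x,y)\}):(x,y)\in X\times Y\}$. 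For each $x\in X$ the point $(a,b)$ lies in $e^*\text{-}cl_{\theta}(\{(x,b)\})$, hence in $e^*\text{-}cl_{\theta}(\{x\})\times Y$ by the containment, so $a\in e^*\text{-}cl_{\theta}(\{x\})$. Since $x$ is arbitrary, $a\in\bigcap_{x\in X}e^*\text{-}cl_{\theta}(\{x\})=\emptyset$, which is absurd.

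To obtain the containment I would rewrite both sides using Theorem \ref{kapanis}, namely $e^*\text{-}cl_{\theta}(\{(x,y)\})=\bigcap\{W:(x,y)\in W\in e^*R(X\times Y)\}$ and $e^*\text{-}cl_{\theta}(\{x\})\times Y=\bigcap\{U\times Y:x\in U\in e^*R(X)\}$. Thus it is enough to prove the key lemma: \emph{if $U\in e^*R(X)$, then $U\times Y\in e^*R(X\times Y)$}. Indeed, every factor $U\times Y$ appearing on the right then occurs among the sets $W$ intersected on the left and contains $(x,y)$, so the left-hand intersection (over the larger family of all such $W$) is contained in the right-hand one.

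The main obstacle is precisely this product lemma, which I would settle by the standard product identities $int(U\times Y)=int_{X}(U)\times Y$ and $cl(U\times Y)=cl_{X}(U)\times Y$ together with their $\delta$-analogues. Starting from the basic-box computation $int(cl(W_{1}\times W_{2}))=int(cl_{X}(W_{1}))\times int(cl_{Y}(W_{2}))$, I expect to derive $\delta\text{-}cl_{X\times Y}(U\times Y)=\delta\text{-}cl_{X}(U)\times Y$ and, dually, $\delta\text{-}int_{X\times Y}(U\times Y)=\delta\text{-}int_{X}(U)\times Y$, using that $Y$ is regular open in $Y$; these are the delicate verifications. With them in hand, $e^*$-openness of $U\times Y$ follows from $cl(int(\delta\text{-}cl(U\times Y)))=cl_{X}(int_{X}(\delta\text{-}cl_{X}(U)))\times Y\supseteq U\times Y$, and $e^*$-closedness from the dual inclusion $int(cl(\delta\text{-}int(U\times Y)))=int_{X}(cl_{X}(\delta\text{-}int_{X}(U)))\times Y\subseteq U\times Y$. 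Together these give $U\times Y\in e^*R(X\times Y)$, which closes the argument.
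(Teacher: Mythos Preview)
Your approach coincides with the paper's: both prove the theorem via a containment of the form $e^*\text{-}cl_{\theta}(\{(x,y)\})\subseteq e^*\text{-}cl_{\theta}(\{x\})\times(\cdot)$ and then intersect over all points to get $\emptyset$ from the hypothesis on $X$. The only cosmetic differences are that the paper uses the symmetric version $e^*\text{-}cl_{\theta}(\{(x,y)\})\subseteq e^*\text{-}cl_{\theta}(\{x\})\times e^*\text{-}cl_{\theta}(\{y\})$ and a direct intersection computation, whereas you use the one-sided bound $e^*\text{-}cl_{\theta}(\{x\})\times Y$ and phrase it as a contradiction; more substantively, you actually justify the containment through the product lemma $U\in e^*R(X)\Rightarrow U\times Y\in e^*R(X\times Y)$ (which is correct, via the identities $\delta\text{-}cl(U\times Y)=\delta\text{-}cl_{X}(U)\times Y$ and their companions), while the paper simply asserts the containment without proof.
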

 
\begin{proof}
Let $X$ be slightly $e^*$-$\theta$-$R_0.$
\\
$\bigcap \{e^*\text{-}cl_{\theta}(\{(x,y)\}) | (x,y) \in X \times Y\} \subseteq \bigcap \{e^*\text{-}cl_{\theta}(\{x \})\times e^*\text{-}cl_{\theta}(\{y \} | (x,y) \in X \times Y\} \\
  = \bigcap \{e^*\text{-}cl_{\theta}(\{x \}) | x \in X\} \times  \bigcap \{e^*\text{-}cl_{\theta}(\{y \}) | y \in Y\} = \emptyset .$
 \end{proof}
 
 \begin{definition} \label{R-con}
  A function $f:X\rightarrow Y$ is $S$-continuous (resp. $\theta\text{-} S\text{-}e^*\text{-continuous},\\ S\text{-}e^*$\text{-continuous}) if for each $x \in X$ and each $e^*$-open subset $V$ of $Y$ containing $f(x)$, there exists an open subset $U$ of $X$ containing $x$ such that $cl(f[U]) \subseteq V$ (resp. $e^*\text{-}cl_{\theta}(f[U])\subseteq V, \  e^*\text{-}cl(f[U]) \subseteq V).$ \end{definition}

\begin{definition}\label{e-open}
  A function $f: X \rightarrow Y$ is said to be $e^*\text{-}$open  \cite{Ekici2} if $f[U]$ is  $e^*$-open in $Y$ for every open set $U$ of $X.$
\end{definition}

\begin{remark}
From Definitions \ref{R-con} and \ref{e-open}, we have the following diagram.
$$\theta \text{-}S\text{-} e^* \text{-}\text{continuous} \longrightarrow S\text{-}e^*\text{-}\text{continuous}\longleftarrow S\text{-}\text{continuous}$$
\end{remark}

A function $f$ which is $S$-$e^*$-continuous need not be $S$-continuous as shown by the following example.

\begin{example}
Let $X=\{a,b,c,d\}$ and $\tau=\{\emptyset,X,\{a\},\{c\},\{a,c\},\{c,d\},\{a,c,d\}\}.$ Define the function $f:(X,\tau) \rightarrow (X,\tau)$ by $f(x)=c.$ The function $f$ is $S$-$e^*$-continuous but it is not $S$-continuous.
\end{example}

\begin{question}
Is there any $S$-$e^*$-continuous function which is not $\theta$-$S$-$e^*$-continuous?
\end{question}
\begin{theorem}
Let $X$ and $Y$ be two topological spaces. If $f :X \rightarrow Y$ is $S\text{-}e^*\text{-continuous}$ and $e^* \text{-open},$ then $f$ is $\theta$-$S$-$e^*$-continuous.
\end{theorem}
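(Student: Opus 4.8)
The plan is to unwind the three relevant definitions and exploit the $e^*$-openness of $f$ to convert the $e^*$-closure appearing in $S$-$e^*$-continuity into the $e^*\text{-}\theta$-closure required by $\theta$-$S$-$e^*$-continuity, the bridge being Theorem \ref{1}(a). The crucial observation is that the \emph{same} witnessing open set $U$ will serve both continuity conditions; no new set needs to be constructed.

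First I would fix an arbitrary point $x \in X$ and an arbitrary $V \in e^*O(Y, f(x))$; the goal is to produce an open $U \ni x$ with $e^*\text{-}cl_{\theta}(f[U]) \subseteq V$. Since $f$ is $S\text{-}e^*$-continuous, there already exists an open set $U$ containing $x$ such that $e^*\text{-}cl(f[U]) \subseteq V$, and I would retain this very $U$.

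The key step is then the following: because $U$ is open and $f$ is $e^*$-open (Definition \ref{e-open}), the image $f[U]$ is $e^*$-open in $Y$, that is, $f[U] \in e^*O(Y)$. Applying Theorem \ref{1}(a) in the space $Y$ to the $e^*$-open set $f[U]$ gives the equality $e^*\text{-}cl(f[U]) = e^*\text{-}cl_{\theta}(f[U])$. Combining this with the inclusion furnished by $S\text{-}e^*$-continuity yields $e^*\text{-}cl_{\theta}(f[U]) = e^*\text{-}cl(f[U]) \subseteq V$, which is precisely the defining condition of $\theta\text{-}S\text{-}e^*$-continuity at $x$ relative to $V$. As $x$ and $V$ were arbitrary, $f$ is $\theta\text{-}S\text{-}e^*$-continuous.

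I expect the only genuine subtlety to be the recognition that $e^*$-openness of $f$ is exactly the hypothesis that places $f[U]$ into the class $e^*O(Y)$ on which the $e^*$-closure and the $e^*\text{-}\theta$-closure agree; once this identification is made, the argument is a direct substitution and requires no further computation.
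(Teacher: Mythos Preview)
Your proof is correct and matches the paper's argument essentially line for line: both fix $x$ and $V$, invoke $S\text{-}e^*$-continuity to obtain an open $U$ with $e^*\text{-}cl(f[U])\subseteq V$, use $e^*$-openness of $f$ to ensure $f[U]\in e^*O(Y)$, and then apply Theorem~\ref{1}(a) to replace $e^*\text{-}cl$ by $e^*\text{-}cl_{\theta}$.
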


\begin{proof}
 Let $x\in X$ and $V \in e^*O(Y,f(x))$.\\
$
\left.\begin{array}{r}(x \in X )(V \in e^*O(Y,f(x))) \\ f \text{ is} \ S \text{-} e^* \text{-} \text{continuous} \end{array} \right\}\Rightarrow \begin{array}{c} \\ \left. \begin{array}{r} \!\!\!\!\! (\exists U \in O(X,x))(e^*\text{-}cl(f[U])\subseteq V)\\ f \text{ is} \ e^*\text{-}\text{open} \end{array} \right\} \Rightarrow 
\end{array}
\\
\begin{array}{l}
\overset{\text{Theorem }\ref{1} (a)}{\Rightarrow} (\exists U \in O(X,x))(e^*\text{-}cl_{\theta}(f[U]) = e^*\text{-}cl(f[U]) \subseteq V). 
    
\end{array}$
\end{proof}  

\begin{definition}
  The graph $G(f)$ of a function $f :X \rightarrow Y$ is said to be strongly  $e^*\text{-}\theta$-closed if for each point $(x,y)\in (X \times Y)\setminus G(f),$ there exist subsets $U\in e^*O(X,x)$ and $V\in e^*\theta O(Y,y)$ such that $(e^*\text{-}cl(U) \times V) \cap G(f) = \emptyset.$
\end{definition}
\begin{lemma}
The graph  G(f) of $f :X\rightarrow Y$ is strongly $e^*\text{-}\theta$-closed in $X \times Y$ if and only if for each point $(x,y)\in(X \times Y)\setminus G(f),$ there exists $U \in e^*O(X,x)$ and $V \in e^*\theta O(Y,y)$ such that $f[e^*\text{-}cl(U)] \cap V = \emptyset.$
\end{lemma}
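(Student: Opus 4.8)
The plan is to prove the stated biconditional by establishing, for a fixed pair $(x,y)$ and fixed witnesses $U\in e^*O(X,x)$ and $V\in e^*\theta O(Y,y),$ the purely set-theoretic equivalence
$$(e^*\text{-}cl(U)\times V)\cap G(f)=\emptyset \iff f[e^*\text{-}cl(U)]\cap V=\emptyset.$$
Once this is in hand, both directions of the lemma follow immediately, because the \emph{same} sets $U$ and $V$ serve as witnesses on both sides; the outer quantifier ``for each $(x,y)\in(X\times Y)\setminus G(f)$ there exist $U,V$'' is therefore preserved without any modification of the witnesses.

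To verify the displayed equivalence, I would unwind the definition of the graph $G(f)=\{(a,f(a))\mid a\in X\}.$ A point lies in $(e^*\text{-}cl(U)\times V)\cap G(f)$ exactly when it has the form $(a,f(a))$ with $a\in e^*\text{-}cl(U)$ and $f(a)\in V.$ Hence this intersection is empty if and only if there is no $a\in e^*\text{-}cl(U)$ with $f(a)\in V,$ which is precisely the assertion that the image $f[e^*\text{-}cl(U)]$ misses $V,$ i.e.\ $f[e^*\text{-}cl(U)]\cap V=\emptyset.$ Each step in this chain is reversible, so the equivalence holds verbatim.

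With the equivalence established, the proof concludes by quantifying over the complement of the graph. For the forward direction, assume $G(f)$ is strongly $e^*\text{-}\theta$-closed; given $(x,y)\in(X\times Y)\setminus G(f),$ choose the guaranteed $U\in e^*O(X,x)$ and $V\in e^*\theta O(Y,y)$ with $(e^*\text{-}cl(U)\times V)\cap G(f)=\emptyset,$ and the equivalence converts this into $f[e^*\text{-}cl(U)]\cap V=\emptyset.$ The converse is identical, reading the equivalence from right to left. I do not anticipate any genuine obstacle: the entire content is a direct translation between the product-set formulation of the definition and the image formulation in the statement, and no topological input beyond the definition of $G(f)$ and of the image of a set under $f$ is required.
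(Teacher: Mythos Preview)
Your proposal is correct and follows essentially the same approach as the paper: both arguments unwind the definition of $G(f)$ to translate between the product-set condition $(e^*\text{-}cl(U)\times V)\cap G(f)=\emptyset$ and the image condition $f[e^*\text{-}cl(U)]\cap V=\emptyset.$ If anything, your treatment is slightly more complete, since you isolate the pointwise set-theoretic equivalence and explicitly note its reversibility, whereas the paper only writes out the forward implication.
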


\begin{proof}
 Let $(x,y) \notin G(f) . $ 
 \\
 $
 \left.\begin{array}{rr}
 (x,y) \notin G(f) \\
 G(f) \text{ is strongly } e^* \text{-} \theta \text{-} \text{closed}
 \end{array}\right\} \Rightarrow 
 $
 \\ 
 $
 \begin{array}{l} \Rightarrow (\exists U \in e^*O(X,x)) (\exists V \in e^*\theta O(Y,y))((e^*\text{-}cl(U)\times V) \cap G(f) = \emptyset)
 \end{array}
 $ 
 \\
 $
 \begin{array}{l}\Rightarrow (\exists U \in e^*O(X,x)) (\exists V \in e^*\theta O(Y,y))(\forall x \in X)((x,f(x)) \notin e^*\text{-}cl(U)\times V)
 \end{array}
 $
 \\
 $
 \begin{array}{l}
 \Rightarrow (\exists U \in e^*O(X,x)) (\exists V \in e^*\theta O(Y,y))(f[e^*\text{-}cl(U)]\cap V=\emptyset).
 \end{array}
 $
 \end{proof}

\begin{definition}
A space  $X$ is called to be $e^*\text{-}T_{1}$  \cite{Ekicibul} if for each pair of distinct points in $X,$ there exist  $ e^* \text{-}$open sets $U$ and $V$ containing $x$ and $y,$ respectively, such that $y\notin U$ and $x \notin V.$ 
\end{definition}

\begin{theorem}
Let $X$ and $Y$ be two topological spaces. If  $f:X \rightarrow Y$ is $\theta\text{-}S \text{-}e^*$-continuous weak $e^*$-irresolute and $Y$ is $e^*\text{-}T_{1},$ then $G(f)$ is strongly $e^*\text{-}\theta$-closed.
\end{theorem}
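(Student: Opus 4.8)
The plan is to use the characterization of a strongly $e^*\text{-}\theta$-closed graph furnished by the preceding Lemma: it suffices to show that for each $(x,y)\in(X\times Y)\setminus G(f)$ there exist $U\in e^*O(X,x)$ and $V\in e^*\theta O(Y,y)$ with $f[e^*\text{-}cl(U)]\cap V=\emptyset$. So I would fix such a pair $(x,y)$; since $(x,y)\notin G(f)$, we have $y\neq f(x)$.

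First I would separate $f(x)$ from $y$ inside $Y$. Because $Y$ is $e^*\text{-}T_{1}$ and $f(x)\neq y$, there is an $e^*$-open set $G$ with $f(x)\in G$ and $y\notin G$. Feeding this $e^*$-open neighbourhood $G$ of $f(x)$ into the $\theta\text{-}S\text{-}e^*$-continuity of $f$ at $x$, I obtain an open set $U$ of $X$ with $x\in U$ and $e^*\text{-}cl_{\theta}(f[U])\subseteq G$. Since every open set is $e^*$-open, $U\in e^*O(X,x)$, which is exactly the kind of set demanded by the Lemma.

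Next I would control the image of the $e^*$-closure of $U$ by invoking weak $e^*$-irresoluteness. By Theorem \ref{3}$(b)$, $f[e^*\text{-}cl(U)]\subseteq e^*\text{-}cl_{\theta}(f[U])$, and combining this with the previous inclusion gives $f[e^*\text{-}cl(U)]\subseteq e^*\text{-}cl_{\theta}(f[U])\subseteq G$. The decisive choice is then $V:=Y\setminus e^*\text{-}cl_{\theta}(f[U])$. By the idempotency of $e^*\text{-}cl_{\theta}$ (Theorem \ref{8}), the set $e^*\text{-}cl_{\theta}(f[U])$ is $e^*\text{-}\theta$-closed, so $V$ is $e^*\text{-}\theta$-open; moreover $y\notin G\supseteq e^*\text{-}cl_{\theta}(f[U])$ forces $y\in V$, whence $V\in e^*\theta O(Y,y)$. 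Finally $f[e^*\text{-}cl(U)]\cap V\subseteq e^*\text{-}cl_{\theta}(f[U])\cap(Y\setminus e^*\text{-}cl_{\theta}(f[U]))=\emptyset$, which is precisely what the Lemma requires, so $G(f)$ is strongly $e^*\text{-}\theta$-closed.

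The routine part is the bookkeeping of inclusions; the one place that needs care — and where all three hypotheses genuinely enter — is the choice of $V$. The role of $e^*\text{-}T_{1}$ is only to produce an $e^*$-open $G$ separating $f(x)$ from $y$, so that the $\theta\text{-}S\text{-}e^*$-continuity conclusion $e^*\text{-}cl_{\theta}(f[U])\subseteq G$ keeps $y$ outside $e^*\text{-}cl_{\theta}(f[U])$; weak $e^*$-irresoluteness is what bridges from $f[e^*\text{-}cl(U)]$ (the quantity appearing in the Lemma) to $e^*\text{-}cl_{\theta}(f[U])$. If any one of the three is dropped, the set $V=Y\setminus e^*\text{-}cl_{\theta}(f[U])$ either fails to contain $y$ or fails to be disjoint from $f[e^*\text{-}cl(U)]$, so the main obstacle is recognizing that this single complement simultaneously secures membership of $y$ and disjointness from the graph image.
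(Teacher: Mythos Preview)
Your proof is correct and follows essentially the same route as the paper: separate $f(x)$ from $y$ via $e^*\text{-}T_1$, apply $\theta\text{-}S\text{-}e^*$-continuity to get an open $U$ with $e^*\text{-}cl_{\theta}(f[U])$ missing $y$, then take $V=Y\setminus e^*\text{-}cl_{\theta}(f[U])$ and use weak $e^*$-irresoluteness (Theorem~\ref{3}(b)) to push $f[e^*\text{-}cl(U)]$ inside $e^*\text{-}cl_{\theta}(f[U])$. The only cosmetic difference is that you invoke the Lemma's characterization while the paper verifies the definition directly, and you are slightly more explicit about why $V$ is $e^*\text{-}\theta$-open.
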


\begin{proof}
Let $(x,y) \notin G(f).$
\\
$
\left.\begin{array}{rr}
(x,y) \notin G(f) \Rightarrow (y,f(x) \in Y)(y \neq f(x)) \\
Y \text{ is } e^*\text{-}T_{1}
\end{array}\right\} \Rightarrow 
$
\\
$
\left.\begin{array}{r} \Rightarrow (\exists V \in e^*O(Y, f(x))(y\notin V)) \\ f \text{ is } \theta \text{-} S \text{-} e^* \text{-} \text{continuous} \end{array} \right\}\Rightarrow \begin{array}{c} \\ \left. \begin{array}{r} \!\!\!\!\! (\exists U \in O(X,x))(y \notin e^*\text{-}cl_{\theta}(f[U]))\\ f \text{ is weak } e^*\text{-}\text{irresolute} \end{array} \right\} \Rightarrow 
\end{array}
$
\\
$
\begin{array}{l} \Rightarrow (U \in e^*O(X,x))( \setminus e^*\text{-}cl_{\theta}(f[U]) \in e^*\theta O(X,y))(e^*\text{-}cl(U)\times (\setminus e^*\text{-}cl_{\theta}(f[U])) \cap G(f) = \emptyset).
\end{array}
$
\end{proof}

\begin{theorem}
Let  $f:X \rightarrow Y$ be a weak $e^*\text{-}$irresolute function. Then, $f$ is $\theta \text{-} S \text{-} e^*$-continuous if and only if for each $x \in X $ and each $e^* \text{-} closed $ subset $F$ of $Y$ with $f(x) \notin F, $ there exists an open subset $U$ of $X$ containing $x$ and an $e^*\text{-}\theta$-open subset $V$ of $Y$ with $F \subseteq  V$ such that $f[e^*\text{-}cl(U)] \cap V = \emptyset .$
\end{theorem}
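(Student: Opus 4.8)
The statement is a biconditional, so the plan is to establish the two implications separately; in both directions the engine is the passage between an $e^*$-open set and its ($e^*$-closed) complement, together with the inclusion $f[e^*\text{-}cl(U)] \subseteq e^*\text{-}cl_{\theta}(f[U])$ supplied by Theorem \ref{3}$(b)$ for the weakly $e^*$-irresolute map $f$.

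For the necessity direction, I would suppose $f$ is $\theta$-$S$-$e^*$-continuous and fix $x \in X$ together with an $e^*$-closed set $F \subseteq Y$ satisfying $f(x) \notin F$. First I would observe that $Y \setminus F$ is an $e^*$-open set containing $f(x)$, so $\theta$-$S$-$e^*$-continuity furnishes an open set $U$ with $x \in U$ and $e^*\text{-}cl_{\theta}(f[U]) \subseteq Y \setminus F$. The set to exhibit is $V := Y \setminus e^*\text{-}cl_{\theta}(f[U])$, and I would verify the three required properties in turn: $V$ is $e^*\text{-}\theta$-open because $e^*\text{-}cl_{\theta}(f[U])$ is $e^*\text{-}\theta$-closed by idempotency of $e^*\text{-}cl_{\theta}$ (Theorem \ref{8}); the inclusion $e^*\text{-}cl_{\theta}(f[U]) \subseteq Y \setminus F$ rearranges to $F \subseteq V$; and since $f$ is weakly $e^*$-irresolute, Theorem \ref{3}$(b)$ gives $f[e^*\text{-}cl(U)] \subseteq e^*\text{-}cl_{\theta}(f[U])$, which is exactly the complement of $V$, whence $f[e^*\text{-}cl(U)] \cap V = \emptyset$.

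For the sufficiency direction, I would assume the stated condition and fix $x \in X$ with $V \in e^*O(Y, f(x))$. Here I would dualize by setting $F := Y \setminus V$, which is $e^*$-closed and omits $f(x)$, and then apply the hypothesis to obtain an open $U \ni x$ and an $e^*\text{-}\theta$-open $W \supseteq F$ with $f[e^*\text{-}cl(U)] \cap W = \emptyset$. From $U \subseteq e^*\text{-}cl(U)$ I get $f[U] \subseteq f[e^*\text{-}cl(U)] \subseteq Y \setminus W$. The key point is that $Y \setminus W$ is $e^*\text{-}\theta$-closed (as $W$ is $e^*\text{-}\theta$-open), so monotonicity of $e^*\text{-}cl_{\theta}$ together with $e^*\text{-}cl_{\theta}(Y \setminus W) = Y \setminus W$ preserves the inclusion to give $e^*\text{-}cl_{\theta}(f[U]) \subseteq Y \setminus W$. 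Finally $F \subseteq W$ yields $Y \setminus W \subseteq V$, and chaining the two inclusions produces $e^*\text{-}cl_{\theta}(f[U]) \subseteq V$, which is the defining inequality of $\theta$-$S$-$e^*$-continuity.

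The routine part is only complementation bookkeeping; the one genuine ingredient is the comparison between $f[e^*\text{-}cl(U)]$ and $e^*\text{-}cl_{\theta}(f[U])$. I expect the main obstacle to be invoking weak $e^*$-irresolution on the correct side: it is needed in the necessity half (via Theorem \ref{3}$(b)$) to convert the condition on $f[e^*\text{-}cl(U)]$ into one on $e^*\text{-}cl_{\theta}(f[U])$, whereas the sufficiency half runs purely on the fact that the complement of an $e^*\text{-}\theta$-open set is $e^*\text{-}\theta$-closed. Keeping track of which sets are $e^*\text{-}\theta$-closed, so that $e^*\text{-}cl_{\theta}$ can be applied without enlarging them, is where care is required.
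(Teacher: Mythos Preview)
Your proposal is correct and matches the paper's proof essentially step for step: in the forward direction you pass to the complement $Y\setminus F$, apply $\theta$-$S$-$e^*$-continuity, set $V:=Y\setminus e^*\text{-}cl_{\theta}(f[U])$, and use Theorem~\ref{3}(b) to get $f[e^*\text{-}cl(U)]\subseteq e^*\text{-}cl_{\theta}(f[U])$; in the backward direction you dualize to $F:=Y\setminus V$, obtain $W$, and close up using that $Y\setminus W$ is $e^*\text{-}\theta$-closed. Your observation that weak $e^*$-irresoluteness is only invoked in the necessity half is also consistent with the paper's argument.
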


\begin{proof}
 $(\Rightarrow): $ Let $x \in X,$ $F \in e^*C(Y)$ and $f(x) \notin F.$\\
 $\left.\begin{array}{rr}
 (x \in X) (F \in e^*C(Y)) (f(x) \notin F) \Rightarrow Y \setminus F \in e^*O(Y,f(x))\\
 f \text{ is } \theta  \text{-} S  \text{-} e^* \text{-continuous} 
 \end{array}\right\} \Rightarrow  $\\
 $\left.\begin{array}{rr}
\Rightarrow  (\exists U \in O(X,x))(e^*\text{-}cl_{\theta}(f[U]) \subseteq Y \setminus F)\\
 f \text{ is weak } e^*\text{-irresolute} 
  \end{array}\right\} \Rightarrow  $
  \\
   $\left.\begin{array}{rr}
   \Rightarrow (U \in O(X,x))(f[e^*\text{-}cl(U)] \subseteq e^*\text{-}cl_{\theta}(f[U]) \subseteq Y \setminus F)\\
   V: = Y\setminus e^*\text{-}cl_{\theta}(f[U])
   \end{array}\right\} \Rightarrow  $ \\
   $\begin{array}{l}\Rightarrow (U \in O(X,x))(V \in e^*\theta O(Y))(F \subseteq V \subseteq Y \setminus f[e^*\text{-}cl(U)])
   \end{array}\\
  \begin{array}{l} \Rightarrow (U \in O(X,x))(V \in e^*\theta O(Y,F))(f[e^*\text{-}cl(U])\cap V = \emptyset .
  \end{array}$
   \\
   
   $(\Leftarrow): $ 
   Let $x \in X $ and $V \in e^*O(Y,f(x)).$\\
    $\left.\begin{array}{rr}
    (x \in X) (V \in e^*O(Y,f(x))) \Rightarrow f(x) \notin Y \setminus V \in e^*C(Y)\\
    \text{Hypothesis}
    \end{array}\right\} \Rightarrow  $\\
    $\begin{array}{l}\Rightarrow (\exists U \in O (X,x))(\exists W \in e^*\theta O(Y, Y\setminus V))(f[e^*\text{-}cl(U)]\cap W = \emptyset) 
    \end{array}\\
    \begin{array}{l}\Rightarrow (U\in O(X,x))(W \in e^*\theta O(Y, Y\setminus V))(f[U] \subseteq f[e^*\text{-}cl(U)]\subseteq Y \setminus W \subseteq V)
    \end{array}\\
    \begin{array}{l}\Rightarrow (U\in O(X,x))(e^*\text{-}cl_{\theta}(f[U])\subseteq e^*\text{-}cl_{\theta}(Y\setminus W) = Y \setminus W \subseteq V).
    \end{array}$
\end{proof}

\begin{corollary}
Let $X$ and $Y$ be two topological spaces and $f:X\rightarrow Y$ be a weak $e^*\text{-}$irresolute function. Then, f is $\theta\text{-}S\text{-}e^*\text{-}continuous $ if and only if for each $x \in X$ and each $e^*\text{-}open $ subset $V$ of $Y$ containing $f(x)$, there exists an open subset $U$ of $X$ containing $x$ such that $e^*\text{-}cl_{\theta}(f[e^*\text{-}cl(U)]) \subseteq V.$
\end{corollary}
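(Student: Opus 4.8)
The plan is to prove both implications directly from the definition of $\theta\text{-}S\text{-}e^*$-continuity, relying only on monotonicity of the operator $e^*\text{-}cl_{\theta}$ together with the two cited facts that weak $e^*$-irresoluteness gives $f[e^*\text{-}cl(A)]\subseteq e^*\text{-}cl_{\theta}(f[A])$ for every $A$ (Theorem \ref{3}) and that $e^*\text{-}cl_{\theta}$ is idempotent (Theorem \ref{8}). The only difference between the definition and the stated condition is that the image $f[U]$ is replaced by the larger image $f[e^*\text{-}cl(U)]$; the substance of the corollary is that, under weak $e^*$-irresoluteness, enlarging $U$ to $e^*\text{-}cl(U)$ inside the $e^*\text{-}\theta$-closure costs nothing.

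For the easy direction $(\Leftarrow)$, I would fix $x\in X$ and $V\in e^*O(Y,f(x))$ and take the open set $U\ni x$ supplied by the hypothesis, so that $e^*\text{-}cl_{\theta}(f[e^*\text{-}cl(U)])\subseteq V$. Since $U\subseteq e^*\text{-}cl(U)$ yields $f[U]\subseteq f[e^*\text{-}cl(U)]$, monotonicity of $e^*\text{-}cl_{\theta}$ (immediate from its defining description) gives $e^*\text{-}cl_{\theta}(f[U])\subseteq e^*\text{-}cl_{\theta}(f[e^*\text{-}cl(U)])\subseteq V$, which is exactly the defining condition for $\theta\text{-}S\text{-}e^*$-continuity. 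I note that this direction does not even use the weak $e^*$-irresoluteness assumption.

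For the forward direction $(\Rightarrow)$, I would again fix $x\in X$ and $V\in e^*O(Y,f(x))$ and use $\theta\text{-}S\text{-}e^*$-continuity to obtain $U\in O(X,x)$ with $e^*\text{-}cl_{\theta}(f[U])\subseteq V$. The key step is to upgrade $f[U]$ to $f[e^*\text{-}cl(U)]$: by weak $e^*$-irresoluteness (Theorem \ref{3}) we have $f[e^*\text{-}cl(U)]\subseteq e^*\text{-}cl_{\theta}(f[U])$, so applying $e^*\text{-}cl_{\theta}$ to both sides and invoking idempotency (Theorem \ref{8}) yields
$$e^*\text{-}cl_{\theta}(f[e^*\text{-}cl(U)])\subseteq e^*\text{-}cl_{\theta}(e^*\text{-}cl_{\theta}(f[U]))=e^*\text{-}cl_{\theta}(f[U])\subseteq V,$$
which is the required inclusion for this same $U$.

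The argument is short and I do not anticipate a genuine obstacle; the only thing to get right is the order of the two tools in the forward direction, namely that one must first push $e^*\text{-}cl(U)$ inside the image via weak $e^*$-irresoluteness and only afterwards collapse the nested $e^*\text{-}\theta$-closures by idempotency. As an alternative consistent with the \emph{Corollary} label, the statement can instead be read off the immediately preceding theorem by taking $F:=Y\setminus V$ in the forward direction and $W:=Y\setminus e^*\text{-}cl_{\theta}(f[e^*\text{-}cl(U)])$ in the converse, but I would prefer the direct computation above as it is cleaner and makes the role of each hypothesis transparent.
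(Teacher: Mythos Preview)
Your proposal is correct and matches the paper's proof essentially line for line: both directions proceed directly from the definition, the forward direction uses Theorem~\ref{3} to obtain $f[e^*\text{-}cl(U)]\subseteq e^*\text{-}cl_{\theta}(f[U])$ and then Theorem~\ref{8} (idempotency) to collapse the nested closures, while the converse uses only monotonicity of $e^*\text{-}cl_{\theta}$ together with $U\subseteq e^*\text{-}cl(U)$. Your observation that the $(\Leftarrow)$ direction does not require weak $e^*$-irresoluteness is also accurate.
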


\begin{proof}
 $(\Rightarrow): $
 Let $x \in X$ and $V\in e^*O(Y,f(x)).$ \\
$
\left.\begin{array}{r}  (x \in X) (V\in e^*O(Y,f(x)))\\ \text{Hypothesis}   \end{array} \right\}\Rightarrow \begin{array}{c} \\ \left. \begin{array}{r} \!\!\!\!\!\!  (\exists U \in O(X,x))(e^*\text{-}cl_{\theta}(f[U]) \subseteq V)   \\
 f \text{ is weak } e^*\text{-} \text{irresolute}   \end{array} \right\} \overset{\text{Theorem }\ref{3}} {\Rightarrow} \end{array}
$
\\
$
\begin{array}{l}\Rightarrow(\exists U \in O(X,x))(f[e^*\text{-}cl(U)]\subseteq e^*\text{-}cl_{\theta}(f[U])\subseteq V)
\end{array}
$
\\
$
\begin{array}{l}\Rightarrow(U \in O(X,x))(e^*\text{-}cl_{\theta}(f[e^*\text{-}cl(U)])\subseteq e^*\text{-}cl_{\theta}(e^*\text{-}cl_{\theta}(f[U])) = e^*\text{-}cl_{\theta}(f[U]) \subseteq V).
\end{array}
$
\\

  $(\Leftarrow): $ 
  Let $x \in X$ and $V\in e^*O(Y,f(x)).$\\
   $\left.\begin{array}{rr}
    (x \in X) (V\in e^*O(Y,f(x)))\\
  \text{Hypothesis}
   \end{array}\right\} \Rightarrow $\\
   $ \begin{array}{l}\Rightarrow (\exists U \in O(X,x))(e^*\text{-}cl_{\theta}(f[U])\subseteq e^*\text{-}cl_{\theta}(f[e^*\text{-}cl(U)])\subseteq V).
   \end{array}$
\end{proof}
Now, we discuss some fundamental properties of $\theta\text{-}S\text{-}e^*\text{-}$continuous functions related to composition and restriction.

\begin{theorem}
Let $f:X\to Y$ and $g:Y\to Z$ be two functions. If $f$ is continuous and $g$ is $\theta\text{-}S\text{-}e^*$-continuous, then $g\circ f:X\rightarrow Z$ is $\theta\text{-}S\text{-}e^*$-continuous. 
\end{theorem}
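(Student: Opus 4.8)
The plan is to unwind the two definitions and chain them together at a single point. Fix an arbitrary $x \in X$ together with an arbitrary $e^*$-open set $V$ of $Z$ that contains $(g \circ f)(x) = g(f(x))$. The goal will be to produce an open set $U$ of $X$ with $x \in U$ satisfying $e^*\text{-}cl_{\theta}((g \circ f)[U]) \subseteq V$, which is exactly what Definition \ref{R-con} demands for $g \circ f$ to be $\theta\text{-}S\text{-}e^*$-continuous.

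First I would apply the hypothesis on $g$. Since $g$ is $\theta\text{-}S\text{-}e^*$-continuous, $f(x) \in Y$, and $V \in e^*O(Z, g(f(x)))$, there exists an open set $W$ of $Y$ with $f(x) \in W$ and $e^*\text{-}cl_{\theta}(g[W]) \subseteq V$. Next I would invoke the continuity of $f$: because $f$ is continuous and $W$ is an open neighbourhood of $f(x)$, there is an open set $U$ of $X$ with $x \in U$ and $f[U] \subseteq W$.

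The final step is a monotonicity argument. From $f[U] \subseteq W$ one gets $(g \circ f)[U] = g[f[U]] \subseteq g[W]$, and since the operator $e^*\text{-}cl_{\theta}$ is order-preserving with respect to set inclusion, it follows that $e^*\text{-}cl_{\theta}((g \circ f)[U]) \subseteq e^*\text{-}cl_{\theta}(g[W]) \subseteq V$. This yields the required open set $U$ and completes the verification.

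There is no real obstacle here; the argument is a routine composition of neighbourhood conditions, and the only property beyond the definitions is the monotonicity of $e^*\text{-}cl_{\theta}$, which is immediate from its description as an intersection of $e^*$-regular (equivalently $e^*\text{-}\theta$-closed) supersets in Theorem \ref{kapanis}. The one point worth stating explicitly is that ordinary continuity of $f$ (not weak $e^*$-irresoluteness) is exactly what matches the open set $W$ returned by the $\theta\text{-}S\text{-}e^*$-continuity of $g$, so the two hypotheses dovetail without needing any intermediate closure estimates.
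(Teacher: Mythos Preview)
Your proof is correct and follows essentially the same route as the paper's: apply $\theta\text{-}S\text{-}e^*$-continuity of $g$ at $f(x)$ to obtain an open neighbourhood in $Y$, pull it back via continuity of $f$, and conclude using monotonicity of $e^*\text{-}cl_{\theta}$. The only difference is notational (your $V,W$ are the paper's $W,V$), and you spell out the monotonicity step a bit more explicitly than the paper does.
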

\begin{proof}
Let $x\in X $ and $W \in e^*O(Z,g(f(x))).$\\
$
\left.\begin{array}{r} W \in e^*O(Z,g(f(x)))\\ g\text{ is } \theta\text{-}S\text{-}e^*\text{-} \text{continuous}  \end{array} \right\}\Rightarrow \begin{array}{c} \\ \left. \begin{array}{r} \!\!\!\!\!\!  (\exists V \in O(Y,f(x))(e^*\text{-}cl_{\theta}(g[V])\subseteq W)   \\
 f \text{ is continuous}   \end{array} \right\} \Rightarrow \end{array}
$\\
$\begin{array}{l}\Rightarrow(\exists U \in O(X,x))(e^*\text{-}cl_{\theta}(g(f[U])) \subseteq e^*\text{-}cl_{\theta}(g[V])\subseteq W).
\end{array}$
\end{proof}
\begin{theorem}
 Let $f :X \rightarrow Y$ and $g :Y\rightarrow Z$ be two functions. If  $g\circ f$ is $\theta\text{-}S\text{-}e^*\text{-}continuous $ and $f$  is an open surjection, then $g$ is $\theta\text{-}S\text{-}e^*\text{-}continuous .$
\end{theorem}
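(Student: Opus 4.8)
The plan is to verify the defining condition of $\theta\text{-}S\text{-}e^*$-continuity for $g$ directly from Definition \ref{R-con}, exploiting surjectivity of $f$ to lift a point of $Y$ to a point of $X$ where the hypothesis on $g\circ f$ can be applied, and exploiting openness of $f$ to transport the open neighborhood furnished by that hypothesis back down into $Y$.

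First I would fix an arbitrary $y\in Y$ together with an arbitrary $e^*$-open subset $W$ of $Z$ containing $g(y)$. Since $f$ is surjective, there exists $x\in X$ with $f(x)=y$, so that $(g\circ f)(x)=g(f(x))=g(y)\in W$. Now I would invoke the assumption that $g\circ f$ is $\theta\text{-}S\text{-}e^*$-continuous, applied to the point $x$ and the $e^*$-open set $W$: this yields an open subset $U$ of $X$ containing $x$ such that $e^*\text{-}cl_{\theta}((g\circ f)[U])\subseteq W$.

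Next I would set $V:=f[U]$. Because $f$ is open, $V$ is an open subset of $Y$, and because $x\in U$ we have $y=f(x)\in V$, so $V$ is an open neighborhood of $y$. The decisive (but purely formal) identity is $g[V]=g[f[U]]=(g\circ f)[U]$, whence $e^*\text{-}cl_{\theta}(g[V])=e^*\text{-}cl_{\theta}((g\circ f)[U])\subseteq W$. This is precisely the condition required by Definition \ref{R-con} for $g$ to be $\theta\text{-}S\text{-}e^*$-continuous at $y$; since $y$ and $W$ were arbitrary, $g$ is $\theta\text{-}S\text{-}e^*$-continuous, as desired.

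I do not expect any genuine obstacle in this argument: both hypotheses enter in an essential yet routine fashion, surjectivity being used only to produce the base point $x$ so that the hypothesis on the composite can be triggered, and openness being used only to certify that $f[U]$ is a legitimate open neighborhood of $y$. The sole point demanding a moment's care is the image bookkeeping $g[f[U]]=(g\circ f)[U]$, which is immediate from the definition of composition.
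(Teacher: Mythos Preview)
Your proof is correct and follows essentially the same approach as the paper: lift $y$ to a preimage $x$ via surjectivity, apply the $\theta\text{-}S\text{-}e^*$-continuity of $g\circ f$ at $x$ to obtain an open $U\ni x$ with $e^*\text{-}cl_{\theta}((g\circ f)[U])\subseteq W$, and then set $V:=f[U]$, using openness of $f$ to conclude. The paper's argument is identical in structure and detail.
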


\begin{proof}
Let $y\in Y $ and $W \in e^*O(Z,g(y)).$\\
$
\left.\begin{array}{r} (y\in Y ) (W \in e^*O(Z,g(y)))\\ f \text{ is  surjective}   \end{array} \right\}\Rightarrow \begin{array}{c} \\ \left. \begin{array}{r} \!\!\!\!\!\!  (\exists x\in X)(y = f(x))(W\in e^*O(Z,g(f(x))))   \\g\circ f\text{ is } \theta\text{-}S\text{-}e^*\text{-continuous} \end{array} \right\} \Rightarrow \end{array}
$\\
$\left.\begin{array}{rr}
\Rightarrow(\exists U\in O(X,x))(e^*\text{-}cl_{\theta}(g(f[U])) \subseteq W)\\
f \text{ is open}
\end{array}\right\} \Rightarrow $\\
$\left.\begin{array}{rr}\Rightarrow (f[U]\in O(Y,y))(e^*\text{-}cl_{\theta}(g[f[U]])\subseteq W)\\ V:= f[U]
\end{array}\right\}\Rightarrow$
\\
$\begin{array}{l}\Rightarrow (V\in O(Y,y))(e^*\text{-}cl_{\theta}(g[V])\subseteq W).
\end{array}$
\end{proof}

\begin{theorem}
Let $f :X \rightarrow Y$ be a function and $A\subseteq X.$  If $f$ is  $\theta \text{-}S\text{-}e^*$-continuous,  then $f|_{A}:A\rightarrow Y $ is $\theta\text{-}S\text{-}e^*\text{-}$continuous.
\end{theorem}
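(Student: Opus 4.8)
The plan is to unwind the definition of $\theta\text{-}S\text{-}e^*$-continuity (Definition \ref{R-con}) for the restriction $f|_{A}$ and reduce each required instance to the corresponding property already guaranteed for $f$ on $X$. The essential observation is that the codomain $Y$, and hence the operator $e^*\text{-}cl_{\theta}$ appearing in the definition, is unchanged when we pass from $f$ to $f|_{A}$; the only adjustment concerns the domain, where the subspace topology on $A$ enters.

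First I would fix an arbitrary point $a\in A$ and an arbitrary $e^*\text{-}$open subset $V$ of $Y$ with $(f|_{A})(a)\in V$. Since $a\in A\subseteq X$ and $(f|_{A})(a)=f(a)$, we may regard $a$ as a point of $X$ and $V$ as an element of $e^*O(Y,f(a))$. Applying the hypothesis that $f$ is $\theta\text{-}S\text{-}e^*$-continuous at $a$, there exists an open subset $U$ of $X$ such that $a\in U$ and $e^*\text{-}cl_{\theta}(f[U])\subseteq V$.

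Next I set $W:=U\cap A$, the natural candidate for an open neighbourhood of $a$ in the subspace topology of $A$. By the definition of the subspace topology, $W$ is open in $A$ and $a\in W$. It remains to control $(f|_{A})[W]$: since $W=U\cap A\subseteq U$, we have $(f|_{A})[W]=f[U\cap A]\subseteq f[U]$, and by the monotonicity of the $e^*\text{-}cl_{\theta}$ operator this gives $e^*\text{-}cl_{\theta}((f|_{A})[W])\subseteq e^*\text{-}cl_{\theta}(f[U])\subseteq V$, which is precisely the condition required for $f|_{A}$ to be $\theta\text{-}S\text{-}e^*$-continuous at $a$.

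The argument presents no genuine obstacle; the one point deserving care is that the $e^*\text{-}\theta$-closure is always evaluated in the fixed codomain $Y$, so it does not interact with the restriction of the domain. The monotonicity of $e^*\text{-}cl_{\theta}$ used in the final inclusion is immediate from its description as an intersection of $e^*\text{-}$regular supersets in Theorem \ref{kapanis}. Since $a\in A$ and $V\in e^*O(Y,f(a))$ were arbitrary, this verifies the defining condition for $f|_{A}$ and completes the proof.
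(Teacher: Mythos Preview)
Your proof is correct and follows essentially the same approach as the paper: both arguments pick a point of $A$ and an $e^*$-open neighbourhood of its image, invoke $\theta\text{-}S\text{-}e^*$-continuity of $f$ on $X$ to obtain an open neighbourhood in $X$, intersect it with $A$ to get an open set in the subspace, and then use monotonicity of $e^*\text{-}cl_{\theta}$ together with $f|_{A}[U\cap A]\subseteq f[U]$ to conclude. The only cosmetic difference is the naming of the sets (your $U,W$ correspond to the paper's $W,U$).
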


\begin{proof}
Let $x\in A $ and $V\in e^*O(Y,f(x)).$\\
$
\left.\begin{array}{r} (x\in A)(V\in e^*O(Y,f(x)))\\ A\subseteq X  \end{array} \right\}\Rightarrow \begin{array}{c} \\ \left. \begin{array}{r} \!\!\!\!\!\!  (x\in X)(V\in e^*O(Y,f(x)))   \\
 f \text{ is } \theta\text{-}S\text{-}e^*\text{-}\text{continuous}  \end{array} \right\} \Rightarrow \end{array}
$\\
$\left.\begin{array}{rr}
\Rightarrow(\exists W \in O(X,x))(e^*\text{-}cl_{\theta}(f[W])\subseteq V)\\
U:= W\cap A
\end{array}\right\} \Rightarrow$\\
$\begin{array}{l}\Rightarrow(U\in O(A,x))(e^*\text{-}cl_{\theta}(f|_{A}[U])=e^*\text{-}cl_{\theta}(f[W\cap A])\subseteq e^*\text{-}cl_{\theta}(f[W])\subseteq V).
\end{array}$
\end{proof}

 \section[$e^*\text{-}R_1$ space]{$e^*\text{-}R_1$ space}
\begin{definition}
A topological space $X$ is said to be $e^*\text{-}R_{1}$ if for all $x,y\in X $ with $e^*\text{-}cl(\{x\}) \neq e^*\text{-}cl(\{y\}), $ there exist disjoint $e^*$-open sets $U$ and $V$ such that $e^*\text{-}cl(\{x\}) \subseteq U $ and $e^*\text{-}cl(\{y\}) \subseteq V.$
\end{definition}
\begin{remark}
An $e^*$-$R_1$ space need not be a $\beta$-$R_1$ space as shown by the following example.
\end{remark}

\begin{example}
Let $X=\{a,b,c,d\}$ and $\tau=\{\emptyset,X,\{a\},\{c\},\{a,c\},\{c,d\},\{a,c,d\}\}.$ Then, the space $X$ is an $e^*$-$R_1$ space but it is not $\beta$-$R_1.$
\end{example}

\begin{theorem}
Let $X$ be a topological space. Then, $X$ is $e^*\text{-}R_{1}$ if and only if $e^*\text{-}cl_{\theta}(\{x\}) =e^*\text{-}cl(\{x\}) $ for all $x\in X.$
\end{theorem}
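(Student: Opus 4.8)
The plan is to prove both implications by translating between membership in the $e^*$-closure and the $e^*\text{-}\theta$-closure of singletons, after first recording the inclusion that always holds. By Theorem \ref{kapanis}, $e^*\text{-}cl_{\theta}(\{x\})$ is the intersection of all $e^*$-regular sets containing $x$; since every $e^*$-regular set is in particular $e^*$-closed, this intersection runs over a subfamily of the sets defining $e^*\text{-}cl(\{x\})$ and is therefore larger, giving $e^*\text{-}cl(\{x\}) \subseteq e^*\text{-}cl_{\theta}(\{x\})$ for every $x$. Thus the asserted equality reduces to the reverse inclusion, and the whole argument amounts to extracting that reverse inclusion from the $e^*\text{-}R_1$ hypothesis and, conversely, manufacturing separating sets from it.

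For the forward direction I assume $X$ is $e^*\text{-}R_1$, fix $x$, take $y \notin e^*\text{-}cl(\{x\})$, and aim to produce an $e^*$-open neighbourhood of $y$ whose $e^*$-closure misses $x$. The first observation is that $y \in e^*\text{-}cl(\{y\})$ together with $y \notin e^*\text{-}cl(\{x\})$ forces $e^*\text{-}cl(\{x\}) \neq e^*\text{-}cl(\{y\})$, so the $e^*\text{-}R_1$ axiom supplies disjoint $e^*$-open sets $U,V$ with $e^*\text{-}cl(\{x\}) \subseteq U$ and $e^*\text{-}cl(\{y\}) \subseteq V$. Since $U$ is $e^*$-open and $U \cap V = \emptyset$, I have $V \subseteq X \setminus U$ with $X \setminus U$ $e^*$-closed, whence $e^*\text{-}cl(V) \subseteq X \setminus U$ and so $x \notin e^*\text{-}cl(V)$. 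As $V \in e^*O(X,y)$, the definition of the $e^*\text{-}\theta$-closure yields $y \notin e^*\text{-}cl_{\theta}(\{x\})$, which is exactly $e^*\text{-}cl_{\theta}(\{x\}) \subseteq e^*\text{-}cl(\{x\})$.

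For the converse I assume $e^*\text{-}cl_{\theta}(\{x\}) = e^*\text{-}cl(\{x\})$ for all $x$ and take $x,y$ with $e^*\text{-}cl(\{x\}) \neq e^*\text{-}cl(\{y\})$. Using monotonicity and idempotency of the $e^*$-closure operator, I first reduce to the case $y \notin e^*\text{-}cl(\{x\})$: were both $x \in e^*\text{-}cl(\{y\})$ and $y \in e^*\text{-}cl(\{x\})$ to hold, each singleton closure would contain the other and the two closures would coincide, contradicting the hypothesis. The hypothesis then upgrades $y \notin e^*\text{-}cl(\{x\})$ to $y \notin e^*\text{-}cl_{\theta}(\{x\})$, so there is $W \in e^*O(X,y)$ with $x \notin e^*\text{-}cl(W)$. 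I set $V := e^*\text{-}cl(W)$, which is $e^*$-regular because the $e^*$-closure of an $e^*$-open set is $e^*$-regular (the basic property recalled at the start of the preliminaries), and $U := X \setminus V$, which is again $e^*$-regular as the complement of an $e^*$-regular set. These are disjoint $e^*$-open sets; as $V$ is $e^*$-closed and contains $y$, we get $e^*\text{-}cl(\{y\}) \subseteq V$, and as $U$ is $e^*$-closed and contains $x$ (because $x \notin e^*\text{-}cl(W) = V$), we get $e^*\text{-}cl(\{x\}) \subseteq U$, establishing $e^*\text{-}R_1$.

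The step deserving the most care is the converse: upgrading the \emph{pointwise} datum $y \notin e^*\text{-}cl_{\theta}(\{x\})$ into disjoint $e^*$-open sets that contain the \emph{entire} closures $e^*\text{-}cl(\{x\})$ and $e^*\text{-}cl(\{y\})$, rather than merely the points $x$ and $y$. The device that resolves this is the $e^*$-regularity of $e^*\text{-}cl(W)$, which lets me take the separators to be $V = e^*\text{-}cl(W)$ and its complement $U$, both simultaneously $e^*$-open and $e^*$-closed; being $e^*$-closed, each automatically absorbs the $e^*$-closure of whichever point it contains. The only other subtlety is justifying the ``without loss of generality'' reduction, which rests precisely on the elementary monotonicity and idempotency of $e^*\text{-}cl$.
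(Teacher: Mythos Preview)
Your proof is correct and follows essentially the same strategy as the paper: in the forward direction both arguments observe $e^*\text{-}cl(\{x\})\neq e^*\text{-}cl(\{y\})$, invoke $e^*\text{-}R_1$ to get disjoint $e^*$-open $U,V$, and use $e^*\text{-}cl(V)\subseteq X\setminus U$ to conclude $y\notin e^*\text{-}cl_\theta(\{x\})$; in the converse both produce a complementary pair of $e^*$-regular sets as the separators. The only cosmetic difference is that the paper selects a witness $z\in e^*\text{-}cl(\{x\})\setminus e^*\text{-}cl(\{y\})$ and pulls an $e^*$-regular neighbourhood of $z$ from the characterization of $e^*\text{-}cl_\theta$, while you reduce by symmetry to $y\notin e^*\text{-}cl(\{x\})$ and take $V=e^*\text{-}cl(W)$ directly.
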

\begin{proof}
$ (\Rightarrow): $
 Let $x\in X.$\\
 $x\in X\Rightarrow e^*\text{-}cl(\{x\}) \subseteq e^*\text{-}cl_{\theta}(\{x\}) \ldots (1)$

Now, let $ y \notin e^*\text{-}cl(\{x\}).$
 \\
$
\left.\begin{array}{rr}
y \notin e^*\text{-}cl(\{x\}) \Rightarrow e^*\text{-}cl(\{x\}) \neq e^*\text{-}cl(\{y\})\\
X \text{ is } e^*\text{-}R_{1}
\end{array}\right\} \Rightarrow  
$
\\
$
\begin{array}{l}\Rightarrow (\exists U,V \in e^*O(X))(U \cap V = \emptyset)(e^*\text{-}cl(\{x\})\subseteq U)(e^*\text{-}cl(\{y\})\subseteq V)
\end{array}
$
\\
$
\begin{array}{rcl}\Rightarrow (U\in e^*O(X,x))(V\in e^*O(X,y))(e^*\text{-}cl(\{x\}) \cap e^*\text{-}cl(\{y\}) & \subseteq & e^*\text{-}cl(\{x\})\cap V \\ & \subseteq & e^*\text{-}cl(\{x\})\cap e^*\text{-}cl(V) \\ & \subseteq & e^*\text{-}cl(U) \cap e^*\text{-}cl(V) = \emptyset)
\end{array}
$
\\
$
\begin{array}{l}\Rightarrow (V \in e^*O(X,y))(\{x\}\cap e^*\text{-}cl(V) \subseteq e^*\text{-}cl(\{x\}) \cap e^*\text{-}cl(V) = \emptyset)
\end{array}
$
\\
$
\begin{array}{l}  \Rightarrow y\notin e^*\text{-}cl_{\theta}(\{x\})
\end{array}
$
\\

Then, we have $e^*\text{-}cl_{\theta}(\{x\}) \subseteq e^*\text{-}cl(\{x\}) \ldots (2)$
\\

$(1),(2) \Rightarrow e^*\text{-}cl(\{x\}) = e^*\text{-}cl_{\theta}(\{x\}).
$
\\

$(\Leftarrow):$
Let $x,y\in X$ and $e^*\text{-}cl(\{x\}) \neq e^*\text{-}cl(\{y\}).$
\\
$\begin{array}{rr}
e^*\text{-}cl(\{x\}) \neq e^*\text{-}cl(\{y\}) \Rightarrow (\exists z\in X )(z \in e^*\text{-}cl(\{x\})\setminus e^*\text{-}cl(\{y\})\vee z \in e^*\text{-}cl(\{y\})\setminus e^*\text{-}cl(\{x\})) \end{array} $
\\

\textit{First Case}: Let $z \in e^*\text{-}cl(\{x\})\setminus e^*\text{-}cl(\{y\}).$
\\
$\left.\begin{array}{rr}
z \in e^*\text{-}cl(\{x\})\setminus e^*\text{-}cl(\{y\}) \Rightarrow (z \in e^*\text{-}cl(\{x\}))(z \notin e^*\text{-}cl(\{y\})) \\
\text{Hypothesis}
\end{array}\right\} \Rightarrow $
\\
$\begin{array}{l}\Rightarrow (z \in e^*\text{-}cl(\{x\})= e^*\text{-}cl_{\theta}(\{x\})) (z \notin e^*\text{-}cl(\{y\})= e^*\text{-}cl_{\theta}(\{y\})) \end{array}\\
\begin{array}{l}\Rightarrow (\forall W \in e^*R(X,z))(W \cap \{x\} \neq \emptyset) (\exists U \in e^*R(X,z))(U \cap \{y\}= \emptyset)
\end{array}
$
\\
$
\left.\begin{array}{rr}
\Rightarrow (U \in e^*R(X,z))(\{x\} \subseteq U)(\{y\} \subseteq \setminus U)\\
V:=\setminus U
\end{array}\right\} \Rightarrow  
$
\\
$
\begin{array}{l}\Rightarrow (U,V \in e^*O(X,z))(U \cap V = \emptyset)(e^*\text{-}cl(\{x\})\subseteq U) (e^*\text{-}cl(\{y\})\subseteq V).
  \end{array}$
\\

  \textit{Second Case}: Similarly proved.
\end{proof}

\begin{theorem}
Let $X$ be a topological space. Then, $X$ is $e^*\text{-}R_{1}$ if and only if for each $e^*\text{-}open$ set $A$ and each $x\in A$, $ e^*\text{-}cl_{\theta}(\{x\}) \subseteq A.$   
\end{theorem}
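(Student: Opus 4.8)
The plan is to lean on the preceding theorem, which already characterises the $e^*\text{-}R_1$ property as the identity $e^*\text{-}cl_\theta(\{x\})=e^*\text{-}cl(\{x\})$ for every $x\in X$, and on the point-symmetry recorded in Theorem \ref{4}$(a)$, namely that $z\in e^*\text{-}cl_\theta(\{x\})$ forces $x\in e^*\text{-}cl_\theta(\{z\})$. Together these two facts let me convert membership in a $\theta$-closure into membership in an ordinary $e^*$-closure, whose complement is $e^*$-open and can be tested against the condition in the statement. So in both directions I never argue about the separating $e^*$-open sets of the definition directly; I route everything through the closure identity.

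For the forward implication I would assume $X$ is $e^*\text{-}R_1$, fix an $e^*$-open set $A$ and a point $x\in A$, and take an arbitrary $z\in e^*\text{-}cl_\theta(\{x\})$, aiming to show $z\in A$. Applying Theorem \ref{4}$(a)$ gives $x\in e^*\text{-}cl_\theta(\{z\})$, and the preceding theorem replaces the $\theta$-closure by $e^*\text{-}cl(\{z\})$, so $x\in e^*\text{-}cl(\{z\})$. If $z$ were to lie outside $A$, then $z\in X\setminus A$, an $e^*$-closed set, whence $e^*\text{-}cl(\{z\})\subseteq X\setminus A$; but this would place $x$ in $X\setminus A$, contradicting $x\in A$. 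Hence $z\in A$, which gives $e^*\text{-}cl_\theta(\{x\})\subseteq A$.

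For the converse I would verify the criterion of the preceding theorem. The inclusion $e^*\text{-}cl(\{x\})\subseteq e^*\text{-}cl_\theta(\{x\})$ always holds (it is used in the proof of that theorem), so only the reverse inclusion needs work. Given $z\in e^*\text{-}cl_\theta(\{x\})$, Theorem \ref{4}$(a)$ yields $x\in e^*\text{-}cl_\theta(\{z\})$. If $z\notin e^*\text{-}cl(\{x\})$, then $A:=X\setminus e^*\text{-}cl(\{x\})$ is an $e^*$-open set containing $z$, so the hypothesis forces $e^*\text{-}cl_\theta(\{z\})\subseteq A$; since $x\in e^*\text{-}cl(\{x\})$ we have $x\notin A$, contradicting $x\in e^*\text{-}cl_\theta(\{z\})$. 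Thus $z\in e^*\text{-}cl(\{x\})$, giving the equality $e^*\text{-}cl_\theta(\{x\})=e^*\text{-}cl(\{x\})$ and, via the preceding theorem, the $e^*\text{-}R_1$ property.

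The routine inputs are the general inclusion $e^*\text{-}cl(\{x\})\subseteq e^*\text{-}cl_\theta(\{x\})$ and the fact that complements of $e^*$-open sets are $e^*$-closed together with monotonicity of $e^*\text{-}cl$. The one genuinely load-bearing idea, and the step I expect to be the crux, is invoking the symmetry of Theorem \ref{4}$(a)$ to pass from \emph{``$z$ is an $e^*\text{-}\theta$-cluster point of $x$''} to \emph{``$x$ is an $e^*\text{-}\theta$-cluster point of $z$''}; without this pivot one cannot set up the $e^*$-closed complement that drives both contradictions.
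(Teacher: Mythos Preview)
Your proof is correct and follows essentially the same route as the paper: both directions hinge on the preceding characterisation $e^*\text{-}cl_\theta(\{x\})=e^*\text{-}cl(\{x\})$ together with the symmetry of Theorem~\ref{4}$(a)$, and the converse arguments are virtually identical. The only cosmetic difference is in the forward implication, where the paper, after obtaining $x\notin e^*\text{-}cl_\theta(\{y\})$, re-derives the symmetry step by hand (building an $e^*$-regular neighbourhood of $x$ and passing to its complement) rather than invoking Theorem~\ref{4}$(a)$; your direct citation of that theorem is the cleaner choice.
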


\begin{proof}
$(\Rightarrow):$ 
Let $ A\in e^*O(X,x)$ and $y\notin A .$\\
$\left.\begin{array}{rr}
y\notin A\in e^*O(X,x)\\
X\text{ is } e^*\text{-}R_{1}
\end{array}\right\} \Rightarrow  x\notin e^*\text{-}cl_{\theta}(\{y\}) = e^*\text{-}cl(\{y\}) \subseteq X\setminus A  $
\\
$\begin{array}{rr}
\Rightarrow (\exists V \in e^*O(X,x))(e^*\text{-}cl(V)\cap \{y\} = \emptyset)
\end{array}  $
\\
$\left.\begin{array}{rr}
\Rightarrow (e^*\text{-}cl(V)\in e^*R(X,x))(e^*\text{-}cl(V)\cap \{y\} = \emptyset)\\
U:=\setminus e^*\text{-}cl(V)
\end{array}\right\} \Rightarrow $
\\
$\begin{array}{l}\Rightarrow (U \in e^*R(X,y)\subseteq e^*O(X,y))(e^*\text{-}cl(U)\cap \{x\} =\emptyset)
\end{array}$
\\
$\begin{array}{l} \Rightarrow y\notin e^*\text{-}cl_{\theta}(\{x\}).
\end{array}$
\\

$(\Leftarrow): $
Let $ x,y\in X $ and $y\in e^*\text{-}cl_{\theta}(\{x\}) \setminus e^*\text{-}cl(\{x\}).$
\\
$ y\in e^*\text{-}cl_{\theta}(\{x\}) \setminus e^*\text{-}cl(\{x\}) \Rightarrow(y\in e^*\text{-}cl_{\theta}(\{x\}))(y\notin e^*\text{-}cl(\{x\}))$
\\
$\left.\begin{array}{rr}
\Rightarrow (y\in e^*\text{-}cl_{\theta}(\{x\}))(\exists A\in e^*O(X,y))(A\cap \{x\} = \emptyset)\\
\text{Hypothesis}
\end{array}\right\} \Rightarrow  $\\
$\begin{array}{l}\Rightarrow (y\in e^*\text{-}cl_{\theta}(\{x\}))(e^*\text{-}cl_{\theta}(\{y\})\cap \{x\} =\emptyset)
\end{array}\\
\begin{array}{l}\Rightarrow (y\in e^*\text{-}cl_{\theta}(\{x\})(x\notin e^*\text{-}cl_{\theta}(\{y\}))
\end{array}\\
\begin{array}{l} \overset{\text{Theorem } \ref{4}}\Rightarrow(y\in e^*\text{-}cl_{\theta}(\{x\})(y\notin e^*\text{-}cl_{\theta}(\{x\}))
\end{array}\\
\begin{array}{l}\Rightarrow y\in e^*\text{-}cl_{\theta}(\{x\})\setminus e^*\text{-}cl_{\theta}(\{x\}) = \emptyset.
\end{array}$ 
\\

This is a contradiction.
\end{proof}

\begin{theorem}
Let $X$ and $Y$ be two topological spaces. If $f :X \rightarrow Y$  is a $\theta\text{-}S\text{-}e^*\text{-}continuous $ surjection, then  $Y$ is an $e^*\text{-}R_{1}$ space.
\end{theorem}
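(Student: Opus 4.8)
The plan is to verify $Y$ is $e^*$-$R_1$ by means of the characterization established just above, namely that a space is $e^*$-$R_1$ if and only if for every $e^*$-open set $A$ and every point in $A$, the $e^*$-$\theta$-closure of that singleton is contained in $A$. Checking this condition for $Y$ is easier than working directly from the definition of $e^*$-$R_1$, since it avoids having to produce the separating disjoint $e^*$-open sets by hand.

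First I would fix an arbitrary $e^*$-open set $A\subseteq Y$ together with a point $y\in A$, and aim to show $e^*\text{-}cl_{\theta}(\{y\})\subseteq A$. Because $f$ is surjective, I can select $x\in X$ with $f(x)=y$; then $A$ is an $e^*$-open set of $Y$ containing $f(x)$. Applying the hypothesis that $f$ is $\theta\text{-}S\text{-}e^*$-continuous at $x$ (Definition \ref{R-con}), I obtain an open set $U\subseteq X$ with $x\in U$ and $e^*\text{-}cl_{\theta}(f[U])\subseteq A$.

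The final step is the observation that $y=f(x)\in f[U]$, so $\{y\}\subseteq f[U]$, and hence by the monotonicity of the $e^*$-$\theta$-closure operator,
$$e^*\text{-}cl_{\theta}(\{y\})\subseteq e^*\text{-}cl_{\theta}(f[U])\subseteq A.$$
Since $A$ and $y\in A$ were arbitrary, the characterization of $e^*$-$R_1$ spaces applies and $Y$ is $e^*$-$R_1$.

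There is essentially no technical obstacle here; the argument collapses once the right characterization is invoked. The single point deserving care is the surjectivity hypothesis: it is precisely what allows me to pull an arbitrary $y\in A\subseteq Y$ back to a domain point $x$ at which the continuity condition can be invoked. Without surjectivity one could not guarantee that each $y\in Y$ equals some $f(x)$, and the pullback of the continuity estimate to the singleton $\{y\}$ would fail.
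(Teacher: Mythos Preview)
Your proposal is correct and follows essentially the same approach as the paper's own proof: both invoke the characterization that $Y$ is $e^*\text{-}R_1$ iff $e^*\text{-}cl_\theta(\{y\})\subseteq A$ for every $e^*$-open $A$ and $y\in A$, then use surjectivity to write $y=f(x)$, apply $\theta\text{-}S\text{-}e^*$-continuity at $x$ to obtain $U$ with $e^*\text{-}cl_\theta(f[U])\subseteq A$, and conclude via monotonicity from $\{y\}\subseteq f[U]$. The only difference is that you spell out the monotonicity step and the role of surjectivity more explicitly than the paper does.
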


\begin{proof}
Let $V\in e^*O(Y,y).$
\\
$
\left.\begin{array}{r}  V\in e^*O(Y,y)\\ f  \text{ is surjective} \end{array} \right\}\Rightarrow \begin{array}{c} \\ \left. \begin{array}{r} \!\!\!\!\!\!  (\exists x \in X)(y= f(x))(V\in e^*O(Y,f(x)))\\
  f \text{ is }\theta\text{-}S\text{-}e^*\text{-}\text{continuous}    \end{array} \right\} \Rightarrow \end{array}
$
\\
$
\begin{array}{l}\Rightarrow (\exists U \in O(X,x))(e^*\text{-}cl_{\theta}(\{y\}) \subseteq e^*\text{-}cl_{\theta}(f[U])\subseteq V).
\end{array}
$
\end{proof}

\section{Acknowledgements}
This study has been supported by the Scientific Research Project Fund of Muğla Sıtkı Koçman University under the project number 23/153/02/1.


\end{document}